\newcommand{\oeisseqnum}[1]{\href{https://oeis.org/#1}{\rm \underline{#1}}}
\renewcommand\ge\geqslant
\renewcommand\le\leqslant
\newcommand{\Hermiteprob}{\mathop{{}\it He}\nolimits}
\newtheorem{theorem}{Theorem}[section]
\newtheorem{corollary}[theorem]{Corollary}
\newtheorem{proposition}[theorem]{Proposition}
\newtheorem{lemma}[theorem]{Lemma}
\newtheorem{conjecture}[theorem]{Conjecture}
\theoremstyle{definition}
\theoremstyle{remark}
\newtheorem{remarkaftertheorem}{Remark}[theorem]
\newcommand{\adag}{{a^\dag}}
\newcommand{\defeq}{\mathrel{:=}}
\newcommand{\eqdef}{\mathrel{=:}}
\newcommand\stircyc[2]{\genfrac{[}{]}{0pt}{}{#1}{#2}}
\newcommand\stirsub[2]{\genfrac{\{}{\}}{0pt}{}{#1}{#2}}
\newcommand\eulerian[2]{\genfrac\langle\rangle{0pt}{}{#1}{#2}}
\newenvironment{smallarray}[1]
 {\null\,\vcenter\bgroup\scriptsize
  \arraycolsep=.13885em
  \hbox\bgroup$\array{@{}#1@{}}}
 {\endarray$\egroup\egroup\,\null}
\begin{document}


\begin{frontmatter}
\title{Boson operator ordering identities from generalized Stirling and Eulerian numbers}
\author{Robert S. Maier}
\ead{rsm@math.arizona.edu}
\address{Depts.\ of Mathematics and Physics, University of Arizona, Tucson,
AZ 85721, USA}

\begin{abstract}
Ordering identities in the Weyl--Heisenberg algebra generated by
single-mode boson operators are investigated.  A~boson string composed
of creation and annihilation operators can be expanded as a linear
combination of other such strings, the simplest example being a normal
ordering.  The case when each string contains only one annihilation
operator is already combinatorially nontrivial.  Two kinds of
expansion are derived: (i)~that of a power of a string~$\Omega$ in
lower powers of another string~$\Omega'$, and (ii)~that of a power
of~$\Omega$ in twisted versions of the same power of~$\Omega'$.  The
expansion coefficients are shown to be, respectively, generalized
Stirling numbers of Hsu and Shiue, and certain generalized Eulerian
numbers.  Many examples are given.  These combinatorial numbers are
binomial transforms of each other, and their theory is developed,
emphasizing schemes for computing them: summation formulas,
Graham--Knuth--Patashnik (GKP) triangular recurrences, terminating
hypergeometric series, and closed-form expressions.  The results on
the first type of expansion subsume a number of previous results on
the normal ordering of boson strings.
\end{abstract}
\end{frontmatter}

\begin{keyword}
Boson operator \sep Weyl--Heisenberg algebra \sep ordering identity \sep generalized Stirling numbers \sep generalized Eulerian numbers \sep triangular recurrence \sep binomial transform \sep hypergeometric series
\MSC[2020] 11B73 \sep 11B37 \sep 05A10 \sep 05A15 \sep 81R05
\end{keyword}

\section{Introduction}
\label{sec:intro}

Single-mode boson creation and annihilation operators $a^\dag,a$ do
not commute, but satisfy the commutation relation $[a,a^\dag]=1$.
Over the complex numbers, they generate the Weyl--Heisenberg
algebra~$W_{\mathbb{C}}$.  Similarly noncommuting pairs of operators
appear in many places in quantum mechanics and quantum field theory,
and many examples could be adduced~\cite{Fleury94}.  For instance, the
position operator~$x$ and the operator $D=D_x={\rm d}/{\rm d}x$
satisfy the equivalent relation $[D,x]=1$.  They too generate a
$\mathbb{C}$\nobreakdash-algebra, but negative integer powers of~$x$
and indeed arbitrary powers of~$x$ will be allowed here, so the
algebra is a larger one,~$\tilde W_{\mathbb{C}}$.  The algebra
$W_{\mathbb{C}}$~has a representation on Fock space, and $\tilde
W_{\mathbb{C}}$~can be represented on any convenient space of
functions of~$x$, but this paper will deal with their algebraic
properties rather than their representations.

Reordering the operators in any algebra element built from~$a^\dag,a$
(or~$x,D$), to satisfy constraints, may be combinatorially nontrivial.
A~standard problem is that of normal ordering: moving all
operators~$a$ (or~$D$) to the right, to facilitate the computing of
quantum expectation values.  It has long been known~\cite{Katriel74}
that the normal ordering of $(a^\dag a)^n$ and $(a \adag)^n$ involves
Stirling numbers of the second kind~\cite{Graham94}, which are denoted
by $S(n,k)$ or~$\stirsub{n}k$.  In present notation,
\begin{equation}
\label{eq:katriel}
(\adag a)^n = \sum_{k=0}^n S_{n,k}(0,1;0)\, \adag^k (a)^k,  
\qquad 
(a \adag)^n = \sum_{k=0}^n S_{n,k}(0,1;1)\, \adag^k (a)^k,
\end{equation}
where the coefficients $S_{n,k}(0,1;0)$ and $S_{n,k}(0,1;1)$
respectively equal $\stirsub{n}k$ and $\stirsub{n+1}{k+1}$.  These are
\emph{ordering identities}, and they have been greatly generalized
\cite{Mansour2016,Schork2021}.  For example, if the word raised to the
$n$'th power is an arbitrary word $\Omega=\Omega(a^\dag,a)$ in the
noncommuting~$a^\dag,a$, or a linear combination, it~may be possible
to compute the coefficients in the normal ordering of~$\Omega^n$
iteratively or recursively, or view them as counting combinatorial
structures~\cite{Blasiak2012}.

This paper derives ordering identities in the Weyl--Heisenberg algebra
of two new kinds, not previously considered in detail.  The derivation
is largely algebraic rather than combinatorial.  In the first, one
expands the $n$'th power of a word $\Omega=\Omega(a^\dag, a)$ in
powers $\Omega{}^\prime{}^k$, $k=0,\dots,n$, of another specified
word~$\Omega'$.  For simplicity, each of $\Omega,\Omega'$ will be
required to contain exactly one annihilation operator~$a$, so $\Omega
= \adag^L a\adag^R$ and $\Omega' = \adag^{L'} a\adag^{R'}$, or
correspondingly, $\Omega = x^L D x^R$ and $\Omega'= x^{L'} D x^{R'}$.
The first theorem dealing with this situation contains the following
parametric operator identity in~$\tilde W_{\mathbb{C}}$ (see
Theorem~\ref{thm:firstmain}):
\begin{equation}
\begin{aligned}
  &\left(x^{1-L-R}\right)^n \left(x^L D x^R\right)^n\\
  &\qquad {}= \sum_{k=0}^n S_{n,k}\left(1-L-R,1-L'-R';\,R-R'\right)
  \left(x^{1-L'-R'}\right)^k \left(x^{L'}Dx^{R'}\right)^k.
\label{eq:firstmain}
\end{aligned}
\end{equation}
Here, $L,R,L',R'\in\mathbb{C}$ are arbitrary, and the coefficients
$S_{n,k}$, $0\le k\le n$, with three complex parameters, are
``unified'' generalized Stirling numbers of Hsu and
Shiue~\cite{Hsu98}.  They are one among many generalizations of the
classical Stirling numbers; for others, see
\cite{Mansour2016,Sandor2004}.

One can formally substitute $\adag,a$ for~$x,D$
in~(\ref{eq:firstmain}), but for the result to be an identity in
$W_{\mathbb{C}}$, it must contain only nonnegative integer powers of
$\adag$ and~$a$.  So one must require $L,R,L',R'\in\mathbb{N}$, and to
remove negative powers, left-multiply both sides
of~(\ref{eq:firstmain}) by~$(x^E)^n$, where $E\in\mathbb{N}$~is
sufficiently large.  With $x,D$ replaced by~$a^\dag,a$, this yields a
parametric identity in~$W_{\mathbb{C}}$:
\begin{equation}
\begin{aligned}
  \left({a^\dag}^{1-L-R+E}\right)^n \left({a^\dag}^L a {a^\dag}^R\right)^n
  =& \sum_{k=0}^n S_{n,k}\left(1-L-R,1-L'-R';\,R-R'\right)\\
   &\quad{}\times
  \left({a^\dag}^E\right)^{n-k}   \left({a^\dag}^{1-L'-R'+E}\right)^k \left({a^\dag}^{L'}a{a^\dag}^{R'}\right)^k,
\label{eq:firstmainmod}
\end{aligned}
\end{equation}
where taking $E=\max\left(L+R-1,L'+R'-1,0\right)$ suffices (see
Theorem~\ref{thm:powerful}).

The normal orderings in~(\ref{eq:katriel}) are the
$(L,R;L',R')=(1,0;0,0)$ and $(0,1;0,0)$ cases
of~(\ref{eq:firstmainmod}), with $E=0$, and several other known
identities \cite{Kim2022,Lang2000,Mikhailov85,MohammadNoori2011} are
also subsumed by~(\ref{eq:firstmainmod}).  This includes the normal
ordering of any monomial $({a^\dag}^L a{a^\dag}^R)^n$, which comes by
taking $(L',R')=(0,0)$ and $\Omega'=\nobreak a$.  It is known that
normally ordering~$\Omega^n$ when $\Omega$~is a ``single annihilator''
word yields generalizations of the classical Stirling
numbers~\cite{DuChamp2004,DuChamp2010}, and the case when
$\Omega^n$~is replaced by a product $\adag^{r_m}a^{s_m}\dots
\adag^{r_1}a^{s_1}$, which includes $({a^\dag}^L a {a^\dag}^R)^n$, has
been treated~\cite{Mendez2005}. But the
expansion~(\ref{eq:firstmainmod}) and the extension to words
$\Omega'=\adag^{L'} a \adag^{R'}$ other than~$a$ are new.

It is explained at length below how the Hsu--Shiue Stirling numbers
$S_{n,k}$ appearing as coefficients can be computed: by summation
formulas, triangular recurrences, or identities relating them at
different indices~$n,k$ and different values of their parameters.
Closed-form expressions at certain parameter values are also derived.
A~sample application of eq.~(\ref{eq:firstmainmod}) is
\begin{equation}
\label{eq:sampleappl}
\begin{aligned}
  \left( {a^\dag}^3 a\right)^n  &=  {a^\dag}^n\,\sum_{k=0}^n
  S_{n,k}(-2,-1;0)\:
{a^\dag}^{(n-k)} \left({a^\dag}^2 a\right)^k\\
&=  {a^\dag}^n \, \sum_{k=0}^n 2^{-(n-k)}k\,\binom{n}{k} \frac{(2n-k-1)!}{n!}\:  {a^\dag}^{(n-k)} \left({a^\dag}^2 a\right)^k, \qquad n\ge1,
\end{aligned}
\end{equation}
which does not normally-order $\Omega^n=({a^\dag}^3a)^n$, but rather
expands it in powers of $\Omega'={a^\dag}^2a$.  Here,
$(L,R;L',R')=(3,0;2,0)$ and $E=2$.  This formula incorporates a
closed-form expression for the numbers $S_{n,k}(-2,-1;0)$ with ${0\le
  k\le n}$, which applies when $n\ge1$.  They are also called the
unsigned Bessel numbers of the first kind, and are denoted below
by~$\hat b_{n,k}$.

There has been some work \cite{Kim2023,Mansour2011,Mansour2012a} on
the combinatorics of the algebra $W_{\mathbb{C}}(s)$ over~$\mathbb{C}$
defined by $UV-\nobreak VU=V^s$.  When $s=0$, this
is~$W_{\mathbb{C}}$, and when $s\in\mathbb{N}$ it~can be identified
with a subalgebra of~$W_{\mathbb{C}}$, its embedding
in~$W_{\mathbb{C}}$ being given by $V=\adag$, $U=\adag^s a$.
In~particular, $W_{\mathbb{C}}(1)$ is the well-known shift algebra,
and $W_{\mathbb{C}}(2)$ is the Jordan plane, also called the
meromorphic Weyl algebra~\cite[Section~7.3]{Mansour2016}.  Many
ordering identities in~$W_{\mathbb{C}}(s)$ follow from the parametric
identity~(\ref{eq:firstmainmod}), and the appearance of Hsu--Shiue
generalized Stirling numbers in the context of $W_{\mathbb{C}}(s)$ has
already been noted~\cite{Mansour2012}.

\smallskip
The second kind of ordering identity derived in this paper is based on
a generalization of the classical Eulerian numbers, which are
traditionally denoted by~$A_{n,k}$ and have more recently been denoted
by $\eulerian{n}k = A_{n,k+1}$.  These numbers are not well
known to physicists, their combinatorial interpretation having been
discovered as late as the 1950's \cite{Comtet74,Graham94,Riordan58}.
Identities in the Weyl--Heisenberg algebra~$W_{\mathbb{C}}$ based on
the classical numbers are (in~present notation)
\begin{subequations}
\label{eq:euleriank}
\begin{align}
  n!\,(\adag a)^n &= \sum_{k=0}^n E_{n,k}(0,1;\,0)\, \adag^k (a)^n \adag^{(n-k)},\label{eq:euleriank1}\\
n!\,(a \adag)^n &= \sum_{k=0}^n E_{n,k}(0,1;\,1)\, \adag^k (a)^n \adag^{(n-k)},\label{eq:euleriank2}
\end{align}
\end{subequations}
where the coefficients $E_{n,k}(0,1;0)$ and $E_{n,k}(0,1;1)$
respectively equal $A_{n,k}$ and $\eulerian{n}k=A_{n,k+1}$.

Generalized Eulerian numbers $E_{n,k}$, $0\le k\le n$, with three
complex parameters, which are of the type needed for
extending~(\ref{eq:euleriank}), have been considered in
\cite{Carlitz79,Charalambides82,Hsu99} and by the
author~\cite{Maier28}; for other generalizations,
see~\cite{Sandor2004}.  These numbers appear as coefficients in
expansions that are generalized versions of (\ref{eq:euleriank1})
and~(\ref{eq:euleriank2}).  Given any single-annihilator word
$\Omega=\adag^L a \adag^R $, one can expand its $n$'th power in
``twistings'' of the $n$'th power of another such word,
$\Omega'=\adag^{L'} a \adag^{R'}$.  More precisely, for sufficiently
large $p,q\in\mathbb{N}$, one can expand
$\left({a^\dag}^p\right)^n\Omega{}^n\left({a^\dag}^q\right)^n$ as a
combination of terms of the form $\adag^{p'} {\Omega'}{}^n
\adag^{q'}$, with each term having the same value of~$p'+\nobreak q'$,
which is a certain multiple of~$n$.  In each identity
in~(\ref{eq:euleriank}), $p=q=0$, with $\Omega'=a$ and $p'+q'=n$.

A general expansion of this type (the first of four in
Theorem~\ref{thm:powerful2}) is
\begin{equation}
\label{eq:introE2}
\begin{aligned}
  &n!\, (-e')^n \left({a^\dag}^{(E_L-e)}\right)^n \left({a^\dag}^L a
  {a^\dag}^R\right)^n \left(\adag^{E_R}\right)^n
  \\ &\qquad{}=\sum_{k=0}^n E_{n,k}\left(-e,-e';\,R-R'\right)
  \\ &\qquad\qquad{}\times\left({a^\dag}^{E_L}\right)^{n-k}
  \left({a^\dag}^{({E_L}-e')}\right)^k
  \left({a^\dag}^{L'}a{a^\dag}^{R'}\right)^n
  \left(\adag^{(E_R-e')}\right)^{n-k} \left(\adag^{E_R}\right)^k,
\end{aligned}
\end{equation}
where $e=L+R-1$, $e'=L'+R'-1$, with $E_L=\max(e,e',0)$ and
$E_R=\max(e',0)$.  In~(\ref{eq:introE2}), one sees that
$p=E_L-\nobreak e$, $q=E_R$, $p'+\nobreak q' = (E_L+\nobreak
E_R-\nobreak e')n$.  Similarly to~(\ref{eq:firstmainmod}), this
expansion is obtained from a simpler one in~$\tilde W_{\mathbb{C}}$
(see Theorem~\ref{thm:secondmain}) by left-multiplying
by~$(x^{E_L})^n$ and right-multiplying by~$(x^{E_R})^n$, with
$E_L,E_R$ sufficiently large that each negative power of~$x$ on either
side is removed or made positive.  The identities
(\ref{eq:euleriank1}),(\ref{eq:euleriank2}) come
from~(\ref{eq:introE2}) by setting $(L,R;\allowbreak
L',R')=(1,0;\allowbreak 0,0)$ and~$(0,1;0,0)$ respectively, so that
(in~each case) $e=\nobreak0$, $e'=\nobreak-1$, and $E_L=\nobreak
E_R=\nobreak0$.

As with the $S_{n,k}$, the generalized Eulerian numbers $E_{n,k}$
appearing as coefficients can be computed by various means: summation
formulas, triangular recurrences, or identities relating them at
different indices $n,k$ and different values of their parameters.
Closed-form expressions at certain parameter values are also derived
below.  Another sample application of~(\ref{eq:introE2}) is the
expansion
\begin{equation}
\label{eq:sampleeulerian}
  2^n\,(\adag)^n (\adag^2 a)^n (\adag^2)^n = \sum_{k=0}^n
  \binom{n+1}{2k+1}\, (\adag^2)^k (\adag^3 a)^n (\adag^2)^{n-k},
\end{equation}
in which $\Omega^n = (\adag^2 a)^n $ and $\Omega'{}^n = (\adag^3
a)^n$.  This comes by setting $(L,R;L',R')=(2,0;3,0)$, so that $e=1$,
$e'=2$, $E_L=E_R=2$, and replacing the summation index~$k$
by~$n-\nobreak k$.  The binomial coefficients on the right-hand side
come from the closed-form expression $(-1)^n\, n!\binom{n+1}{2n-2k+1}$
for $E_{n,k}(-1,-2;0)$, and the only nonvanishing terms in the sum are
those with $0\le k\le\lfloor\frac{n}2\rfloor$.

\smallskip
This paper is heavily influenced by the early work of Toscano, who in
a series of papers published in the 1930's (mostly cited
in~\cite{Mansour2016}) obtained many singly parametrized differential
operator expansions, the coefficients in which satisfy triangular
recurrences.  Among his tools was a generalized Boole identity
(Lemma~\ref{lem:major}), which will play a role below.  An entry point
to his work is provided by~\cite{Toscano39}.  Other than being
$1$\nobreakdash-indexed rather than $0$\nobreakdash-indexed, most of
his expansion coefficients are one-parameter specializations of the
three-parameter $S_{n,k}$ and~$E_{n,k}$ used here.  For instance, his
$B_{n+1,k+1}^{(u)}$ equals $E_{n,k}(1-\nobreak u, 1; u)$.

The plan of the paper is as follows.  Section~\ref{sec:label} explains
notation and mathematical conventions.  Section~\ref{sec:algebra}
contains some basic facts about the algebras $W_{\mathbb{C}}$
and~$\tilde W_{\mathbb{C}}$, including the generalized Boole identity.
In Section~\ref{sec:fundamentals} the parametric numbers $S_{n,k}$
and~$E_{n,k}$ are introduced and investigated in depth, with an
emphasis on recurrences, summation formulas, and other computationally
useful results, such as the fact that they are binomial transforms of
each other.  Section~\ref{sec:stirlingformulas} derives some
closed-form expressions for the~$S_{n,k}$ and comments on
combinatorial interpretations.  Ordering identities of the generalized
Stirling kind are derived in Section~\ref{sec:stirlingids}\null.
Section~\ref{sec:eulerianformulas}~derives some closed-form
expressions for the~$E_{n,k}$ and briefly discusses combinatorial
interpretations.  Finally, ordering identities of the generalized
Eulerian kind are derived in Section~\ref{sec:eulerianids}.

\section{Notation and conventions}
\label{sec:label}

Knuth's overline and underline notations~\cite{Knuth92} are used for
rising and falling factorial powers.  For all $r\in\mathbb{C}$
and~$m\in\mathbb{N}$, $(r)^{\overline m}$~equals
$(r)(r+\nobreak1)\dots\allowbreak(r+\nobreak m-\nobreak1)$, with
$(r)^{\overline 0}=1$.  Likewise, $(r)^{\underline m}$~equals
$(r)(r-\nobreak1)\dots\allowbreak(r-\nobreak m+\nobreak1)$, with
$(r)^{\underline 0}=1$.

These notations are extended here to include a non-unit ``stride.''
For all $\alpha\in\mathbb{C}$, $(r)^{\overline m,\alpha}$~equals $(r)(r+\nobreak
\alpha)\dots\allowbreak[r+\nobreak (m-\nobreak 1)\alpha]$, and likewise,
$(r)^{\underline m,\alpha}$~equals $(r)(r-\nobreak
\alpha)\dots\allowbreak[r-\nobreak (m+\nobreak 1)\alpha]$.  Note also that
$\binom{n}k$ is defined for all $n,k\in\mathbb{N}$, with
$\binom{n}k=0$ if~$k>n$, and that~$0^0=1$.

Knuth's notations $\stirsub{n}k$, $\stircyc{n}k$, and $\eulerian{n}k$,
where $0\le k\le n$, are used for the classical Stirling numbers of
the second kind, the unsigned ones of the first kind, and the Eulerian
numbers. The last differs from the classical definition in having its
lower index shifted by unity.  That is, $\eulerian{n}k=A_{n,k+1}$.

The three-parameter generalized Eulerian numbers $E_{n,k}(\alpha,\beta;r)$
employed here are specializations of four-parameter ones of the
author.  In previously used notation and terminology~\cite{Maier28},
the $E_{n,k}(\alpha,\beta;r)$ would be written as $E_{n,k}(\alpha,\beta;\allowbreak
r,\beta-\nobreak r)$ and called ``single progression'' generalized
Eulerian numbers.

The triangular recurrences derived below, satisfied by $E_{n,k}$
and~$S_{n,k}$, are of the GKP type, $c_{n+1,k+1}=[\alpha n+\beta (k+1)
  + \gamma]\,c_{n,k+1} + [\alpha' n+\beta'k + \gamma']\,c_{n,k}$.  The
importance of such recurrences was noted by Graham, Knuth, and
Patashnik \cite[Chapter~6,\ Problem~94]{Graham94}, after whom they are
named.

Several products $\mathcal{A}\mathcal{B}$ of infinite matrices appear,
the rows and columns of each matrix being indexed by~$\mathbb{N}$.
In~all cases $\mathcal{A}$~is row-finite (each of its rows has finite
support), or $\mathcal{B}$~is column-finite.  Hence each element
of~$\mathcal{A}\mathcal{B}$ can be computed as a finite sum.

In a standard definition, terminating hypergeometric series are those
in which an ``upper'' parameter is a nonpositive integer, such as the
Gauss hypergeometric series
\begin{equation}
  {}_2F_1
  \biggl[
    \begin{array}{cc}
      -N, & b
      \\
      & c
    \end{array}
    \biggm|
    z\,
    \biggr]
  \defeq
  \sum_{k=0}^\infty \frac{(-N)^{\overline k}(b)^{\overline k}}{(c)^{\overline k}}\, \frac{z^k}{k!}\,.
\end{equation}
If $N\in\mathbb{N}$, one expects the series to terminate with the
$k=N$ term.  Usually the lower parameter~$c$ is required not to be a
nonpositive integer, as the series is not defined due~to division(s)
by zero if $c$~is an integer in the range ${-N\le c\le 0}$.  To~cover
such cases, a desingularized function~${}_2\hat F_1$, defined
informally as $(c)^{\overline N}\,{}_2F_1$, is used here.  That is,
\begin{equation}
  {}_2\hat F_1
  \biggl[
    \begin{array}{cc}
      -N, & b
      \\
      & c
    \end{array}
    \biggm|
    z\,
    \biggr]
  \defeq
  \sum_{k=0}^N (-N)^{\overline k}(b)^{\overline k} (c+k)^{\overline {N-k}} \: \frac{z^k}{k!}\,,
\end{equation}
because $(c)^{\overline N}/(c)^{\overline k}=(c+k)^{\overline{N-k}}$
if there is no~division by zero.  ${}_2\hat F_1(-N,b;c;z)$ when
$N\in\mathbb{N}$ is defined for all $c\in\mathbb{C}$, and is
continuous in~$c$ as well as in~$b$.  This desingularization of the
parametric function~${}_2F_1$ differs from that of
Olver \cite[Chapter~15]{Olver2010}, in which it is multiplied by
$1/\Gamma(c)$ rather than $(c)^{\overline N}$.

\section{The Weyl--Heisenberg algebra}
\label{sec:algebra}

The (one-dimensional) Weyl--Heisenberg algebra $W_{\mathbb{C}}$ is a
unital associative algebra over the field of scalars~$\mathbb{C}$
generated by noncommuting indeterminates~$\adag,a$.  Any element
of~$W_{\mathbb{C}}$ is a combination over~$\mathbb{C}$ of finite
products of~$\adag,a$, and the unit (multiplicative identity) of the
algebra, conventionally written as~$1$, is the empty product
multiplied by the scalar~$1$.

The representation of elements of~$W_{\mathbb{C}}$ is not unique, due
to the existence of the commutation relation $[a,\adag]=1$.  One can
write
  \begin{equation}
    W_{\mathbb{C}} = \mathbb{C}\langle \adag,a\rangle \,/\, \mathbb{C}(a\adag - \adag
    a - 1),
  \end{equation}
where $\mathbb{C}\langle \adag,a\rangle$ is the free associative
unital algebra over~$\mathbb{C}$ generated by~$\adag,a$, and the ideal
quotiented~out is the two-sided one generated over~$\mathbb{C}$ by the
element $a\adag - \nobreak\adag a - \nobreak1$, where $1$~is the unit
of the free algebra.  By definition, an ordering identity
in~$W_{\mathbb{C}}$ is an element of this two-sided ideal.  For
example, one can prove by induction that for all $m,n\in\mathbb{N}$,
\begin{equation}
\label{eq:difflr}
  a^m \adag^n= \sum_{\ell=0}^{\min(m,n)} \ell\,!\, \binom{m}{\ell}\binom{n}{\ell}\,
  \adag^{(n-\ell)} a^{m-\ell}
\end{equation}
holds as an identity in~$W_\mathbb{C}$.  This expresses an
anti-normally ordered monomial in~$\adag,a$ in~terms of normally
ordered ones.  Viewed as an element of $\mathbb{C}\langle
\adag,a\rangle$, the difference between the left-hand and right-hand
sides of~(\ref{eq:difflr}) is an element of the ideal
$\mathbb{C}( a\adag -\nobreak \adag a -\nobreak 1)$.

The polynomial algebra over~$\mathbb{C}$ generated by $x,D=D_x={\rm
  d}/{\rm d}x$ with $[D,x]=1$ is isomorphic to~$W_{\mathbb{C}}$, but
including negative integer powers of~$x$ as elements extends it to an
algebra that has $x^{-1}$ as an additional generator, subject to the
relation $[D,x^{-1}]=-(x^{-1})^2$.  The extension algebra~$\tilde
W_{\mathbb{C}}$ used here is a larger, infinitely generated one: it
has each power $x^\alpha$, $\alpha\in\mathbb{C}$, as an element,
subject to $[D,x^\alpha]=\alpha(x^{\alpha-1})$.  It is also useful to
consider the deformation $W_{\mathbb{C}}(s)$ of~$W_{\mathbb{C}}$
defined by $UV-\nobreak VU=V^s$, where $s\in\mathbb{C}$.  As noted,
when $s\in\mathbb{N}$ it is isomorphic to a subalgebra
of~$W_{\mathbb{C}}$, with $V,U$ and $\adag,\adag^s a$ identified.
When $s\in\mathbb{C}\setminus{\mathbb{N}}$ it is a subalgebra
of~$\tilde W_{\mathbb{C}}$, with $V,U$ and $x,x^s D$ identified.
Owing to the embedding $W_{\mathbb{C}}(s)\hookrightarrow W_{\mathbb{C}}$
(resp.~$\tilde W_{\mathbb{C}}$), any ordering identity
in~$W_{\mathbb{C}}(s)$ can be viewed as an ordering identity
in~$W_{\mathbb{C}}$ (resp.~$\tilde W_{\mathbb{C}}$).

The algebras $W_{\mathbb{C}}$ and $\tilde W_{\mathbb{C}}$ are
respectively $\mathbb{Z}$\nobreakdash-graded and
$\mathbb{C}$\nobreakdash-graded.  For any $\omega\in{W}_{\mathbb{C}}$
that is a nonzero scalar multiple of a product of $\adag$'s and~$a$'s
(i.e., a monomial), let $|\omega|_\adag,\allowbreak |\omega|_a$ denote
the number of $\adag$'s and~$a$'s.  Similarly, if an element $\tilde
\omega\in\tilde{W}_{\mathbb{C}}$ is a nonzero multiple of a product of
powers of $x$ and~$D$, let $|\tilde \omega|_x,\allowbreak |\tilde
\omega|_D$ denote the sum of those powers: for instance,
$|x^{1/2}Dx^{-1/2}|_x=0$ and $|x^{1/2}Dx^{-1/2}|_D=1$.  The
\emph{excess} (or~degree) of~$\omega$ is
$e(\omega)\defeq|\omega|_\adag-\nobreak |\omega|_a\in\mathbb{Z}$, and
that of~$\tilde \omega$ is $e(\tilde \omega)\defeq|\tilde
\omega|_x-\nobreak |\tilde \omega|_D\in\mathbb{C}$.  These algebras
are graded by excess: any element can be written as a sum of groups of
terms, each group containing terms with a distinct excess.
For~$W_{\mathbb{C}}$, the quotienting~out of the ideal
$\mathbb{C}(a\adag -\nobreak \adag a -\nobreak 1)$
respects the grading, as each term in $a\adag -\nobreak \adag a
-\nobreak 1$ has zero excess; and similarly for~$\tilde
W_{\mathbb{C}}$.

As any ordering identity (i.e., element of the ideal) can be grouped
into terms with distinct excesses, it is reasonable to focus on ones
in which only a single excess is represented.  For instance, each term
in~(\ref{eq:difflr}) has excess $n-\nobreak m\in\mathbb{Z}$, and in
the identities (\ref{eq:katriel}), (\ref{eq:firstmain}),
(\ref{eq:firstmainmod}), (\ref{eq:sampleappl}) of the introduction,
the common excess is respectively $0,0,\allowbreak En,2n$.  The
algebra~$W_{\mathbb{C}}$ has a
subalgebra~$W_{\mathbb{C}}^{\textrm{bal}}$ (and $\tilde
W_{\mathbb{C}}$ has a subalgebra~$\tilde
W_{\mathbb{C}}^{\textrm{bal}}$) that comprises all linear combinations
of ``balanced'' or ``scale invariant'' monomials: ones with excess~$0$.
It can be shown that $W_{\mathbb{C}}^{\textrm{bal}}$ is singly
generated: it is $\mathbb{C}[\adag a]$, the algebra of
complex-coefficient polynomials in the boson number operator
$\mathcal{N}\defeq\adag a$, though it could also be written as
$\mathbb{C}[a \adag]$ or $\mathbb{C}[\adag a+ a\adag]$, etc.

Identities relevant to single-annihilator $\Omega(\adag,a)$ include
what is properly called the Tait--Toscano--Viskov
identity \cite{Mansour2016,Toscano55}, which is $(\adag a \adag)^n =
\adag^na^n\adag^n$ in~$W_{\mathbb{C}}$ and $(xDx)^n=x^nD^nx^n$
in~$\tilde W_{\mathbb{C}}$; it holds for all $n\in\mathbb{N}$.
Another pair of identities is
\begin{subequations}
  \begin{align}
    & \adag^L a \adag^R + \adag^R a \adag^L = 2(\adag)^{(L+R)/2} a (\adag)^{(L+R)/2},&&\text{in $W_{\mathbb{C}}$},\label{eq:otherpaira}\\
    &x^L D x^R + x^R D x^L = 2x^{(L+R)/2} D x^{(L+R)/2},&&\text{in $\tilde W_{\mathbb{C}}$}.\label{eq:otherpairb}
  \end{align}
\end{subequations}
These hold respectively for all $L,R\in\mathbb{N}$ with $L+\nobreak R$
even, and for all $L,R\in\mathbb{C}$.  The following will play a major
role in deriving operator ordering identities from polynomial
identities.

\begin{lemma}
\label{lem:major}
  In\/ $\tilde W_{\mathbb{C}}^{\rm{bal}}$, the identities
  \begin{subequations}
  \begin{align}
    \label{eq:majorrole1a}
    (xD)^{\underline n,\alpha} &= (x^\alpha)^n (x^{1-\alpha}D)^n,\\
    &= (xDx^\alpha)^n (x^{-\alpha})^n
    \label{eq:majorrole1b}
  \end{align}
  \end{subequations}
  hold for all $n\in\mathbb{N}$ and $\alpha\in\mathbb{C}$.  More generally,
  \begin{subequations}
  \begin{align}
    \label{eq:majorrole2a}
    (xD+r)^{\underline n,\alpha} &= x^{-r}\left[(x^\alpha)^n (x^{1-\alpha}D)^n\right]x^r
    =(x^\alpha)^n(x^{1-\alpha-r}Dx^r)^n,
    \\
    &= x^{-r}\left[ (xDx^\alpha)^n (x^{-\alpha})^n \right] x^r
    = (x^{1-r}Dx^{\alpha+r})^n (x^{-\alpha})^n
    \label{eq:majorrole2b}
    \end{align}
    \end{subequations}
  hold for all $n\in\mathbb{N}$ and $\alpha,r\in\mathbb{C}$, with
  \begin{equation}
    \label{eq:powerfullemma}
    (x^{1-L-R})^n(x^LDx^R)^n = (x^{1-R}Dx^{1-L})^n (x^{L+R-1})^n
  \end{equation}
  being a reparametrized statement of the equality between\/
  {\rm(\ref{eq:majorrole2a})} and\/~{\rm(\ref{eq:majorrole2b})}, which
  holds for all $n\in\mathbb{N}$ and $L,R\in\mathbb{C}$.
\end{lemma}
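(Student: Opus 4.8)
The plan is to reduce every assertion to the single fact governing how the Euler operator $N\defeq xD$ commutes past a power of~$x$ in $\tilde W_{\mathbb{C}}^{\rm bal}$. From the defining relation $[D,x^\alpha]=\alpha(x^{\alpha-1})$ one computes $xDx^\alpha = x^{\alpha+1}D+\alpha x^\alpha = x^\alpha(xD+\alpha)$, which is the identity $N\,x^\alpha = x^\alpha(N+\alpha)$ for every $\alpha\in\mathbb{C}$; equivalently $x^{-\alpha}Nx^\alpha = N+\alpha$. I would also record the splittings $x^{1-\alpha}D = x^{-\alpha}N$ and $xDx^\alpha = x^\alpha(N+\alpha)$, each of which writes one factor of the products in the lemma as a power of~$x$ times something built from~$N$, and I use throughout that powers of~$x$ compose additively, so $(x^\alpha)^n = x^{n\alpha}$ and such powers commute with one another.

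First I would settle the $r=0$ cases, (\ref{eq:majorrole1a}) and~(\ref{eq:majorrole1b}). For~(\ref{eq:majorrole1a}), starting from $(x^{1-\alpha}D)^n = (x^{-\alpha}N)^n$ I pull each factor $x^{-\alpha}$ to the left using $Nx^{-\alpha}=x^{-\alpha}(N-\alpha)$; an induction on~$n$ gives $(x^{-\alpha}N)^n = x^{-n\alpha}\,N(N-\alpha)\cdots(N-(n-1)\alpha) = x^{-n\alpha}\,(xD)^{\underline n,\alpha}$, the reordering of the $N$-factors being harmless since they are polynomials in~$N$. Left-multiplying by $(x^\alpha)^n=x^{n\alpha}$ cancels the power of~$x$ and yields~(\ref{eq:majorrole1a}). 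For~(\ref{eq:majorrole1b}), the analogous bookkeeping applied to $(xDx^\alpha)^n = (x^\alpha(N+\alpha))^n$, now using $(N+j\alpha)x^\alpha = x^\alpha(N+(j+1)\alpha)$, gives $x^{n\alpha}(N+\alpha)(N+2\alpha)\cdots(N+n\alpha)$; right-multiplying by $(x^{-\alpha})^n=x^{-n\alpha}$ and commuting it back through the product lowers each $N$-factor by~$n\alpha$, leaving $N(N-\alpha)\cdots(N-(n-1)\alpha)=(xD)^{\underline n,\alpha}$ once more. Hence (\ref{eq:majorrole1a}) and~(\ref{eq:majorrole1b}) agree.

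The general statements are then pure conjugation. Since $x^{-r}Nx^r=N+r$, conjugating any polynomial in~$N$ by~$x^r$ replaces $N$ by $N+r$, so $x^{-r}\,(xD)^{\underline n,\alpha}\,x^r = (N+r)^{\underline n,\alpha} = (xD+r)^{\underline n,\alpha}$; applying this to the established identity~(\ref{eq:majorrole1a}) gives the first equality in~(\ref{eq:majorrole2a}), and the second follows because $x^{-r}(x^{1-\alpha}D)x^r = x^{1-\alpha-r}Dx^r$ while $(x^\alpha)^n$ commutes with~$x^{-r}$, so $(x^\alpha)^n(x^{1-\alpha-r}Dx^r)^n = x^{-r}\bigl[(x^\alpha)^n(x^{1-\alpha}D)^n\bigr]x^r$. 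The same computation with $x^{1-\alpha}D$ replaced by $xDx^\alpha$, the factor $(x^{-\alpha})^n$ moved to the right, and $x^{-r}(xDx^\alpha)x^r = x^{1-r}Dx^{\alpha+r}$, proves~(\ref{eq:majorrole2b}); and since~(\ref{eq:majorrole1a}) equals~(\ref{eq:majorrole1b}) and conjugation by~$x^r$ is an automorphism, (\ref{eq:majorrole2a}) equals~(\ref{eq:majorrole2b}). Finally, (\ref{eq:powerfullemma}) is exactly the equality of the right-hand sides of~(\ref{eq:majorrole2a}) and~(\ref{eq:majorrole2b}) under the substitution $\alpha=1-L-R$, $r=R$, for which $1-\alpha-r=L$, $1-r=1-R$, $\alpha+r=1-L$, and $-\alpha=L+R-1$.

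I do not expect a genuine obstacle here: the only place demanding care is the inductive bookkeeping in the second step, where one must track how pulling a power of~$x$ through a string of factors shifts each occurrence of~$N$ so as to reproduce exactly the strided falling factorial $(xD)^{\underline n,\alpha}$ rather than a mis-shifted variant. Everything after that is conjugation and a change of variables.
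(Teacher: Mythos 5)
Your proposal is correct and follows essentially the route the paper itself indicates: the paper only sketches a proof (the $r=0$ identities "can be proved by induction," and the $r$-versions "come from them by similarity transformations"), and your argument carries out exactly that plan — an induction via the commutation rule $N x^{\alpha}=x^{\alpha}(N+\alpha)$ for (\ref{eq:majorrole1a})--(\ref{eq:majorrole1b}), followed by conjugation by $x^{r}$ and the substitution $\alpha=1-L-R$, $r=R$ for the rest.
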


When $\alpha=1$, (\ref{eq:majorrole1a})~specializes to $(xD)^{\underline
  n}=x^nD^n$, which is the well-known identity of
Boole~\cite{Mansour2016}: in~the $W_{\mathbb{C}}^{\text{bal}}$ form
$(\adag a)^{\underline n}=\adag^n a^n$, it normally-orders any falling
factorial power of the number operator.  But
(\ref{eq:majorrole1a}),(\ref{eq:majorrole1b}) are not so well known,
though (\ref{eq:majorrole1a})~appears as Proposition~5
of~\cite{MohammadNoori2011}.  The versions
(\ref{eq:majorrole2a}),(\ref{eq:majorrole2b}) come from them by
similarity transformations.  All these identities can be proved by
induction, or (in~$\tilde W_{\mathbb{C}}$) operationally: showing that
the left and right-hand sides act identically on monomials or
polynomials in~$x$~\cite{Carlitz76}.

The algebras $W_{\mathbb{C}}$ and $\tilde W_{\mathbb{C}}$ have
nontrivial automorphisms, but the only one that will appear here is
the formal adjoint operation~$*$ on~$\tilde W_{\mathbb{C}}$ (strictly,
an anti-automorphism), which treats its elements as differential
operators.  For instance, $\left[ (x^L D x^R)^n\right]^*$ equals
$(-1)^{n} (x^R D x^L)^n$.  Up~to parameter renamings, the adjoint
operation~$*$ transforms (\ref{eq:majorrole2a})
to~(\ref{eq:majorrole2b}), and vice versa, and
(\ref{eq:powerfullemma})~is self-adjoint: it~is transformed to itself.

\section{Generalized Stirling and Eulerian numbers}
\label{sec:fundamentals}

The unified generalized Stirling numbers
$S_{n,k}=S_{n,k}(\alpha,\beta;r)$ of Hsu and Shiue~\cite{Hsu98}, and
related generalized Eulerian numbers
$E_{n,k}=E_{n,k}(\alpha,\beta;r)$, both of which are indexed by $n,k$
with $0\le k\le n$, can be defined in several ways.  They are given by
finite sums (see Theorem~\ref{thm:altdef1} below), and they satisfy
triangular recurrences (see Theorem~\ref{thm:recurrences}), which can
be used to compute them when $n$ and~$k$ are small.  But they can also
be defined abstractly, as coefficients that connect alternative bases
of the linear space of polynomials of degree~$\le n$ in an
indeterminate~$x$.  This leads to many identities in polynomial
algebra.

These numbers will be viewed as elements of infinite, lower triangular
matrices $\mathcal{S}=[S_{n,k}]$ and $\mathcal{E}=[E_{n,k}]$, indexed
by $n,k\in\mathbb{N}$.  The triangles $\mathcal{S}$ and~$\mathcal{E}$
are parametrized by $\alpha,\beta,r\in\mathbb{C}$, which specify the
bases.  As will be seen, the modified triangle
$\hat{\mathcal{S}}=[\hat S_{n,k}]\defeq[\beta^kk!\,S_{n,k}]$ and the
triangle~$\mathcal{E}$ are in a sense dual to each other.  To~reveal
the symmetry, many identities in this section are written in~terms of
$\hat S_{n,k}$ rather than~$S_{n,k}$.  Though a (weak) case can be
made that the generalized Stirling numbers should be redefined to
include the factor~$\beta^kk!$, the original
normalization~\cite{Hsu98} will be maintained here.

In the linear-algebraic approach, the numbers $\hat S_{n,k}$
and~$E_{n,k}$, $0\le k\le n$, are defined to satisfy
\begin{subequations}
\begin{align}
\label{eq:Sdef2}
(\beta x+r)^{\underline n,\alpha} &= \sum_{k=0}^n \hat
S_{n,k}(\alpha,\beta;\,r)\binom{x}{k},\\
\label{eq:Edef2}
(\beta x+r)^{\underline n,\alpha} &= \sum_{k=0}^n
E_{n,k}(\alpha,\beta;\,r)\,\binom{x+n-k}{n}.
\end{align}
\end{subequations}
The sets of polynomials $\left[\binom{x}k\right]_{k=0}^n$ and
$\left[\binom{x+n-k}n\right]_{k=0}^n$ are bases of the space of
polynomials of degree~$\le n$ in~$x$ (the latter statement requiring a
proof~\cite{Carlitz78}), so $\hat S_{n,k}$ and~$E_{n,k}$ are uniquely
defined.  They are continuous functions of $\alpha,\beta,r$.  If
$\beta\neq0$, replacing $x$ by $(x-\nobreak r)/\beta$ and defining
$S_{n,k}\defeq \hat S_{n,k}/\beta^kk!$ produces the less symmetrical pair
of definitions
\begin{subequations}
\begin{align}
\label{eq:Sdef1}
(x)^{\underline n,\alpha} &= \sum_{k=0}^n S_{n,k}(\alpha,\beta;\,r)\,(x-r)^{\underline k,\beta},
\\
\label{eq:Edef1}
\beta^nn!\,(x)^{\underline n,\alpha} &= \sum_{k=0}^n E_{n,k}(\alpha,\beta;\,r)\,\left[x-r+\beta(n-k)\right]^{\underline n,\beta}.
\end{align}
\end{subequations}
The definition~(\ref{eq:Sdef1}) is that of Hsu and Shiue, and it
defines the $S_{n,k}$ uniquely even when $\beta=0$, as continuous
functions of $\alpha,\beta,r$.  (Note that if $\beta=0$, $\hat
S_{n,k}$ vanishes unless $k=0$, but the same is not true
of~$S_{n,k}$.)  A~consequence of~(\ref{eq:Sdef1}) is that the
parametric triangle $\mathcal{S}=[S_{n,k}]$ satisfies
\begin{equation}
\label{eq:Smultrule}
  \mathcal{S}(\alpha,\gamma;\,r_1+r_2)= \mathcal{S}(\alpha,\beta;\,r_1)\,\mathcal{S}(\beta,\gamma;\,r_2),
\end{equation}
matrix multiplication being meant.  In accordance
with~(\ref{eq:Smultrule}), $\mathcal{S}(\alpha,\alpha;0)$ for
any~$\alpha$ equals the infinite identity matrix~$\mathcal{I}$, and
for any~$\alpha,\beta,r$, the inverse matrix
$\mathcal{S}(\alpha,\beta;r)^{-1}$ exists and equals
$\mathcal{S}(\beta,\alpha;-r)$.  

For the~$E_{n,k}$, the definition~(\ref{eq:Edef1}) has been used by
the author~\cite{Maier28}.  It defines the $E_{n,k}$ uniquely when
$\beta\neq0$, though (\ref{eq:Edef2}) must be used when~$\beta=0$.
When $(\alpha,\beta;r)=(0,1;0)$, it reduces to Worpitzky's
identity~\cite{Graham94}, which is satisfied by the classical Eulerian
numbers.  The case when $\alpha=0$ but $\beta,r$ are arbitrary, which
is intermediate between the classical case and the present one, has
recently been treated~\cite{Mansour2019,Mezo2016,Ramirez2018}.

Eq.~(\ref{eq:Sdef2}) has a matrix-algebraic interpretation.  Let
$\mathcal{V}(\alpha,\beta;r)=\allowbreak\left[(\beta x+\nobreak r)^{\underline
    n,\alpha}\right]$ be an infinite matrix of generalized Vandermonde type
indexed by $n,x\in\mathbb{N}$, and let
$\mathcal{P}=\left[{\binom{x}{k}}\right]$ be an infinite Pascal matrix
indexed by $x,k\in\mathbb{N}$, which is lower triangular (nonzero
elements having $0\le k\le x$).  Also, let ${\mathcal{P}}^t$ be its
upper-triangular transpose.  Then (\ref{eq:Sdef2})~says that
\begin{equation}
\label{eq:LU}
  \mathcal{V}(\alpha,\beta;\,r) = \hat{\mathcal{S}}(\alpha,\beta;\,r)\,{\mathcal{P}}^t,
\end{equation}
which defines $\hat{\mathcal{S}}(\alpha,\beta;r)$ as the
$\text{L}$\nobreakdash-factor in a certain $\text{LU}$~decomposition
of $\mathcal{V}(\alpha,\beta;r)$.  For a related interpretation
of~(\ref{eq:Edef2}), see the remark after Theorem~\ref{thm:1} below.

It follows by inspection that irrespective of the choice of parameters
$\alpha,\beta,r$, the apex elements $S_{0,0}$, $\hat S_{0,0}$, and
$E_{0,0}$ of the triangles $\mathcal{S}$,
$\hat{\mathcal{S}}$,~$\mathcal{E}$ equal unity.  The homogeneity
properties
\begin{equation}
\begin{gathered}
S_{n,k}(\lambda \alpha,\lambda \beta;\,\lambda r) = \lambda^{n-k} S_{n,k}(\alpha,\beta;\,r)  
\\
\hat S_{n,k}(\lambda \alpha,\lambda \beta;\,\lambda r) = \lambda^n \hat S_{n,k}(\alpha,\beta;\,r),\qquad
E_{n,k}(\lambda \alpha,\lambda \beta;\,\lambda r) = \lambda^n E_{n,k}(\alpha,\beta;\,r)  
\label{eq:homogeneity}
\end{gathered}
\end{equation}
also follow by inspection.  Each $S_{n,k}$ and $E_{n,k}$ turns~out to
be a polynomial in~$\alpha,\beta,r$ with integral coefficients (see
below); so if $\alpha,\beta,r$ are integers, each $S_{n,k}$ and
$E_{n,k}$ will be an integer.

For some choices of $\alpha,\beta,r\in\mathbb{Z}$, all $S_{n,k}$
and/or $E_{n,k}$ may be non-negative integers, in which case they may
have combinatorial interpretations.  This is the case for many
$\alpha,\beta,r$, as can be confirmed by comparing the recurrences
satisfied by the $S_{n,k}$ and $E_{n,k}$ (see
Theorem~\ref{thm:recurrences} below) with ones that have appeared in
the literature \cite{Comtet74,OEIS2023}.  Examples of such
triangularly indexed combinatorial numbers include the Stirling subset
numbers $\stirsub{n}k=S_{n,k}(0,1;0)$, which count the partitions of
an $n$\nobreakdash-set into $k$~non-empty blocks.  They are often
called the Stirling numbers of the second kind and are also denoted by
$S(n,k)$.  (When $(\alpha,\beta;r)=(0,1;0)$, the Vandermonde
factorization~(\ref{eq:LU}) is well known~\cite{Cheon2001}.) Another
example is provided by the Eulerian numbers $\eulerian{n}k =
E_{n,k}(0,1;1)$, which count the permutations of an
$n$\nobreakdash-set that have exactly $k$~descents
\cite{Graham94,Riordan58}. The traditionally indexed Eulerian numbers
$A_{n,k}$ are horizontally shifted versions: $A_{n,k+1}$
equals~$\eulerian{n}k$, with $A_{n,0}=0$ unless~$n=0$; and
$A_{n,k}=E_{n,k}(0,1;0)$.

Known $S_{n,k}$ with $\beta=0$ include $\binom{n}k=S_{n,k}(0,0;1)$ and
the Stirling cycle numbers $\stircyc{n}k=S_{n,k}(-1,0;0)$, which count
the permutations of an $n$\nobreakdash-set, the cycle decompositions
of which have exactly $k$~cycles.  They are often called the unsigned
Stirling numbers of the first kind and are also denoted by
$(-1)^{n-k}s(n,k)$, the numbers $s(n,k)=S_{n,k}(1,0;0)$ being the
signed versions.  It follows from~(\ref{eq:Smultrule}) that
$\mathcal{S}(\alpha,\beta;\,r) = \mathcal{S}(\alpha,0;\,0)\,
\mathcal{S}(0,0;\,r)\, \mathcal{S}(0,\beta;\,0)$, i.e.,
\begin{equation}
\label{eq:elegant}
  S_{n,k}(\alpha,\beta;\,r) = \sum_{j=k}^n\sum_{p=k}^j
  (-\alpha)^{n-j}\stircyc{n}{j} r^{j-p}\binom{j}{p}
  \beta^{p-k}\stirsub{p}k.
\end{equation}
The existence of this elegant decomposition
formula \cite{Spivey2011,Pan2012} justifies keeping the Hsu--Shiue
definition of~$S_{n,k}$, as $\stircyc{n}j$~can be expressed in~terms
of~$\hat S_{n,j}$ only by taking a $\beta\to0$ limit.

Relatively simple formulas for the $\hat S_{n,k}$ and $E_{n,k}$ can be
obtained by extracting coefficients from (\ref{eq:Sdef2})
and~(\ref{eq:Edef2}).  The following can be used to compute $\hat
S_{n,k}$ (and $S_{n,k}$ if~$\beta\neq0$), and $E_{n,k}$, for all $k$ with
$0\le k\le n$, though the effort needed grows rapidly with $n$
and~$k$.
\begin{theorem}
\label{thm:altdef1}
  The triangles\/ $\hat{\mathcal{S}}=[\hat S_{n,k}]$ and\/
  $\mathcal{E}=[{E}_{n,k}]$ are alternatively defined by
\begin{subequations}
\begin{align}
&
\label{eq:Sform}
\begin{aligned}
\hat S_{n,k}(\alpha,\beta;\,r) &= \Delta_x^k\left[(\beta x+r)^{\underline n,\alpha}\right]\bigm|_{x=0}
\\
&= \sum_{x=0}^k (-1)^{k-x} \binom{k}{x} (\beta x+r)^{\underline n,\alpha},
\end{aligned}
\\
\shortintertext{and}
&
\label{eq:Eform}
\begin{aligned}
E_{n,k}(\alpha,\beta;\,r) &= \nabla_x^{n+1}\left[(\beta x+r)^{\underline n,\alpha}\right]\bigm|_{x=k}
\\
&= \sum_{x=0}^k (-1)^{k-x} \binom{n+1}{k-x} (\beta x+r)^{\underline n,\alpha},
\end{aligned}
\end{align}
\end{subequations}
with each right-hand side equaling zero unless\/ $0\le k\le n$.
\end{theorem}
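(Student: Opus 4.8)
The plan is to derive each of the two formulas by combining the defining relation with a discrete inversion, treating the $\hat S_{n,k}$ case via finite differences and the $E_{n,k}$ case via backward differences. First I would recall that, by~(\ref{eq:Sdef2}), the polynomial $p(x)\defeq(\beta x+r)^{\underline n,\alpha}$ has expansion $p(x)=\sum_{k\ge0}\hat S_{n,k}\binom{x}{k}$ in the basis $\{\binom{x}{k}\}$. The classical fact is that the coefficient of $\binom{x}{k}$ in any such Newton expansion is recovered by applying the $k$-th forward difference operator $\Delta_x$ and evaluating at $x=0$: since $\Delta_x^k\binom{x}{j}\bigm|_{x=0}=\delta_{jk}$ (which itself follows from $\Delta_x\binom{x}{j}=\binom{x}{j-1}$), we get $\hat S_{n,k}=\Delta_x^k\,p(x)\bigm|_{x=0}$. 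Expanding $\Delta_x^k$ by the binomial theorem, $\Delta_x^k f(x)=\sum_{j=0}^k(-1)^{k-j}\binom{k}{j}f(x+j)$, and evaluating at $x=0$ gives the second line of~(\ref{eq:Sform}). The vanishing for $k>n$ is immediate because $\deg p=n$, so $\Delta_x^k p\equiv0$ once $k>n$; and of course $\hat S_{n,k}=0$ for $k<0$ trivially.

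Next I would handle~(\ref{eq:Eform}) by the analogous argument adapted to the shifted basis. By~(\ref{eq:Edef2}), $p(x)=\sum_{k\ge0}E_{n,k}\binom{x+n-k}{n}$. The natural dual operator here is the $(n{+}1)$-st backward difference $\nabla_x^{n+1}$, where $\nabla_x f(x)=f(x)-f(x-1)$; one computes $\nabla_x^{n+1}\binom{x+n-k}{n}\bigm|_{x=k'}=\delta_{kk'}$ for the relevant range, so that $E_{n,k}=\nabla_x^{n+1}p(x)\bigm|_{x=k}$. Expanding $\nabla_x^{n+1}$ binomially, $\nabla_x^{n+1}f(x)=\sum_{j=0}^{n+1}(-1)^j\binom{n+1}{j}f(x-j)$, and setting $x=k$, then reindexing by $x=k-j$ so the summand is $(-1)^{k-x}\binom{n+1}{k-x}(\beta x+r)^{\underline n,\alpha}$, gives the claimed sum. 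The upper limit of summation can be taken as $k$ rather than $\min(k,\text{?})$ because $\binom{n+1}{k-x}$ vanishes when $k-x<0$; and the whole expression is zero for $k>n$ because applying $\nabla_x^{n+1}$ — an operator of order $n{+}1$ — to the degree-$n$ polynomial $p$ annihilates it.

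The one genuine subtlety — and the step I expect to require the most care — is justifying that the $\nabla$-based extraction in~(\ref{eq:Eform}) is actually valid, i.e.\ that $\{\binom{x+n-k}{n}\}_{k=0}^n$ is a basis dual (under $f\mapsto\nabla_x^{n+1}f\bigm|_{x=k}$) to the coefficient functionals. Concretely one must verify $\nabla_x^{n+1}\binom{x+n-k}{n}\bigm|_{x=k'}=\delta_{k,k'}$ for $0\le k,k'\le n$. I would prove this by observing that $\binom{x+n-k}{n}$, as a function of the integer variable $x$, is the indicator-like step function whose $(n{+}1)$-st difference is a shifted Kronecker delta: writing $q_k(x)\defeq\binom{x+n-k}{n}$, one has $q_k(x)=0$ for $x\in\{k-n,\dots,k-1\}$ and $q_k(k)=1$, and $\nabla_x^{n+1}q_k(x)=\sum_{j=0}^{n+1}(-1)^j\binom{n+1}{j}\binom{x+n-k-j}{n}$, which by Vandermonde's identity collapses to $[x=k]$ on the range $x\le n$. (That $\{\binom{x+n-k}{n}\}_{k=0}^n$ spans the degree-$\le n$ polynomials is already granted in the text, so uniqueness of the coefficients is not at issue; only the explicit dual-functional computation is.) Once this delta property is in hand, linearity of $\nabla_x^{n+1}$ applied to the expansion~(\ref{eq:Edef2}) yields $E_{n,k}=\nabla_x^{n+1}p\bigm|_{x=k}$, and the rest is the binomial expansion already described. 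The $\hat S_{n,k}$ side is entirely standard and poses no obstacle; only the $E_{n,k}$ dual-basis bookkeeping needs the Vandermonde collapse, which is the crux.
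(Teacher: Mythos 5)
Your strategy is the paper's own: apply the dual difference-operator functionals $\Delta_x^k(\cdot)\bigm|_{x=0}$ and $\nabla_x^{n+1}(\cdot)\bigm|_{x=k}$ to the defining expansions (\ref{eq:Sdef2}) and (\ref{eq:Edef2}), using Kronecker-delta relations on the basis polynomials. The $\hat S_{n,k}$ half is fine. But your treatment of $\nabla_x$ contains a genuine inconsistency. You invoke two incompatible facts about $\nabla_x^{n+1}$: that it annihilates every polynomial of degree~$n$ (your argument for vanishing when $k>n$), and that $\nabla_x^{n+1}\binom{x+n-k}{n}\bigm|_{x=k'}=\delta_{k,k'}$ (your delta property). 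If $\nabla_x^{n+1}$ really killed degree-$n$ polynomials at every point, it would kill $\binom{x+n-k}{n}$ and also $p(x)=(\beta x+r)^{\underline n,\alpha}$, and your extraction would yield $E_{n,k}\equiv 0$. The missing ingredient is the convention, stated explicitly in the paper, that $\nabla_x$ acts on functions of $x=0,1,2,\dots$ with $f$ set to zero at negative arguments; equivalently, the expansion $\nabla_x^{n+1}f\bigm|_{x=k}=\sum_{j}(-1)^j\binom{n+1}{j}f(k-j)$ is truncated to $j\le k$. Without that convention, your reindexed sum silently drops the terms at $x\in\{k-n-1,\dots,-1\}$, which are generically nonzero (there $\binom{n+1}{k-x}\ne0$ and $(\beta x+r)^{\underline n,\alpha}\ne0$), and your displayed claim that $\sum_{j=0}^{n+1}(-1)^j\binom{n+1}{j}\binom{x+n-k-j}{n}$ ``collapses to $[x=k]$'' is false if the binomials are read as polynomial values: by the very Vandermonde identity you cite, that full alternating sum equals $\binom{x-k-1}{-1}=0$ identically.

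Once the truncation is in place the repair is short, and your observation that $q_k(x)=\binom{x+n-k}{n}$ vanishes for integer $x\in\{0,\dots,k-1\}$ is exactly what is needed: for $k'<k$ every surviving term of the truncated sum evaluates $q_k$ at a point of $\{0,\dots,k-1\}$, so the sum is $0$; for $k'=k$ only the $j=0$ term survives, giving $q_k(k)=1$; and for $k'>k$ each dropped term evaluates $q_k$ at a negative integer lying in $\{k-n,\dots,k-1\}$, where its polynomial value vanishes, so the truncated sum agrees with the full $(n{+}1)$-st difference of a degree-$n$ polynomial, which is $0$. (Likewise, for $k>n$ the truncation is inactive since $k-j\ge k-n-1\ge0$, which legitimizes your degree argument there.) With that case analysis supplied, your proof coincides with the paper's.
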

\begin{proof}
Acting on functions of $x=0,1,2,\dots$, the forward and backward
difference operators, $\Delta_x$~and~$\nabla_x$, are defined by
$(\Delta_x f)(x) =\allowbreak f(x+\nobreak 1)-\nobreak f(x)$ and
$(\nabla_x f)(x) =\allowbreak f(x) -\nobreak f(x-\nobreak 1)$, with
$f(-1)$ taken to equal zero in the latter.  From
$\Delta_x^j\left[(x)^{\underline k}\right] = k^{\underline
  j}(x)^{\underline{k-j}}$, it follows readily that
\begin{subequations}
\begin{align}
  &\frac1{j!}\Delta_x^j\left[(x)^{\underline k}\right]\bigm|_{x=0} = \delta_{j,k},\label{eq:extract1}\\
  &\frac1{n!}\nabla_x^{n+1}\Bigl\{\left[ x+(n-k)\right]^{\underline n}\Bigr\}\bigm|_{x=j} = \delta_{j,k},
\label{eq:extract2}
\end{align}
\end{subequations}
where $\delta_{j,k}$ is the Kronecker delta.  These statements hold
for all $j,k,n\in\mathbb{N}$, with $k\le n$ assumed in the latter.
Applying (\ref{eq:extract1}),(\ref{eq:extract2}) to
(\ref{eq:Sdef2}),(\ref{eq:Edef2}) yields
(\ref{eq:Sform}),(\ref{eq:Eform}).
\end{proof}

Formulas (\ref{eq:Sform}) and~(\ref{eq:Eform}) are generalizations of
known ones for the Stirling subset and classical Eulerian
numbers~\cite{Graham94}. For each value of the row
index~$n\in\mathbb{N}$, they express $\hat S_{n,k} =
\beta^kk!\,S_{n,k}$ and~$E_{n,k}$, viewed as functions of the column
index $k=0,\dots,n$, as binomial transforms of $(\beta x+\nobreak
r)^{\underline n,\alpha}$, viewed as a function of~$x=0,\dots,n$.  In
particular, (\ref{eq:Sform})~says that
$\hat{\mathcal{S}}=\mathcal{V}\left({\mathcal{P}^{-1}}\right)^t$.
Having the factor $(-1)^{k-x}$ in their summands,
(\ref{eq:Sform})~and~(\ref{eq:Eform}) are properly called
\emph{inverse} binomial transforms.  According to the general theory
of binomial transforms~\cite{Boyadzhiev2018}, they are inverse to the
forward transforms (\ref{eq:Sdef2}) and~(\ref{eq:Edef2}), which
include no~such factor, and are their immediate corollaries.

In fact, corresponding rows of the triangles $\hat{\mathcal{S}}=[\hat
  S_{n,k}]$ and $\mathcal{E}=[E_{n,k}]$ are binomial transforms of
each other, so each triangle can be computed from the other.  The
following theorem, the $\alpha=0$ case of which has previously
appeared \cite{Mezo2016,Ramirez2018}, generalizes the known pair of
relations~\cite{Graham94} between the Stirling subset numbers
$\stirsub{n}k=S(n,k)=S_{n,k}(0,1;0)$ and the traditionally indexed
Eulerian numbers $A_{n,k}=\eulerian{n}{k-1}=E_{n,k}(0,1,0)$.  The
involutive ``rev'' transformation
$\mathcal{M}\mapsto{\mathcal{M}}^{\rm rev}$ is defined to produce from
any infinite lower-triangular matrix a like matrix in which each row
is reversed.  If $\mathcal{M}=[M_{n,k}]$, then ${\mathcal{M}}^{\rm
  rev}=[M^{\rm rev}_{n,k}]$ has $M^{\rm rev}_{n,k} = M_{n,n-k}$,
$k=0,\dots,n$.

\begin{theorem}
\label{thm:1}
For all\/ $n\ge0$ and\/ $\alpha,\beta,r$, one has
\begin{subequations}
  \begin{align}
    \hat S_{n,k} &= \sum_{j=0}^k \binom{n-j}{n-k} E_{n,j},\label{eq:EtoS}\\
    E_{n,k} &= \sum_{j=0}^k (-1)^{k-j} \binom{n-j}{n-k}\, \hat S_{n,j},\label{eq:StoE}
  \end{align}
\end{subequations}
which are a binomial transform and its inverse.  That is,
$\hat{\mathcal{S}}^{\rm{rev}} = {\mathcal{E}}^{\rm{rev}}\,\mathcal{P}$
and ${\mathcal{E}}^{\rm{rev}} =
\hat{\mathcal{S}}^{\rm{rev}}\,\mathcal{P}^{-1}$.
\end{theorem}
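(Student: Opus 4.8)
The plan is to read both (\ref{eq:Sdef2}) and (\ref{eq:Edef2}) as coordinate expansions of one and the same polynomial, $f_n(x)\defeq(\beta x+r)^{\underline n,\alpha}$, with respect to the two bases $\left[\binom{x}{k}\right]_{k=0}^n$ and $\left[\binom{x+n-k}{n}\right]_{k=0}^n$ of the space of polynomials of degree $\le n$. Relating the coefficient vectors is then just a matter of writing down the change-of-basis matrix between these two bases, which is supplied by Vandermonde's convolution: taking $a=x$, $b=n-k$, total $n$, one has $\binom{x+n-k}{n}=\sum_{i=0}^n\binom{x}{i}\binom{n-k}{n-i}=\sum_{i=k}^n\binom{n-k}{n-i}\binom{x}{i}$, the range restriction coming from $\binom{n-k}{n-i}=0$ unless $k\le i\le n$.

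First I would substitute this expansion into (\ref{eq:Edef2}) and interchange the order of summation, obtaining $f_n(x)=\sum_{k=0}^n E_{n,k}\binom{x+n-k}{n}=\sum_{i=0}^n\bigl(\sum_{k=0}^i\binom{n-k}{n-i}E_{n,k}\bigr)\binom{x}{i}$. Comparing with (\ref{eq:Sdef2}) and using the uniqueness of the expansion of $f_n$ in the basis $\left[\binom{x}{i}\right]_{i=0}^n$, one reads off $\hat S_{n,i}=\sum_{k=0}^i\binom{n-k}{n-i}E_{n,k}$, which after relabelling the indices is precisely (\ref{eq:EtoS}). Since the defining relations (\ref{eq:Sdef2}) and (\ref{eq:Edef2}) hold for all $\alpha,\beta,r$ (in particular $\beta=0$, where both bases remain bases), the conclusion is uniform in the parameters and no degenerate case needs separate treatment.

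Next I would recast this as the matrix identity. Writing $i=n-k$ and the summation index $k=n-j$ turns $\hat S_{n,i}=\sum\binom{n-k}{n-i}E_{n,k}$ into $\hat S^{\rm rev}_{n,k}=\sum_{j\ge k}\binom{j}{k}E^{\rm rev}_{n,j}$, i.e.\ $\hat{\mathcal{S}}^{\rm rev}={\mathcal{E}}^{\rm rev}\,\mathcal{P}$, since $\mathcal{P}=\left[\binom{x}{k}\right]$. The Pascal matrix is lower triangular with unit diagonal, hence invertible, with the standard inverse $\mathcal{P}^{-1}=\left[(-1)^{x-k}\binom{x}{k}\right]$; right-multiplying gives ${\mathcal{E}}^{\rm rev}=\hat{\mathcal{S}}^{\rm rev}\,\mathcal{P}^{-1}$, and unwinding the reversal yields (\ref{eq:StoE}) with the sign factor $(-1)^{k-j}$. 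Alternatively, once (\ref{eq:EtoS}) has been recognized — in the reversed indexing — as a binomial transform, the inverse formula follows at once from the general theory of binomial transforms~\cite{Boyadzhiev2018}.

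The only real obstacle is bookkeeping: keeping track of which index is free after the interchange of summations, getting the orientation of Vandermonde's convolution right so that $\binom{n-k}{n-i}$ (not $\binom{n-i}{n-k}$) appears before the reversal, and applying the reversal $k\mapsto n-k$ consistently to indices, summation variable, and summation range. There is no analytic difficulty, and neither induction nor the GKP recurrences of Theorem~\ref{thm:recurrences} are needed; the two defining expansions together with Vandermonde's identity suffice.
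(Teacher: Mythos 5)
Your proposal is correct and follows essentially the same route as the paper: both rest on the Vandermonde-convolution change of basis $\binom{x+n-k}{n}=\sum_{i=k}^{n}\binom{n-k}{n-i}\binom{x}{i}$ between the two polynomial bases of (\ref{eq:Sdef2}) and (\ref{eq:Edef2}), together with uniqueness of the expansion coefficients. Your derivation of (\ref{eq:StoE}) by inverting the Pascal matrix rather than by invoking the inverse identity (\ref{eq:elident2}) directly is only a cosmetic variation, since the paper itself records $\mathcal{P}^{-1}$ and frames the pair as a binomial transform and its inverse.
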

\begin{proof}
The basis functions (of~$x$) that appear in the original definitions
(\ref{eq:Sdef2}) and~(\ref{eq:Edef2}) are related by the identities
\begin{subequations}
\begin{align}
  &\binom{x+n-k}{n} = \sum_{j=k}^n \binom{n-k}{n-j}\binom{x}{j},\label{eq:elident1}\\
  &\binom{x}{k} = \sum_{j=k}^n (-1)^{j-k}\binom{n-k}{n-j}\binom{x+n-j}{n}.\label{eq:elident2}
\end{align}
\end{subequations}
The first is a version of Vandermonde's identity, which has an
elementary combinatorial proof; one can also view it as a binomial
transform, in which case the second is its inverse.  (It~will be
recalled that if $\mathcal{P} = \left[{\binom{x}k}\right]$, indexed
by~$x,k$, then $\mathcal{P}^{-1} =
\left[(-1)^{k-x}{\binom{k}x}\right]$, indexed by~$k,x$.)  To verify
(\ref{eq:EtoS}), substitute it into~(\ref{eq:Sdef2}) and use
(\ref{eq:elident1}) to reduce the double sum to a single sum; one
obtains~(\ref{eq:Edef2}).  To verify~(\ref{eq:StoE}), substitute it
into~(\ref{eq:Edef2}) and use (\ref{eq:elident2}) similarly; one
obtains~(\ref{eq:Sdef2}).
\end{proof}
\begin{remarkaftertheorem}
As $\hat{\mathcal{S}}^{\text{rev}} =
{\mathcal{E}}^{\text{rev}}\,\mathcal{P}$, the Vandermonde
decomposition~(\ref{eq:LU}) can be written as
\begin{equation}
  \mathcal{V}(a,b;\,r) = \left({\mathcal E}^\text{rev}\,\mathcal{P}\right)^{\text{rev}}
\mathcal{P}^t.
\end{equation}
This is the promised matrix-algebraic restatement of~(\ref{eq:Edef2}),
which defines the matrix $\mathcal{E}=\mathcal{E}(a,b;r)$.
Equivalently, ${\mathcal{E}}^{\rm rev} = \left[\mathcal{V}
  \left(\mathcal{P}^{-1}\right)^t\right]^{\rm rev}\mathcal{P}^{-1}$,
which is a restatement of the summation formula~(\ref{eq:Eform}).
\end{remarkaftertheorem}

\smallskip
One can work~out generating functions for the numbers $\hat S_{n,k}$
and~$E_{n,k}$, and triangular recurrences which they satisfy.  Define
the parametric row polynomials $\hat S_n(t)$ and~$E_n(t)$,
$n\in\mathbb{N}$, by
  \begin{equation}
    \hat S_n(t) = \sum_{k=0}^n \hat S_{n,k}\,t^k,\qquad
    E_n(t) = \sum_{k=0}^n E_{n,k}\,t^k,
    \end{equation}
and the parametric bivariate exponential generating functions (EGF's)
$\hat S(t,z)$ and $E(t,z)$ by
  \begin{equation}
    \hat S(t,z) = \sum_{n=0}^\infty\sum_{k=0}^n \hat S_{n,k}\,t^k\frac{z^n}{n!},\qquad
    E(t,z) = \sum_{n=0}^\infty\sum_{k=0}^n E_{n,k}\,t^k\frac{z^n}{n!}.
  \end{equation}
(Each depends on $\alpha,\beta,r$.)  It is not difficult to see that the pair
\begin{equation}
  \hat S_n(t) = (1+t)^nE_n\left(t/(1+t)\right),\qquad
  E_n(t) = (1-t)^n\hat S_n\left(t/(1-t)\right)
\end{equation}
and the pair
\begin{equation}
\label{eq:relationpair}
  \hat S(t,z) = E\left( t/(1+t), (1+t)z\right),\qquad
   E(t,z) = \hat S\left( t/(1-t), (1-t)z\right)
\end{equation}
are each equivalent to (\ref{eq:EtoS}),(\ref{eq:StoE}).

\begin{theorem}
\label{thm:fromvertical}
When\/ $\alpha\neq0$ and\/ $\beta\neq0$, one has the explicit EGF's
\begin{subequations}
\label{eq:SandEGF}
  \begin{align}
    \hat S(t,z) &=\frac{(1+\alpha z)^{r/\alpha}}{1-t\left[(1+\alpha z)^{\beta/\alpha}-1\right]}, \label{eq:SEGF} \\
    E(t,z) &= \frac{(1-t)(1+\alpha z-\alpha tz)^{r/\alpha}}{1-t(1+\alpha z-\alpha tz)^{\beta/\alpha}}.
    \label{eq:EEGF}
  \end{align}
\end{subequations}
{\rm(}Taking the $\alpha\to0$ limit is straightforward; the $\beta\to0$ limit is not
discussed here.{\rm)}
\end{theorem}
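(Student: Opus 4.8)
The plan is to derive the first EGF, (\ref{eq:SEGF}), directly from the inverse–binomial–transform summation formula~(\ref{eq:Sform}) of Theorem~\ref{thm:altdef1}, and then to obtain the second, (\ref{eq:EEGF}), by substituting into it the change of variables relating the two EGF's, namely $E(t,z)=\hat S\bigl(t/(1-t),\,(1-t)z\bigr)$ from~(\ref{eq:relationpair}) (which is equivalent to Theorem~\ref{thm:1}). Everything is done in the ring of formal power series in~$z$ whose coefficient of~$z^n$ is a polynomial in~$t$ of degree~$\le n$, namely the row polynomial~$\hat S_n(t)$ (resp.~$E_n(t)$) divided by~$n!$; in this ring, for each fixed power of~$z$ the sums over~$k$ and over an auxiliary index that appear are finite, and the geometric and generalized–binomial series in~$z$ converge $z$-adically because the base series involved have vanishing constant term.

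The one computational input, and the only place the hypothesis $\alpha\neq0$ is used, is the exponential generating function of a single row of the generalized Vandermonde matrix. Writing $(\beta x+r)^{\underline n,\alpha}=\alpha^{\,n}\bigl((\beta x+r)/\alpha\bigr)^{\underline n}$ with unit–stride falling factorial on the right and applying the binomial theorem $(1+w)^{u}=\sum_{n\ge0}(u)^{\underline n}w^n/n!$ with $u=(\beta x+r)/\alpha$ and $w=\alpha z$ gives
\begin{equation}
\sum_{n=0}^\infty(\beta x+r)^{\underline n,\alpha}\,\frac{z^n}{n!}=(1+\alpha z)^{(\beta x+r)/\alpha}=(1+\alpha z)^{r/\alpha}\,q^{\,x},\qquad q\defeq(1+\alpha z)^{\beta/\alpha}.
\end{equation}
Then I would insert $\hat S_{n,k}=\sum_{x=0}^k(-1)^{k-x}\binom{k}{x}(\beta x+r)^{\underline n,\alpha}$ from~(\ref{eq:Sform}) into the definition of~$\hat S(t,z)$, interchange $\sum_n$ with the double sum over $k$ and~$x$ (licit since at each order of~$z$ the $x$-sum is a finite difference of a polynomial in~$x$, hence the $k$-sum terminates), and apply the displayed identity; the inner sum over~$x$ collapses by the binomial theorem to $(q-1)^k$, so that $\hat S(t,z)=(1+\alpha z)^{r/\alpha}\sum_{k\ge0}t^k(q-1)^k$, and summing the geometric series yields exactly~(\ref{eq:SEGF}).

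For~(\ref{eq:EEGF}) I would substitute $t\mapsto t/(1-t)$ and $z\mapsto(1-t)z$ into~(\ref{eq:SEGF}) as licensed by~(\ref{eq:relationpair}). Then $1+\alpha z$ becomes $1+\alpha(1-t)z=1+\alpha z-\alpha tz$, and after multiplying numerator and denominator by~$1-t$ the denominator becomes $(1-t)-t\bigl[(1+\alpha z-\alpha tz)^{\beta/\alpha}-1\bigr]=1-t(1+\alpha z-\alpha tz)^{\beta/\alpha}$ while the numerator becomes $(1-t)(1+\alpha z-\alpha tz)^{r/\alpha}$, which is~(\ref{eq:EEGF}). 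I do not expect any real obstacle here: the only point requiring care is the formal bookkeeping that justifies the geometric- and binomial-series steps and the two substitutions (the latter being harmless because $(1-t)z\in z\,\mathbb{C}[t][[z]]$ and because, by the companion relation $E_n(t)=(1-t)^n\hat S_n\bigl(t/(1-t)\bigr)$, the outcome again has polynomial coefficients in~$t$). The parenthetical claim about the $\alpha\to0$ limit is then immediate, since $(1+\alpha z)^{r/\alpha}\to e^{rz}$ and $q\to e^{\beta z}$ coefficientwise, giving $\hat S(t,z)\to e^{rz}/\bigl(1-t(e^{\beta z}-1)\bigr)$ and likewise for~$E(t,z)$. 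One could instead derive~(\ref{eq:EEGF}) directly from~(\ref{eq:Eform}) by the same kind of collapse, but the $n$-dependent coefficient $\binom{n+1}{k-x}$ there makes that route appreciably messier, which is why I would route it through~(\ref{eq:SEGF}).
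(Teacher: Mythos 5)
Your proposal is correct and follows essentially the same route as the paper: both derive (\ref{eq:SEGF}) by combining the summation formula (\ref{eq:Sform}) with the expansion $\sum_n(\beta x+r)^{\underline n,\alpha}z^n/n!=(1+\alpha z)^{(\beta x+r)/\alpha}$, collapsing the inner sum to $\bigl[(1+\alpha z)^{\beta/\alpha}-1\bigr]^k$ (the paper phrases this as applying $\Delta_{r,\beta}^k$ to the univariate EGF, which is the same binomial collapse), and then summing the geometric series in $t$; and both obtain (\ref{eq:EEGF}) by the substitution from (\ref{eq:relationpair}).
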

\begin{proof}
Define the forward difference operator~$\Delta_{r,\beta}$, acting on
functions of~$r$, by $(\Delta_{r,\beta} f)(r) =\allowbreak f(r+\nobreak
\beta)-\nobreak f(r)$.  Then by~(\ref{eq:Sform}),
\begin{equation}
\hat S_{n,k}(\alpha,\beta;\,r) = \Delta_x^k\left[(\beta x+r)^{\underline n,\alpha}\right]\bigm|_{x=0}
= \Delta_{r,\beta}^k\left[(r)^{\underline n,\alpha}\right]
\end{equation}
which vanishes if $k>n$, so one has the ``vertical'' univariate EGF
\begin{equation}
\begin{aligned}
  &
  \sum_{n=k}^\infty \hat S_{n,k}(\alpha,\beta;\,r)\frac{z^n}{n!}
  =\Delta^k_{r,\beta}\left[
  \sum_{n=0}^\infty 
    (r)^{\underline n,\alpha}
  \frac{z^n}{n!}
  \right]
  =
  \Delta^k_{r,\beta}\left[
    (1+\alpha z)^{r/\alpha}
    \right]
  \\
  &
  \qquad=
  \sum_{j=0}^k(-1)^{k-j}\binom{k}j (1+\alpha z)^{(r+j\beta)/\alpha}
  =(1+\alpha z)^{r/\alpha}\left[(1+\alpha z)^{\beta/\alpha}-1\right]^k.
  \end{aligned}
\end{equation}
Multiplying this by $t^k$ and summing from $k=0$ to~$\infty$
yields the bivariate EGF~(\ref{eq:SEGF}), and (\ref{eq:EEGF}) comes by
applying~(\ref{eq:relationpair}).
\end{proof}

By direct calculation, $\hat S(t,z)$ and~$E(t,z)$ given by
(\ref{eq:SEGF}) and~(\ref{eq:EEGF}) satisfy the partial differential
equations (PDE's)
\begin{subequations}
\label{eq:PDEs}
  \begin{align}
    \frac{\partial\hat S}{\partial z}
    &= -\alpha z \frac{\partial\hat S}{\partial z} + \beta(1+t)t\frac{\partial \hat S}{\partial t} +(r+\beta t)\hat S,\\
    \frac{\partial E}{\partial z} &=
    \left[-\alpha+(\alpha+\beta)t\right]z\frac{\partial E}{\partial z} + \beta(1-t)t\frac{\partial E}{\partial t} + \left[r+(\beta-r)t\right]E.
  \end{align}
\end{subequations}

\begin{theorem}
\label{thm:recurrences}
The numbers\/ $\hat S_{n,k}$ and\/ $E_{n,k}$ satisfy 
\begin{subequations}
  \begin{align}
    \hat S_{n+1,k+1} &= \left[-\alpha n+\beta (k+1)+r\right] \hat S_{n,k+1} + \beta(k+1)\hat S_{n,k},\label{eq:SrecGKP}\\
    E_{n+1,k+1} &= \left[-\alpha n+\beta(k+1)+r\right] E_{n,k+1} + \left[(\alpha+\beta)n-\beta k+(\beta-r)\right] E_{n,k},\label{eq:ErecGKP}
  \end{align}
\end{subequations}
\vskip-\jot
\noindent
and the unmodified numbers $S_{n,k}$ satisfy
\begin{equation}
\label{eq:SGKPrec}
S_{n+1,k+1} = \left[-\alpha n+\beta (k+1)+r\right]  S_{n,k+1} +  S_{n,k},
\end{equation}
all three parametric recurrences being of the triangular, GKP type.
\end{theorem}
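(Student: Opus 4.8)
The plan is to obtain (\ref{eq:SrecGKP}) and (\ref{eq:ErecGKP}) by extracting the coefficient of $t^k z^n/n!$ from the two partial differential equations in (\ref{eq:PDEs}). In $\partial_z\hat S$ that coefficient is $\hat S_{n+1,k}$; in $z\,\partial_z\hat S$ it is $n\,\hat S_{n,k}$; in $t\,\partial_t\hat S$ it is $k\,\hat S_{n,k}$; and left-multiplication by $t$ sends $\hat S_{n,k}\mapsto\hat S_{n,k-1}$. Feeding these into the first PDE gives $\hat S_{n+1,k}=[-\alpha n+\beta k+r]\,\hat S_{n,k}+\beta k\,\hat S_{n,k-1}$, which is (\ref{eq:SrecGKP}) after relabelling $k\mapsto k+1$ (the two $\hat S_{n,k-1}$ contributions, from the $t^2\partial_t$ and the $\beta t$ terms, combine as $\beta(k-1)+\beta=\beta k$). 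The same bookkeeping applied to the second PDE, with the extra ingredient that the coefficient of $t^k z^n/n!$ in $z\,\partial_z E$ is $n\,E_{n,k}$, yields $E_{n+1,k}=[-\alpha n+\beta k+r]\,E_{n,k}+[(\alpha+\beta)n-\beta(k-1)+(\beta-r)]\,E_{n,k-1}$, i.e.\ (\ref{eq:ErecGKP}). These derivations are valid exactly where the EGF's of Theorem~\ref{thm:fromvertical}, and hence (\ref{eq:PDEs}), are, namely for $\alpha\neq0$ and $\beta\neq0$.

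Passing to arbitrary $\alpha,\beta,r\in\mathbb{C}$ is done by continuity: $\hat S_{n,k}$, $S_{n,k}$ and $E_{n,k}$ are continuous in $(\alpha,\beta,r)$ (as recorded after (\ref{eq:Edef2}) and (\ref{eq:Sdef1})), so both sides of (\ref{eq:SrecGKP}) and (\ref{eq:ErecGKP}) are continuous, and an identity between continuous functions that holds on the dense set $\{\alpha\beta\neq0\}$ holds throughout $\mathbb{C}^3$. Finally (\ref{eq:SGKPrec}) is purely algebraic once (\ref{eq:SrecGKP}) is known: for $\beta\neq0$ substitute $\hat S_{n,k}=\beta^k k!\,S_{n,k}$ and divide by $\beta^{k+1}(k+1)!$; the bracketed linear factor is untouched, the leading term becomes $S_{n+1,k+1}$, and the last term $\beta(k+1)\hat S_{n,k}=\beta^{k+1}(k+1)!\,S_{n,k}$ becomes $S_{n,k}$, giving (\ref{eq:SGKPrec}); continuity in $\beta$ then removes the restriction $\beta\neq0$.

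I expect no real difficulty in the above, since it is just coefficient comparison. The only place that would need genuine care is an alternative, generating-function-free derivation — useful if one wants the $\alpha=0$ or $\beta=0$ cases without a limiting argument — built directly on the closed sums (\ref{eq:Sform}) and (\ref{eq:Eform}). For $\hat S$ this route is short: write $(\beta x+r)^{\underline{n+1},\alpha}=(\beta x+r-\alpha n)\,(\beta x+r)^{\underline n,\alpha}$, apply the discrete Leibniz rule $\Delta_x^{m}(fg)=\sum_j\binom{m}{j}(\Delta_x^{j}f)\,(\Delta_x^{m-j}g)(\cdot+j)$ — only $j=0,1$ survive because the first factor is linear in $x$ — evaluate at $x=0$, and use $(\Delta_x^{k}g)(1)=(\Delta_x^{k}g)(0)+(\Delta_x^{k+1}g)(0)$; this reproduces (\ref{eq:SrecGKP}) for all parameters at once. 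The genuine obstacle is the $E$ case: in (\ref{eq:Eform}) the order of the backward difference also rises, from $\nabla_x^{n+1}$ to $\nabla_x^{n+2}$, so one must split $(\beta x+r)^{\underline{n+1},\alpha}$ so that exactly one $\nabla_x$ falls on a linear factor, re-express the residual $\nabla_x^{n+1}$ (now acting on a shifted argument, evaluated at $x=k$) in terms of $E_{n,k}$ and $E_{n,k+1}$, and collect terms — this is where the asymmetric coefficient $(\alpha+\beta)n-\beta k+(\beta-r)$ arises and where the computation is most delicate. Alternatively, (\ref{eq:ErecGKP}) can be pushed through from (\ref{eq:SrecGKP}) using the row-wise binomial transform of Theorem~\ref{thm:1}, at the cost of one Vandermonde-type summation.
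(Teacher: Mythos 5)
Your proof is correct and is essentially the paper's own argument: extract the coefficient of $t^kz^n/n!$ from the two PDE's in~(\ref{eq:PDEs}) to get (\ref{eq:SrecGKP}) and~(\ref{eq:ErecGKP}), then pass from $\hat S_{n,k}=\beta^kk!\,S_{n,k}$ to~(\ref{eq:SGKPrec}) with continuity handling $\beta=0$ (your explicit continuity step for extending beyond $\alpha\beta\neq0$ is a welcome bit of extra care, justified since each $\hat S_{n,k}$ and $E_{n,k}$ is a polynomial in the parameters). The alternative difference-operator route you sketch is not needed and is not what the paper does.
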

\begin{proof}
As $\hat S_{n,k}$, $E_{n,k}$ are the coefficients of $t^kz^n/n!$ in
$\hat S(t,z)$, $E(t,z)$, (\ref{eq:SrecGKP}) and~(\ref{eq:ErecGKP})
follow from~(\ref{eq:PDEs}) by equating coefficients.  If $\beta\neq0$,
(\ref{eq:SGKPrec})~follows from~(\ref{eq:SrecGKP}) by $\hat S_{n,k} =
\beta^k k!\, S_{n,k}$; and if $\beta=0$, by continuity.
\end{proof}

The recurrences (\ref{eq:SrecGKP}) and~(\ref{eq:ErecGKP}) can be used
as alternative definitions of the $\hat S_{n,k}$ and~$E_{n,k}$.  By
starting with the apex elements ${\hat S_{0,0}=1}$ and ${E_{0,0}=1}$,
and iterating, one can compute any desired $\hat S_{n,k}$
and~$E_{n,k}$, though the effort needed grows rapidly with $n$
and~$k$.  Each $\hat S_{n,k}$ and~$E_{n,k}$ will be a homogeneous
polynomial in~$\alpha,\beta,r$, of joint degree~$n$ and with integral
coefficients.  The useful reversal (or ``reflection'') identity
$\mathcal{E}^{\text{rev}}(\alpha,\beta;r) = \mathcal{E}(-\alpha,\beta;\beta-\nobreak r)$,
i.e.,
\begin{equation}
\label{eq:reflectionprop}
  E_{n,n-k}(\alpha,\beta;\,r) = E_{n,k}(-\alpha,\beta;\,\beta-r),
\end{equation}
follows from (\ref{eq:ErecGKP}) by substituting $n-\nobreak k$
for~$k$.  When $(\alpha,\beta;r)=(0,1;1)$, it reduces to the known reversal
identity $\eulerian{n}{n-k}=A_{n,k}$, i.e., the property
$\eulerian{n}{n-k}=\eulerian{n}{k-1}$ of the classical Eulerian
numbers~\cite{Graham94}.

$S_{n,k}$ is similarly a homogeneous polynomial in~$\alpha,\beta,r$
with integral coefficients, but it is of joint degree~$n-\nobreak k$,
as follows from the decomposition formula~(\ref{eq:elegant}) and also
from~(\ref{eq:SGKPrec}).  By applying (\ref{eq:SGKPrec}) and
(\ref{eq:SrecGKP}),(\ref{eq:ErecGKP}) repeatedly, one finds that the
elements on the left edges of these triangles are $S_{n,0} = \hat
S_{n,0} = E_{n,0} = (r)^{\underline n,\alpha}$, and those on the right
edges are $S_{n,n}=1$, $\hat S_{n,n}=\beta^nn!$, and
$E_{n,n}=(\beta-\nobreak r)^{\overline n,\alpha}$.

It is worth noting here that the
generalized Vandermonde factorization~(\ref{eq:LU}) can be written
in~terms of the unmodified matrix~$\mathcal{S}$ as
\begin{equation}
  \label{eq:LDU}
  \mathcal{V}(\alpha,\beta;\,r) = \mathcal{S}(\alpha,\beta;\,r)
  \mathcal{D}(\beta){\mathcal{P}}^t,
\end{equation}
where $\mathcal{D}(\beta)$ is the infinite diagonal matrix with
$(k,k)$ element equal to~$\beta^k/k!$.  This has the form of a
standard $\text{LDU}$ decomposition, because each diagonal element of
the $\text{L}$\nobreakdash-factor~$\mathcal{S}$ equals unity, as is
the case with the $\text{U}$\nobreakdash-factor~$\mathcal{P}^t$.  When
$(\alpha,\beta;r)$ equals $(0,1;0)$ and
$\mathcal{V}(\alpha,\beta;r)=\allowbreak \left[(\beta x+\nobreak
  r)^{\underline n,\alpha}\right]$ becomes the infinite Vandermonde
matrix $\left[x^n\right]$, indexed by $n,x\in\mathbb{N}$, it
specializes to the well-known Vandermonde factorization mentioned
above, in which $\mathcal{S}$~is the matrix of Stirling numbers of the
second kind~\cite{Cheon2001}.

\smallskip
Additional identities can be deduced from the bivariate EGF's
in~(\ref{eq:SandEGF}).  Again by direct calculation, they satisfy
difference equations on~$r$, viz.,
\begin{subequations}
\begin{gather}
\label{eq:Sdiffrec}
\begin{aligned}
  &\Delta_{r,\alpha}\hat S(t,z) = \alpha z\hat S(t,z),\qquad  t\Delta_{r,\beta}\hat S(t,z) = \hat S(t,z)-\hat S(0,z),
\end{aligned}
\\
\shortintertext{and}
\label{eq:Ediffrec}
\begin{aligned}
  \begin{aligned}
    &\Delta_{r,\alpha} E(t,z) = \alpha(1-t)z\,E(t,z),\\
    &\begin{aligned}
       t\Delta_{r,\beta}E(t,z) &= (1-t)\left[E(t,z) -E\left(0, (1-t)z\right)\right]
       \\
       &= (1-t)\left[E(t,z)-(1+\alpha z-\alpha tz)^{r/\alpha}\right],
     \end{aligned}
  \end{aligned}
\end{aligned}
\end{gather}
\end{subequations}
with (\ref{eq:Sdiffrec}) and~(\ref{eq:Ediffrec}) related to each other
by~(\ref{eq:relationpair}).  Equating coefficients yields the
following.

\begin{theorem}
The parametric triangles\/ $\hat{\mathcal S}=[\hat S_{n,k}]$ and\/
${\mathcal E}=[E_{n,k}]$ satisfy the difference equations
\begin{subequations}
\label{eqs:Sdiffonr}
\begin{gather}
  \begin{aligned}
    & \Delta_{r,\alpha} \hat S_{n+1,k}(\alpha,\beta;\,r) = \alpha(n+1)\hat S_{n,k}(\alpha,\beta;\,r),
    \label{eq:Sdiffonr}
    \\
    & \Delta_{r,\beta} \hat S_{n,k}(\alpha,\beta;\,r) = \hat S_{n,k+1}(\alpha,\beta;\,r),
  \end{aligned}
\\
\shortintertext{and}
  \begin{aligned}
    & \Delta_{r,\alpha} E_{n+1,k+1}(\alpha,\beta;\, r) = \alpha(n+1)\left[E_{n,k+1}-E_{n,k}\right](\alpha,\beta;\,r),
    \\
    & 
    \begin{aligned}
      \Delta_{r,\beta} E_{n,k}(\alpha,\beta;\,r) &= \left[E_{n,k+1}-E_{n,k}\right](\alpha,\beta;\,r) +(-1)^k \binom{n+1}{k+1} E_{n,0}(\alpha,\beta;\,r)   \\
      & = \left[E_{n,k+1}-E_{n,k}\right](\alpha,\beta;\,r) +(-1)^k \binom{n+1}{k+1}(r)^{\underline n,\alpha}
    \end{aligned}
  \end{aligned}
\label{eq:gaa}
\end{gather}
\end{subequations}
on the third parameter~$r$.
\end{theorem}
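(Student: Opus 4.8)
The plan is to read off both pairs of difference equations as coefficients in the bivariate EGF identities (\ref{eq:Sdiffrec}) and~(\ref{eq:Ediffrec}), which are already in hand. Since $t$ and $z$ are inert with respect to the third parameter, the operators $\Delta_{r,\alpha}$ and $\Delta_{r,\beta}$ act term by term on the expansions $\hat S(t,z)=\sum_{n,k}\hat S_{n,k}(\alpha,\beta;r)\,t^kz^n/n!$ and $E(t,z)=\sum_{n,k}E_{n,k}(\alpha,\beta;r)\,t^kz^n/n!$, so that the left-hand sides of (\ref{eq:Sdiffrec}) and~(\ref{eq:Ediffrec}) have $\Delta_{r,\alpha}\hat S_{n,k}$, $\Delta_{r,\beta}\hat S_{n,k}$, $\Delta_{r,\alpha}E_{n,k}$, and $\Delta_{r,\beta}E_{n,k}$ as their respective coefficients of $t^kz^n/n!$. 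It then remains to expand the right-hand sides in the same monomial basis and match.

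First I would treat the $\hat S$ equations. In $\alpha z\,\hat S(t,z)$ the factor $z$ raises the power of $z$ by one and contributes a factor $n+1$ once $z^{n+1}$ is written over $(n+1)!$; equating coefficients of $t^kz^{n+1}/(n+1)!$ in the first relation of (\ref{eq:Sdiffrec}) gives $\Delta_{r,\alpha}\hat S_{n+1,k}=\alpha(n+1)\hat S_{n,k}$. For the second relation, $\hat S(t,z)-\hat S(0,z)$ merely deletes the $k=0$ column, and after the reindexing $k\mapsto k+1$ it equals $\sum_{n,k}\hat S_{n,k+1}\,t^{k+1}z^n/n!$; matching its coefficient of $t^{k+1}z^n/n!$ with that of $t\,\Delta_{r,\beta}\hat S(t,z)=\sum_{n,k}(\Delta_{r,\beta}\hat S_{n,k})\,t^{k+1}z^n/n!$ yields $\Delta_{r,\beta}\hat S_{n,k}=\hat S_{n,k+1}$.

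The $E$ equations follow the same pattern, with one extra expansion. For the first relation of (\ref{eq:Ediffrec}), split $\alpha(1-t)z\,E(t,z)=\alpha z\,E(t,z)-\alpha tz\,E(t,z)$; the two terms contribute $\alpha(n+1)E_{n,k}$ and $\alpha(n+1)E_{n,k-1}$ to the coefficient of $t^kz^{n+1}/(n+1)!$, so after $k\mapsto k+1$ one obtains $\Delta_{r,\alpha}E_{n+1,k+1}=\alpha(n+1)[E_{n,k+1}-E_{n,k}]$. For the last relation I would use $(1+\alpha z-\alpha tz)^{r/\alpha}=\sum_{n\ge0}(r)^{\underline n,\alpha}(1-t)^nz^n/n!$, which is nothing but $E(0,(1-t)z)$ and is consistent with $E_{n,0}=(r)^{\underline n,\alpha}$; hence $(1-t)(1+\alpha z-\alpha tz)^{r/\alpha}=\sum_{n\ge0}(r)^{\underline n,\alpha}(1-t)^{n+1}z^n/n!$, and expanding $(1-t)^{n+1}$ by the binomial theorem the coefficient of $t^{k+1}$ is $(-1)^{k+1}\binom{n+1}{k+1}(r)^{\underline n,\alpha}$. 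The term $(1-t)E(t,z)$ contributes $E_{n,k+1}-E_{n,k}$ to the same coefficient, so equating with $t\,\Delta_{r,\beta}E(t,z)$ gives $\Delta_{r,\beta}E_{n,k}=[E_{n,k+1}-E_{n,k}]+(-1)^k\binom{n+1}{k+1}(r)^{\underline n,\alpha}$, and $(r)^{\underline n,\alpha}=E_{n,0}$ supplies the first displayed form.

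There is no real obstacle: the only points requiring care are the index shifts $n\mapsto n+1$ and $k\mapsto k+1$, so that every coefficient is extracted with respect to $t^kz^n/n!$ rather than $t^kz^n$, and the single binomial expansion of $(1-t)^{n+1}$ in the last equation. As a check I would verify that the two $E$ relations are the images of the two $\hat S$ relations under the substitution (\ref{eq:relationpair}) between $\hat S(t,z)$ and $E(t,z)$, which is precisely how (\ref{eq:Ediffrec}) was derived from (\ref{eq:Sdiffrec}).
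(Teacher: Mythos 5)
Your proposal is correct and is exactly the paper's argument: the paper derives the theorem by "equating coefficients" of $t^kz^n/n!$ in the EGF difference equations (\ref{eq:Sdiffrec}) and~(\ref{eq:Ediffrec}), and you have simply supplied the routine index-shift and binomial-expansion details that the paper leaves implicit.
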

These difference equations can be iterated, so as to increment or
decrement the parameter~$r$ of any $\hat{S}_{n,k}(\alpha,\beta;r)$ or
$E_{n,k}(\alpha,\beta;r)$ by any desired positive integer multiple of
$\alpha$~or~$\beta$.

One can also construct Newton series, which are finite-difference
analogues of Maclaurin or Taylor series.  For a function $f(r)$, the
Newton series at $r=0$ is
\begin{equation}
\label{eq:diffMacl}
  f(r) = \sum_{m=0}^\infty \frac{\Delta_{r,c}^mf(0)}{c^mm!} (r)^{\underline m,c},
\end{equation}
where $c\neq0$ is arbitrary.  It follows from (\ref{eq:Sdiffonr})
that
\begin{equation}
  \begin{aligned}
    & \Delta_{r,\alpha}^m \hat S_{n,k}(\alpha,\beta;\,0) = \alpha^m(n)^{\underline m}\,\hat S_{n-m,k}(\alpha,\beta;\,0),
    \\
    & \Delta_{r,\beta}^m \hat S_{n,k}(\alpha,\beta;\,0) = \hat S_{n,k+m}(\alpha,\beta;\,0),
  \end{aligned}
\end{equation}
and (\ref{eq:diffMacl}) therefore yields the formulas
\begin{equation}
  \begin{aligned}
    & 
    \hat{S}_{n,k}(\alpha,\beta;\,r) = \sum_{m=0}^{n-k} \binom{n}{m} \hat S_{n-m,k}(\alpha,\beta;\,0)\,(r)^{\underline m,\alpha},
    \\
    &
    \hat{S}_{n,k}(\alpha,\beta;\,r) = \sum_{m=0}^{n-k} \frac{1}{\beta^mm!} \hat S_{n,k+m}(\alpha,\beta;\,0)\,(r)^{\underline m,\beta},
  \end{aligned}
\label{eq:Shatsumformulas}
\end{equation}
it being assumed in the latter than $\beta\neq0$.  Thus from any row
or column of the matrix $\hat{\mathcal{S}}(\alpha,\beta;0)$, any
element of the corresponding row or column
of~$\hat{\mathcal{S}}(\alpha,\beta;r)$ can be computed as a finite
sum.  In~terms of the unmodified numbers $S_{n,k}(\alpha,\beta;r)$,
the formulas~(\ref{eq:Shatsumformulas}) become
\begin{equation}
  \begin{aligned}
    & 
    {S}_{n,k}(\alpha,\beta;\,r) = \sum_{m=0}^{n-k} \binom{n}{m} S_{n-m,k}(\alpha,\beta;\,0)\,(r)^{\underline m,\alpha},
    \\
    &
    {S}_{n,k}(\alpha,\beta;\,r) = \sum_{m=0}^{n-k} \binom{k+m}m   S_{n,k+m}(\alpha,\beta;\,0)\,(r)^{\underline m,\beta},
  \end{aligned}
\end{equation}
with no restriction on~$\beta$.  As $S_{n,k}(\beta,\beta;\,r) =
\binom{n}k (r)^{\underline{n-k},\beta}$ (see Theorem~\ref{thm:8formulas}
below), these can be seen as specializations of the matrix product
formula~(\ref{eq:Smultrule}).

\section{Stirling-number formulas and combinatorics}
\label{sec:stirlingformulas}

When deriving and interpreting operator ordering identities with
parametric matrix elements $S_{n,k}(\alpha,\beta;r)$ as coefficients,
it is desirable to be able to express them in closed form, and/or
interpret them combinatorially.  For certain choices of~$\alpha,\beta$
and~$r$, both are possible.

The classical Stirling numbers $S_{n,k}(0,1;0)\eqdef\stirsub{n}k$ and
$S_{n,k}(-1,0;0)\eqdef\stircyc{n}k$ have already been mentioned, and
from the recurrence~(\ref{eq:SGKPrec}) it can be proved by induction
that $S_{n,k}(0,1;1)=\stirsub{n+1}{k+1}$ and
$S_{n,k}(-1,0;1)=\stircyc{n+1}{k+1}$.  In~fact when $r\in\mathbb{N}$,
the so-called $r$\nobreakdash-Stirling numbers
$S_{n,k}(0,1;r)\eqdef{\stirsub{n}k}_r$ and
$S_{n,k}(-1,0;r)\eqdef{\stircyc{n}k}_r$ have combinatorial
interpretations~\cite{Broder84,Maier28}.  For instance,
${\stirsub{n}{k}}_r$ counts the number of partitions of an
$(n+\nobreak r)$\nobreakdash-set into $k+\nobreak r$ blocks, such that
$r$~distinguished elements are placed in distinct blocks.  But the
focus in this section is on triples $\alpha,\beta,r$ for which a
closed-form expression for $S_{n,k}(\alpha,\beta;r)$ exists, obviating
the need for an iterative or recursive calculation.  For deriving
additional expressions, homogeneity should be recalled:
$S_{n,k}(\lambda \alpha,\lambda \beta;\lambda r)$ equals
$\lambda^{n-k}S_{n,k}(\alpha,\beta;r)$.

\begin{theorem}
\label{thm:8formulas}
One has the formulas
\begin{itemize}
 \item[{\rm (i)}] $S_{n,k}(1,1;\,0) = \delta_{n,k}$,
 \item[{\rm (ii)}]  $S_{n,k}(-1,1;\,0) = \binom{n}k (k)^{\overline{n-k}}$,
 \item[{\rm (iii)}] $S_{n,k}(1,2;\,0) = 2^{-(n-k)}\,\frac{n!}{k!}\binom{k}{n-k}$,
 \item[{\rm (iv)}] $S_{n,k}(-2,-1;\,0) =2^{-(n-k)}\, \frac{(n)^{\overline{n-k}}\,(k)^{\overline{n-k}}}{(n-k)!}$,
\end{itemize}
which can be extended to\/ $r\in\mathbb{C}${\rm:}
\begin{itemize}
 \item[{\rm (i\,$^\prime$)}] $S_{n,k}(\beta,\beta;\,r) = \binom{n}k (r)^{\underline{n-k},\beta}$,
 \item[{\rm (ii\,$^\prime$)}]  $S_{n,k}(-\beta,\beta;\,r) = \binom{n}k (\beta k+r)^{\overline{n-k},\beta}$,
 \item[{\rm (iii\,$^\prime$)}] $S_{n,k}(1,2;\,r) = 2^{-(n-k)}\,\binom{n}{k} \, {}_2{\hat F}_1\Bigl[
   \begin{smallarray}{c}-(n-k),\:-r \\ -n+2k+1\end{smallarray}
     \Bigm|2\,
     \Bigr]$,
 \item[{\rm (iv\,$^\prime$)}] $S_{n,k}(-2,-1;\,r) = (-2)^{-(n-k)}\,\binom{n}k \, {}_2{\hat
   F}_1\Bigl[
   \begin{smallarray}{c}-(n-k),\:r-1 \\ -2n+k\end{smallarray}
     \Bigm|2\, \Bigr]$.
\end{itemize}
\end{theorem}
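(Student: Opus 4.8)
The plan is to prove the four special-value identities (i)--(iv) first, and then lift each to its general-$r$ companion (i$'$)--(iv$'$). For the $r=0$ cases I would argue by induction on the row index~$n$, using the unmodified GKP recurrence~(\ref{eq:SGKPrec}), namely $S_{n+1,k+1}=[-\alpha n+\beta(k+1)+r]\,S_{n,k+1}+S_{n,k}$, together with the apex value $S_{0,0}=1$ and the left-edge values $S_{n,0}=(r)^{\underline n,\alpha}$ stated in Section~\ref{sec:fundamentals}; these data determine every entry of the triangle, so it suffices to check that each proposed closed form reproduces them. In all four cases $\alpha\in\{1,-1,-2\}$, so $(0)^{\underline n,\alpha}=\delta_{n,0}$ and the left-edge check is immediate. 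Each recurrence check then collapses, after cancelling a common factorial or binomial prefactor, to an elementary identity such as Pascal's rule or $(k+1)\binom{n}{k+1}=(n-k)\binom{n}{k}$ for (i) and~(iii), or to a routine identity among rising factorials and binomial coefficients for (ii) and~(iv). Alternatively, (i) and~(ii) can be read off directly from the definition~(\ref{eq:Sdef1}) as instances of the $\beta$-strided Vandermonde convolution for $(x)^{\underline n,\beta}$ and $(x)^{\underline n,-\beta}=(x)^{\overline n,\beta}$.

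The same inductive device handles (i$'$) and~(ii$'$) with $r$ left arbitrary. One verifies that $\binom{n}{k}(r)^{\underline{n-k},\beta}$ satisfies~(\ref{eq:SGKPrec}) with $\alpha=\beta$---using $(r)^{\underline{n-k},\beta}=(r)^{\underline{n-k-1},\beta}\,[\,r-(n-k-1)\beta\,]$ and Pascal's rule, its left-edge value being $(r)^{\underline n,\beta}$ as required---and that $\binom{n}{k}(\beta k+r)^{\overline{n-k},\beta}$ satisfies it with $\alpha=-\beta$, where the reduction is again to $(k+1)\binom{n}{k+1}=(n-k)\binom{n}{k}$ after shifting the rising factorial, its left edge being $(\beta\cdot0+r)^{\overline n,\beta}=(r)^{\overline n,\beta}=(r)^{\underline n,-\beta}$. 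Putting $r=0$ recovers (i) and~(ii), consistently with homogeneity.

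For the two hypergeometric formulas I would descend from (iii) and~(iv) by translating in the parameter~$r$. The matrix product rule~(\ref{eq:Smultrule}), combined with~(i$'$), yields the two Newton-series summation formulas $S_{n,k}(\alpha,\beta;r)=\sum_{m=0}^{n-k}\binom{n}{m}S_{n-m,k}(\alpha,\beta;0)\,(r)^{\underline m,\alpha}$ and $S_{n,k}(\alpha,\beta;r)=\sum_{m=0}^{n-k}\binom{k+m}{m}S_{n,k+m}(\alpha,\beta;0)\,(r)^{\underline m,\beta}$, that is, $\mathcal{S}(\alpha,\beta;r)=\mathcal{S}(\alpha,\alpha;r)\,\mathcal{S}(\alpha,\beta;0)$ and $\mathcal{S}(\alpha,\beta;r)=\mathcal{S}(\alpha,\beta;0)\,\mathcal{S}(\beta,\beta;r)$. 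For (iii$'$), where $\alpha=1$, the first formula carries the $r$-dependent factor $(r)^{\underline m}$; substituting~(iii) for $S_{n-m,k}(1,2;0)$, pulling out the prefactor $2^{-(n-k)}\binom{n}{k}$, and rewriting $(r)^{\underline m}=(-1)^m(-r)^{\overline m}$ along with the surviving binomial--factorial quotients as products of Pochhammer symbols, the finite sum reorganizes termwise into $\sum_i(-(n-k))^{\overline i}(-r)^{\overline i}(-n+2k+1+i)^{\overline{n-k-i}}\,2^i/i!$, which by the definition of ${}_2\hat F_1$ in Section~\ref{sec:label} is precisely the series in item~(iii$'$). For (iv$'$), where $\beta=-1$, the \emph{second} summation formula is the right one, its $r$-dependent factor being $(r)^{\overline m}$; feeding in~(iv) and carrying out the analogous Pochhammer bookkeeping---here also invoking $(r-1)^{\overline i}=(r)^{\overline i}-i\,(r)^{\overline{i-1}}$ to shift the second parameter to $r-1$---produces the claimed ${}_2\hat F_1$ with lower parameter $-2n+k$ and argument~$2$.

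The routine parts are the recurrence verifications and the standard binomial identities behind them. The one real source of friction is the bookkeeping in (iii$'$) and~(iv$'$): one must pick, in each case, the summation formula whose stride---$\alpha$ for (iii$'$), $\beta$ for (iv$'$)---equals $\pm1$, so that the resulting series is genuinely a stride-one Gauss-type series; then pin down all three ${}_2\hat F_1$ parameters and the sign conventions exactly; and finally confirm that the finite sum produced by the translation formula matches $(c)^{\overline N}\,{}_2F_1$ and not $\Gamma(c)^{-1}\,{}_2F_1$, which is exactly why the desingularized ${}_2\hat F_1$ must appear---the lower parameters $-n+2k+1$ and $-2n+k$ being nonpositive integers for many~$n,k$. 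As a sanity check, $\binom{k}{n-k}$ vanishes unless $2k\ge n$, so~(iii) is supported on $\lceil n/2\rceil\le k\le n$, and the ${}_2\hat F_1$ of item~(iii$'$) does vanish outside that range when $r=0$.
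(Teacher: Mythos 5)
Your proposal is correct, and it splits naturally into a part that matches the paper and a part that does not. For (i)--(iv) and (i$'$),(ii$'$) you do exactly what the paper does: verify the closed forms against the GKP recurrence~(\ref{eq:SGKPrec}) (your added detail about the apex and the left edge $S_{n,0}=(r)^{\underline n,\alpha}$ just makes explicit why the recurrence check suffices). For (iii$'$),(iv$'$) your route is genuinely different: the paper imports these from \cite{Cheon2013} as restated in \cite{Maier28}, and offers as its in-house alternative the vertical-EGF argument of Theorem~\ref{thm:4formulas}, which leans on the Srivastava--Singhal generating function for Jacobi polynomials \cite{Srivastava73}. You instead bootstrap from the already-proved $r=0$ cases (iii),(iv) by translating in~$r$, using the two Newton-series formulas obtained from the matrix product rule~(\ref{eq:Smultrule}) together with~(i$'$) (so there is no circularity, since (i$'$) is established first by the recurrence). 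Your choice of which translation formula to use in each case --- the stride-$\alpha$ one for (iii$'$) since $\alpha=1$, the stride-$\beta$ one for (iv$'$) since $\beta=-1$ --- is exactly what makes the resulting sum a unit-stride Gauss-type series, and I checked that the termwise Pochhammer bookkeeping, including the shift $(r-1)^{\overline i}=(r)^{\overline i}-i\,(r)^{\overline{i-1}}$ needed in~(iv$'$), does reproduce the stated ${}_2\hat F_1$ coefficients. What your approach buys is self-containedness and elementarity: no external citation and no Jacobi-polynomial generating function, only the recurrence, the connection-coefficient product rule, and finite resummation. What it costs is that the computation is specific to these two parameter triples, whereas the EGF/Jacobi route of Theorem~\ref{thm:4formulas} handles (iii$'$),(iv$'$),(v),(vi) uniformly from a single generating-function identity.
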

\begin{proof}
  Formulas (i)--(iv), (i$^\prime$)--(ii$^\prime$) are verified by
  confirming that they satisfy the recurrence~(\ref{eq:SGKPrec}).
  Formulas equivalent to (iii$^\prime$),(iv$^\prime$) were derived
  in~\cite{Cheon2013} and restated in ${}_2F_1$ terms by the
  author~\cite{Maier28}; they are stated here in~terms of~${}_2\hat
  F_1$.  For an alternative derivation of
  (iii$^\prime$),(iv$^\prime$), see the proof of
  Theorem~\ref{thm:4formulas} below.
\end{proof}

Combinatorial interpretations of these $S_{n,k}(a,b;r)$ can be deduced
from the triangular recurrence~(\ref{eq:SGKPrec}), and a wealth of
information on these and other number triangles can be found in the
OEIS~\cite{OEIS2023}.
The $S_{n,k}(-1,1;0) \eqdef \hat L_{n,k}$ in~(ii) are the unsigned Lah
numbers.  The $S_{n,k}(1,2;0)\eqdef B_{n,k}$ in~(iii) are the Bessel
numbers of the second kind, and the $S_{n,k}(-2,-1;0)\eqdef \hat
b_{n,k}$ in~(iv) are the unsigned Bessel numbers of the first kind.
They appear respectively in the OEIS as \oeisseqnum{A271703},
\oeisseqnum{A122848} (reversed as \oeisseqnum{A100861}),
and~\oeisseqnum{A132062} (reversed as \oeisseqnum{A001498}).  The
formulas (ii),(iii),(iv) are equivalent to those given by
Comtet~\cite[p.~158]{Comtet74}.  When $n\ge1$, the formula~(iv) can be
rewritten in~terms of factorials in the form that appears in
eq.~(\ref{eq:sampleappl}) of the introduction.

These three triangles are the $r=0$ cases of the more general ones in
(ii$^\prime$),(iii$^\prime$),(iv$^\prime$), which have combinatorial
interpretations when $r\in\mathbb{N}$.

\begin{itemize}
\item The $S_{n,k}(-1,1;r)\eqdef \hat L_{n,k}^{(r)}$ in~(ii$^\prime$)
  are what will be called the unsigned $r$\nobreakdash-Lah numbers,
  which when $r$~is even, count the partitions of a set with
  $(n+\nobreak r/2)$ elements into $k+\nobreak r/2$ ordered blocks or
  lists, such that $r/2$~distinguished elements of the set are placed
  in distinct lists~\cite{Nyul2015}. (Unsignedness is indicated by a
  hat; the signed versions are $S_{n,k}(1,-1;-r)\eqdef
  L_{n,k}^{(r)}$.)  Notably, $\hat L_{n,k}^{(2)}= \hat
  L_{n+1,k+1}^{(0)}=\hat L_{n+1,k+1}$.  The triangles $\hat
  L_{n,k}^{(r)}$ with $r=1,2,3,4$ appear separately as
  \oeisseqnum{A021009} (signed), \oeisseqnum{A105278},
  \oeisseqnum{A062139} (signed), \oeisseqnum{A143497}.
\item The $S_{n,k}(1,2;r) \eqdef B_{n,k}^{(r)}$ in~(iii$^\prime$) are
  the $r$\nobreakdash-Bessel numbers of the second kind, which count
  the partitions of an $(n+\nobreak r)$\nobreakdash-set into
  $k+\nobreak r$ blocks of size $1$~or~$2$, such that
  $r$~distinguished elements of the set are placed in distinct
  blocks~\cite{Cheon2013,Jung2018}.  (This implies that when
  $r\in\mathbb{N}$, $S_{n,k}(1,2;r)$~is nonzero if and only if $0\le
  n-\nobreak k \le \left\lfloor\frac{n+r}2\right\rfloor$.)  Notably,
  $B_{n,k}^{(1)}= B_{n+1,k+1}^{(0)}=B_{n+1,k+1}$.  The triangles
  $B_{n,k}^{(r)}$ with $r=1,2$ appear separately as
  \oeisseqnum{A049403}, \oeisseqnum{A330209}.
\item The $S_{n,k}(-2,-1;r) \eqdef \hat b_{n,k}^{(r)}$
  in~(iv$^\prime$) are the unsigned $r$\nobreakdash-Bessel numbers of
  the first kind, which count certain so-called
  G\nobreakdash-permutations of a multiset depending on $n,k$
  and~$r$~\cite{Cheon2013}.  More simply, $\hat b_{n,k}^{(0)}= \hat
  b_{n,k}$ counts $n$\nobreakdash-vertex rooted plane forests composed
  of $k$~rooted plane trees, the vertices being labeled by
  $1,\dots,n$, such that each of the $k$~trees is ``increasing'' in
  that the labels on its vertices increase as one moves away from its
  root~\cite{Bergeron92}.  Notably, $\hat b_{n,k}^{(1)}= \hat
  b_{n+1,k+1}^{(0)}=\hat b_{n+1,k+1}$.  The triangle $\hat
  b_{n,k}^{(r)}$ with $r=1$ appears separately as
  \oeisseqnum{A001497}.
\end{itemize}

Additional expressions are given in the following theorem and
conjecture.

\begin{theorem}
\label{thm:4formulas}
For all\/ $r\in\mathbb{C}$,
\begin{itemize}
\item[{\rm (v)}] $S_{n,k}(-1,2;\,r) = 2^{-(n-k)}\, \binom{n}{k}\,
{}_2{\hat F}_1\Bigl[
   \begin{smallarray}{c}-(n-k),\:-n-r+1 \\ -n+2k+1\end{smallarray}
     \Bigm|2\,
     \Bigr]$,
\item[{\rm (vi)}] $S_{n,k}(-2,1;\,r) = 2^{-(n-k)}\, \binom{n}{k}\,
{}_2{\hat F}_1\Bigl[
   \begin{smallarray}{c}-(n-k),\:-2n-r+1 \\ -2n+k\end{smallarray}
     \Bigm|2\,
     \Bigr]$.
\end{itemize}
\end{theorem}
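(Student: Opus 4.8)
The plan is to obtain the new formulas (v),(vi), and along the way an alternative derivation of (iii$^\prime$),(iv$^\prime$), from the explicit generating functions of Theorem~\ref{thm:fromvertical}, which are available here since $\alpha,\beta\neq0$ in each of the four relevant cases $(\alpha,\beta)=(1,2),(-1,2),(-2,1),(-2,-1)$. First I would extract the coefficient of~$t^k$ from the bivariate EGF~(\ref{eq:SEGF}): since $\hat S(t,z)$ is $(1+\alpha z)^{r/\alpha}$ divided by $1-t\bigl[(1+\alpha z)^{\beta/\alpha}-1\bigr]$, this yields the ``column'' generating function $\sum_{n\ge k}\hat S_{n,k}(\alpha,\beta;r)\,z^n/n! = (1+\alpha z)^{r/\alpha}\bigl[(1+\alpha z)^{\beta/\alpha}-1\bigr]^k$. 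The task is then to read off the coefficient of~$z^n$ and identify it with $\beta^kk!$ times the stated right-hand side, recalling $\hat S_{n,k}=\beta^kk!\,S_{n,k}$.

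In the cases $(\alpha,\beta)=(1,2)$ and $(-1,2)$ the exponent $\beta/\alpha$ is an integer, so the bracket is rational: $\bigl[(1+z)^2-1\bigr]^k = z^k(z+2)^k$ and $\bigl[(1-z)^{-2}-1\bigr]^k = z^k(2-z)^k(1-z)^{-2k}$, whence the column generating functions are $(1+z)^r z^k(z+2)^k$ and $z^k(2-z)^k(1-z)^{-(2k+r)}$. Expanding binomially and collecting the coefficient of~$z^n$ yields a finite sum over one index; substituting the complementary index and using $(c)^{\overline N}/(c)^{\overline j}=(c+j)^{\overline{N-j}}$ recasts it as a terminating series at argument~$2$, which --- after at most one elementary transformation of terminating series --- is the stated ${}_2\hat F_1$. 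The $(1,2;r)$ calculation is precisely~(iii$^\prime$). In the cases $(\alpha,\beta)=(-2,1)$ and $(-2,-1)$ the exponent is $\mp\frac12$, but the binomial theorem still gives $\bigl[(1-2z)^{\pm1/2}-1\bigr]^k = \sum_{i=0}^k\binom ki(-1)^{k-i}(1-2z)^{\pm i/2}$, so after multiplying by $(1-2z)^{-r/2}$ the column generating function is a short sum of powers of $1-2z$; extracting~$[z^n]$ and identifying the result proceeds as before, and the $(-2,-1;r)$ calculation is~(iv$^\prime$).

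An alternative, entirely self-contained route --- in the spirit of the proof of Theorem~\ref{thm:8formulas} --- is to verify directly that each claimed closed form satisfies the GKP recurrence~(\ref{eq:SGKPrec}) for the relevant $(\alpha,\beta;r)$ together with the left-edge value $S_{n,0}=(r)^{\underline n,\alpha}$, which together determine $\mathcal S(\alpha,\beta;r)$ uniquely. After dividing by the prefactor $(\pm2)^{-(n-k)}\binom nk$, the recurrence becomes a three-term contiguous relation among ${}_2\hat F_1$'s with parameters differing by integer shifts, checkable by equating coefficients of $z^j=2^j$, i.e.\ the products $(-N)^{\overline j}(b)^{\overline j}(c+j)^{\overline{N-j}}/j!$.

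On either route the main obstacle is the hypergeometric bookkeeping when the lower parameter is a nonpositive integer. In the generating-function route the sum extracted from~$[z^n]$ terminates naturally at $\min(k,n-k)$, whereas the target series with upper parameter $-(n-k)$ is written as a sum up to~$n-k$; the two agree because, when $n-k>k$, the lower parameter $-n+2k+1$ (respectively $-2n+k$) is a nonpositive integer and the factor $(c+j)^{\overline{n-k-j}}$ annihilates the leading terms --- exactly the situation for which ${}_2\hat F_1$ was introduced. Putting the upper parameter into the stated form, moreover, calls for the reversal of a terminating series and/or Pfaff's transformation $z\mapsto z/(z-1)$, for which the value $z=2$ appearing in all these formulas is the fixed point; these classical identities must be invoked in desingularized form, which is legitimate by continuity in the lower parameter. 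Beyond this the work is routine Pochhammer algebra.
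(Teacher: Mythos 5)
Your strategy---checking the claimed closed forms against the column EGF $\sum_{n\ge k}\hat S_{n,k}\,z^n/n!=(1+\alpha z)^{r/\alpha}\bigl[(1+\alpha z)^{\beta/\alpha}-1\bigr]^k$---targets the same identity as the paper, but the paper travels in the opposite direction with a different tool: it recognizes each claimed coefficient as a Jacobi polynomial $P_{n-k}^{(\alpha-\lambda(n-k),\,\beta-\mu(n-k))}(-3)$ with degree-dependent parameters and sums over $n$ via the Srivastava--Singhal generating function. Your coefficient-extraction direction does go through for~(v): there $\beta/\alpha=-2$ is an integer, the bracket equals the rational function $z^k(2-z)^k(1-z)^{-2k}$, and $[z^{n-k}]$ of $(2-z)^k(1-z)^{-2k-r}$ is a genuine ${}_2F_1$-type sum (term ratio rational of type $(2,2)$ in the index), which series reversal followed by Pfaff at the fixed point $z=2$ converts to the stated form; your use of the desingularized ${}_2\hat F_1$ and continuity in the lower parameter correctly covers the degenerate cases.

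The gap is in~(vi) (and likewise in the claimed rederivation of~(iv$'$)). There $\beta/\alpha=-\tfrac12$, and your plan---expand $\bigl[(1-2z)^{-1/2}-1\bigr]^k=\sum_{i}\binom{k}{i}(-1)^{k-i}(1-2z)^{-i/2}$ and extract $[z^n]$ termwise---merely reproduces the summation formula~(\ref{eq:Sform}), namely $\hat S_{n,k}=\sum_{i=0}^k(-1)^{k-i}\binom{k}{i}(i+r)^{\underline{n},-2}$, since $n!\,[z^n](1-2z)^{-(i+r)/2}=(i+r)^{\underline{n},-2}$. That sum is \emph{not} a ${}_2F_1$ in the index~$i$: the ratio of consecutive terms contains $\prod_{m=0}^{n-1}\frac{i+r+2m+1}{i+r+2m}$, a rational function with $n$ numerator and $n$ denominator roots interlacing by odd integers with no cancellation, so no reversal, Pfaff, or Euler transformation of terminating ${}_2F_1$'s can identify it with the stated $2^{-(n-k)}\binom{n}{k}\,{}_2\hat F_1(-(n-k),-2n-r+1;-2n+k;2)$. ``Identifying the result proceeds as before'' therefore fails at exactly the half-integer-exponent cases, which include one of the two formulas being proved. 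Closing this requires an additional idea --- for example the substitution $w=(1-2z)^{1/2}$, under which the column EGF becomes $w^{-r-k}(1-w)^k$ and a Lagrange-inversion/residue computation of $[z^n]$ yields a two-parameter sum at argument~$2$ --- or the paper's Jacobi-polynomial device. Your fallback route via the GKP recurrence~(\ref{eq:SGKPrec}) could in principle substitute, but not as described: the three series entering the recurrence have lengths $n-k$ and $n-k+1$ with independently shifted parameters, so their terms do not align and one cannot simply equate coefficients of~$2^j$; and the boundary column $k=0$ would require separately proving ${}_2\hat F_1(-n,-2n-r+1;-2n;2)=2^n(r)^{\underline{n},-2}$, itself a nontrivial evaluation at $z=2$.
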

\begin{proof}
  Each of (v),(vi), and in~fact (iii$^\prime$),(iv$^\prime$), can be
  proved by showing that it yields the correct vertical univariate
  EGF, which was found in the proof of Theorem~\ref{thm:fromvertical}
  to be
  \begin{equation}
    \sum_{n=k}^\infty \hat S_{n,k}(\alpha,\beta;\,r)
    \frac{z^n}{n!}
    =(1+\alpha z)^{r/\alpha} \left[ (1+\alpha z)^{\beta/\alpha} - 1 \right]^k,
  \end{equation}
  where $\hat S_{n,k} = \beta^kk!\,S_{n,k}$.  In doing this, a useful
  lemma is the Srivastava--Singhal generating function for Jacobi
  polynomials~\cite{Srivastava73},
  \begin{equation}
    \sum_{m=0}^\infty
    P_m^{(\alpha-\lambda m,\beta-\mu m)}(x) \,z^m =
    \frac{(1+\xi)^{\alpha+1}\,(1+\eta)^{\beta+1}}{1+\lambda\xi + \mu\eta - (1-\lambda-\mu)\xi\eta}\,,
  \end{equation}
  in which the degree-$m$ Jacobi polynomial $P_m^{(\alpha,\beta)}(x)$
  is defined as usual by
  \begin{equation}
    P_m^{(\alpha,\beta)}(x)=\frac1{m!}\,
    {}_2{\hat F}_1
    \Bigl[
           \begin{smallarray}{c}-m,\:m+\alpha+\beta+1 \\ \alpha+1\end{smallarray}
             \Bigm|\frac{1-x}2\,
             \Bigr],
  \end{equation}
  and $\xi,\eta$ are defined implicitly as functions~of (or~strictly,
  as formal power series~in)~$z$ by
  \begin{subequations}
  \begin{align}
    \xi &= \tfrac12 (x+1)z\, (1+\xi)^{1-\lambda}(1+\eta)^{1-\mu},\\
    \eta &= \tfrac12 (x-1)z\, (1+\xi)^{1-\lambda}(1+\eta)^{1-\mu}.
  \end{align}
  \end{subequations}
  One sets $m=n-k$ and $x=-3$.  To prove (iii$^\prime$),(iv$^\prime$)
  and (v),(vi), one lets the tuple
  $(\lambda,\mu;\allowbreak\alpha,\beta)$ equal $(1,0;\allowbreak
  k,-k-\nobreak r-\nobreak1)$, $(2,-1; \allowbreak
  -k-\nobreak1,\allowbreak k+\nobreak r-\nobreak1)$ and
  $(1,1;\allowbreak k,-2k-\nobreak r)$, $(2,1; -k-\nobreak
  1,-k-\nobreak r+\nobreak1)$, respectively.
\end{proof}

The following formula for $\hat S_{n,k}(-1,2;r)=2^{k}k!\,  
S_{n,k}(-1,2;r)$ was found heuristically and may merit further study.
\begin{conjecture}
  For all $r\in\mathbb{Z}$,
  \begin{displaymath}
    \hat S_{n,k}(-1,2;r)
    =
    \sum_{j=\lfloor(2-r)/2\rfloor}^{\lfloor (n+2-r)/2\rfloor}
    \binom{n-j}{n-k}\,n!\binom{n+1}{2j+r-1}.
  \end{displaymath}
\end{conjecture}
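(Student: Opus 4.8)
The plan is to reduce the conjecture to a single coefficient-extraction identity. I would first establish, for every $r\in\mathbb C$, the closed form
\begin{equation}
\hat S_{n,k}(-1,2;\,r)=n!\,\bigl[u^{\,n-k}\bigr]\bigl\{(2+u)^k(1+u)^{n+r-1}\bigr\},
\tag{$\ast$}
\end{equation}
where $\bigl[u^{m}\bigr]$ extracts the coefficient of $u^{m}$; this is just a reindexing of formula~(v) of Theorem~\ref{thm:4formulas}. To prove $(\ast)$ directly, specialize the vertical univariate EGF obtained in the proof of Theorem~\ref{thm:fromvertical} to $(\alpha,\beta)=(-1,2)$, getting $\sum_{n\ge k}\hat S_{n,k}(-1,2;r)\,z^n/n!=(1-z)^{-r}\bigl[(1-z)^{-2}-1\bigr]^k=z^k(2-z)^k(1-z)^{-(2k+r)}$. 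Under the formal substitution $w=(1-z)^{-1}$ (so $w(0)=1$, $z=1-w^{-1}$, $2-z=(w+1)/w$, $dz=w^{-2}\,dw$) this rational function becomes $(w^2-1)^k w^{r}$, and the coefficient of $z^n$ is then the residue at $w=1$ of $\frac{(w+1)^k w^{n+r-1}}{(w-1)^{n-k+1}}$, i.e.\ $\bigl[(w-1)^{\,n-k}\bigr]\bigl\{(w+1)^k w^{n+r-1}\bigr\}$; writing $w=1+u$ gives $(\ast)$.

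Next I would show that the right-hand side of the conjecture collapses to the same expression. Read it as a sum over all $j\in\mathbb Z$: the factor $\binom{n+1}{2j+r-1}$ vanishes unless $0\le 2j+r-1\le n+1$, which for integer $j$ is exactly $\lfloor(2-r)/2\rfloor\le j\le\lfloor(n+2-r)/2\rfloor$, and $\binom{n-j}{n-k}$ is read as $(n-j)^{\underline{n-k}}/(n-k)!$ when $n-j\notin\mathbb N$. With $\binom{n-j}{n-k}=[v^{n-k}](1+v)^{n-j}$, the conjectured sum equals $n!\,[v^{n-k}]\bigl\{(1+v)^n\sum_{j}(1+v)^{-j}\binom{n+1}{2j+r-1}\bigr\}$. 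Set $\zeta=(1+v)^{-1/2}$; since $r\in\mathbb Z$, the integers $2j+r-1$ ($j\in\mathbb Z$) form a full residue class mod $2$, so $\sum_j\binom{n+1}{2j+r-1}\zeta^{2j}=\zeta^{1-r}\cdot\tfrac12\bigl[(1+\zeta)^{n+1}-(-1)^r(1-\zeta)^{n+1}\bigr]$. Using $\zeta(1+v)^{1/2}=1$ to clear fractional powers and putting $w=(1+v)^{1/2}$, the quantity inside the coefficient extraction becomes $\tfrac12 n!\,w^{n+r-2}\bigl[(w+1)^{n+1}-(-1)^r(w-1)^{n+1}\bigr]$; extracting $[v^{n-k}]$ via the substitution $v=w^2-1$ exactly as in the first step (same branch, $w(0)=1$) turns the conjectured sum into
\[
\bigl[(w-1)^{\,n-k}\bigr]\Bigl\{\,n!\,w^{n+r-1}(w+1)^k\;-\;(-1)^r n!\,w^{n+r-1}(w-1)^{n+1}(w+1)^{-(n-k+1)}\Bigr\}.
\]
The second term is $(w-1)^{n+1}$ times a function analytic at $w=1$, so it contributes nothing (since $n+1>n-k$), and the first term is $n!$ times the right-hand side of $(\ast)$. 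This proves the conjecture.

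I expect the only real obstacle to be bookkeeping rather than ideas: making the two substitutions ($w=(1-z)^{-1}$, then $v=w^2-1$) rigorous as formal-power-series or residue manipulations --- the second involves a square root, so one must consistently take the branch with $w(0)=1$, noting that although both $w=\pm1$ map to $v=0$ the relevant contour loops only $w=1$ --- and being scrupulous about the binomial-coefficient conventions needed for the ``extra'' indices $j$, for which $n-j$ may be negative or exceed $n$. A generating-function--free alternative would first verify by induction on $n$, using the GKP recurrence~\eqref{eq:SrecGKP} at $(\alpha,\beta)=(-1,2)$ together with the edge value $\hat S_{n,0}=(r)^{\overline n}$, that $\hat S_{n,k}(-1,2;r)=n!\sum_{i=0}^k\binom{k}{i}\binom{n+r-1+i}{n-k}$ --- the binomial expansion of $(\ast)$ --- and then prove the Chu--Vandermonde-type identity $\sum_{i=0}^k\binom{k}{i}\binom{n+r-1+i}{n-k}=\sum_{j}\binom{n-j}{n-k}\binom{n+1}{2j+r-1}$ for $r\in\mathbb Z$ separately; the route above is preferable because the coefficient extractions absorb the unusual summation range automatically.
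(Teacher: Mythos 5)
This statement is left \emph{open} in the paper: it appears only as a conjecture, described as ``found heuristically,'' and no proof is supplied anywhere in the text. So there is no argument of the paper's to compare yours against; the only question is whether your argument stands on its own, and after checking it I believe it does. The closed form $(\ast)$ does follow from the vertical EGF $\sum_{n\ge k}\hat S_{n,k}(-1,2;r)\,z^n/n! = z^k(2-z)^k(1-z)^{-(2k+r)}$ of Theorem~\ref{thm:fromvertical} via the substitution $w=(1-z)^{-1}$, which is a legitimate formal residue change of variables because $z=(w-1)/w$ vanishes to first order at $w=1$; writing $2+u=1+(1+u)$ also reconciles $(\ast)$ with the binomial form $n!\sum_i\binom{k}{i}\binom{n+r-1+i}{n-k}$ you propose for the inductive alternative. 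On the other side, the stated summation limits are exactly the integers $j$ with $0\le 2j+r-1\le n+1$, so extending the sum over all $j\in\mathbb Z$ adds only vanishing terms; the parity bisection carries the correct sign $-(-1)^r$; and after the substitution $v=w^2-1$ (again with nonvanishing derivative at $w=1$, so only that preimage of $v=0$ contributes to the residue) the discarded term carries an explicit factor $(w-1)^{n+1}$ against a function regular at $w=1$, which kills its $(w-1)^{n-k}$-coefficient since $n+1>n-k$. I also spot-checked the identity numerically in a case where $n-j<0$ occurs ($n=1$, $k=0$, $r=-2$ gives the single term $\binom{-1}{1}\binom{2}{1}=-2=\hat S_{1,0}(-1,2;-2)$), which confirms both the conjecture and the fact that the convention $\binom{n-j}{n-k}=(n-j)^{\underline{n-k}}/(n-k)!$ is the one the statement requires.

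Two minor points. First, the last term of your final display already contains the factor $n!$, so it \emph{is} the right-hand side of $(\ast)$, not ``$n!$ times'' it; as written the sentence would introduce a spurious factor of $n!$. Second, your computation incidentally illuminates the paper's remark that the conjectured formula ``superficially resembles'' the binomial transform (\ref{eq:EtoS}): by Theorem~\ref{thm:Eformulas}, the summand $n!\binom{n+1}{2j+r-1}$ agrees with $E_{n,j}(-1,2;r)$ for $r=1,2$ (and for $r=0$ away from $j=0$), and your discarded $(w-1)^{n+1}$ term quantifies exactly the discrepancy for general $r\in\mathbb Z$ and for the indices $j$ outside $[0,k]$. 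I would encourage you to write this up carefully --- the only remaining work is the bookkeeping you already identify, namely phrasing the two substitutions as formal-Laurent-series residue identities in $u=w-1$ so that the half-integer powers of $1+v$ and the branch choice $w(0)=1$ are handled once and for all.
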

\noindent
This superficially resembles the binomial transform
formula~(\ref{eq:EtoS}) of Theorem~\ref{thm:1}, but the range of the
summation index~$j$ is not $0\le j\le k$, and the upper argument
$n-\nobreak j$ of the first binomial coefficient may be negative.  So
it is not easy to extract the generalized Eulerian numbers
$E_{n,k}(-1,2;r)$ corresponding to the $\hat S_{n,k}(-1,2;r)$ from
this conjectured formula.

\smallskip
The generalized Stirling numbers $S_{n,k}(-1,2;0)$ and
$S_{n,k}(-2,1;0)$ obtained by setting $r=0$ in
Theorem~\ref{thm:4formulas} could be called numbers of the second
kind, and unsigned ones of the first kind, belonging to an as~yet
unnamed combinatorial family.  They appear as \oeisseqnum{A046089}
and~\oeisseqnum{A035342} in the OEIS, where they are interpreted as
counting $n$\nobreakdash-vertex rooted forests composed of $k$~rooted
trees, suitably restricted and labeled.  The more general versions
$S_{n,k}(-1,2;r)$ and $S_{n,k}(-2,1;r)$ seem not to have distinctive
interpretations.  $S_{n,k}(-1,2;1)$ and $S_{n,k}(-2,1;1)$ appear
separately as \oeisseqnum{A321966} and~\oeisseqnum{A265649}, and
notably, one has $S_{n,k}(-1,2;3)=S_{n+1,k+1}(-1,2;0)$ and
$S_{n,k}(-2,1;3)=S_{n+1,k+1}(-2,1;0)$.

It has been observed (see \oeisseqnum{A321966} in the OEIS) that for
all $n\ge0$,
\begin{subequations}
\begin{align}
\label{eq:Hermiteidentity}
& z^n\Hermiteprob_n(z) = \sum_{k=0}^n S_{n,k}(-1,2;\,1)\Hermiteprob_{2k}(z),\\
& \Hermiteprob_{2n}(z) = \sum_{k=0}^n (-1)^{n-k}\,S_{n,k}(-2,1;\,1)\,z^k \Hermiteprob_{k}(z),
\label{eq:Hermiteidentityinverse}
\end{align}
\end{subequations}
the two identities being inverse to each other, as the inverse of the
matrix $\mathcal{S}(-1,2;1)$ is $\mathcal{S}(2,-1;-1)$, and
$S_{n,k}(2,-1;-1)$ equals $(-1)^{n-k} S_{n,k}(-2,1;1)$.  Here
$\Hermiteprob_n$, $n\ge0$, are the probabilists' version of the
physicists' Hermite polynomials~$H_n$, i.e.,
$\Hermiteprob_n(z)=2^{-n/2}\,H_n(z/\sqrt2) =(-1)^n{\rm e}^{z^2/2}
D_z^n {\rm e}^{-z^2/2}$.  

There may be simpler expressions for the connection coefficients in
these identities than are provided by formulas (v) and~(vi) of
Theorem~\ref{thm:4formulas}.  The $S_{n,k}(-1,2;1)$
in~(\ref{eq:Hermiteidentity}) could be computed instead by
\begin{equation}
\label{eq:lesscomb}
S_{n,k}(-1,2;\,1) = 2^{-k}\sum_{j=0}^k (-1)^{k-j}\,
\frac{(2j+1)^{\overline n}}{j!\,(k-j)!},
\end{equation}
which is an application of the summation formula (\ref{eq:Sform}) of
Theorem~\ref{thm:altdef1}.  Also, one should notice that
$\mathcal{S}(-1,2;1) = \mathcal{S}(-1,1;1)\,\mathcal{S}(1,2;0)$, which
is an instance of the matrix product formula~(\ref{eq:Smultrule}).  It
implies that
\begin{equation}
S_{n,k}(-1,2;\,1) = \sum_{j=k}^n \hat L_{n,j}^{(1)}\,B_{j,k}
=
\frac{(n!)^2}{k!} \sum_{j=k}^n \frac{2^{-(j-k)}}{j!\, (n-j)!}\,\binom{k}{j-k}.
\label{eq:seemssimpler}
\end{equation}
Equation~(\ref{eq:seemssimpler}) is more combinatorial
than~(\ref{eq:lesscomb}) in the sense that none of its summands is
negative, and it generalizes to
\begin{equation}
\label{eq:callrewrite}
S_{n,k}(-1,2;\,r) = \sum_{j=k}^n \hat L_{n,j}^{(r)}\,B_{j,k}
=
\frac{n!\,(n+r-1)!}{k!} \sum_{j=k}^n \frac{2^{-(j-k)}}{(j+r-1)!\, (n-j)!}\,\binom{k}{j-k},
\end{equation}
where $r$~is any positive integer.  But
rewriting~(\ref{eq:callrewrite}) in hypergeometric terms and extending
it to all $r\in\mathbb{C}$ would yield an expression comparable to the
${}_2{\hat F}_1$\nobreakdash-based one in part~(v) of
Theorem~\ref{thm:4formulas}.  The numbers $S_{n,k}(-2,1;1)$ in the
inverse identity~(\ref{eq:Hermiteidentityinverse}) will reappear at
the end of the next section.

\smallskip
As regards combinatorial interpretations, it must be said that when
$\alpha,\beta$ and~$r$ are integers, many quite broadly applicable
interpretations of the numbers $S_{n,k}(\alpha,\beta;r)$ are known,
such as ones involving the placement of balls in cells and
compartments, or the drawing of balls from an urn.  (Some are reviewed
in \cite[Section~4.2]{Mansour2016}.)  The ones mentioned in this
section are specific to the choices of $\alpha,\beta,r$ for which a
closed-form expression for $S_{n,k}(\alpha,\beta;r)$ is available,
terminating hypergeometric series being regarded as closed-form
expressions.

\section{Ordering identities (generalized Stirling kind)}
\label{sec:stirlingids}

Operator ordering identities involving the generalized Stirling
numbers of Hsu and Shiue~\cite{Hsu98} will now be derived.  They are of
two types: (1)~the normal ordering of a power~$\Omega^n$ of any
single-annihilator word in~$\adag,a$, and (2)~the expansion
of~$\Omega^n$ in powers~$\Omega'{}^k$, $k=0,\dots,n$, of any other
such word.  Normal orderings are subsumed by expansions, which are
more general.  The respective main theorems are
Theorem~\ref{thm:cor1}, which can be viewed as a special case of a
normal ordering result of M\'endez et~al.~\cite{Mendez2005}, and
Theorem~\ref{thm:powerful}, which is new.

The basic technique used here is a long known and used
one~\cite{Kim2022,Mikhailov85}.  It is the transformation of an
identity in (commutative) polynomial algebra over~$\mathbb{C}$ to one
in operator algebra by replacing an indeterminate~$x$ by~$xD$.  What
results is an identity in~$\tilde W_{\mathbb{C}}$, but if the only
exponents appearing in~it are elements of~$\mathbb{N}$, it can also be
viewed as an identity in~$W_{\mathbb{C}}$.

\begin{theorem}
\label{thm:normord}
For all\/ $L,R\in\mathbb{C}$, one has a normal ordering identity in\/~$\tilde
W^{\rm{bal}}_{\mathbb{C}}${\rm:}
\begin{equation}
\label{eq:normord}
  \left(x^{-e}\right)^n \left(x^L D x^R\right)^n
  = \sum_{k=0}^n S_{n,k}\left(-e,1;\,R\right)
  x^kD^k,
\end{equation}
where\/ $e=L+R-1$.
\end{theorem}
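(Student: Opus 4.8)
The plan is to recognize the left-hand side of~(\ref{eq:normord}) as a stride-$(-e)$ falling factorial power of the number operator $xD$, to expand that in ordinary falling factorial powers $(xD)^{\underline k}$ by means of the Hsu--Shiue definition of the $S_{n,k}$, and then to normally order each term with Boole's identity.

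First I would invoke Lemma~\ref{lem:major} in the form~(\ref{eq:majorrole2a}): taking there $\alpha=-e=1-L-R$ and $r=R$, one has $1-\alpha-r=L$, so that
\[
  \left(x^{-e}\right)^n\left(x^LDx^R\right)^n = (xD+R)^{\underline n,\,-e}
\]
in $\tilde W_{\mathbb{C}}^{\rm bal}$. Thus the left-hand side of~(\ref{eq:normord}) is exactly $(xD+R)^{\underline n,-e}$, a polynomial of degree $n$ in the single commuting element $xD$. Next I would use the defining relation~(\ref{eq:Sdef1}) of the Hsu--Shiue numbers with parameters $(\alpha,\beta;r)=(-e,1;R)$, namely the polynomial identity $(x)^{\underline n,-e}=\sum_{k=0}^n S_{n,k}(-e,1;R)\,(x-R)^{\underline k}$ in the indeterminate~$x$. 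Since $\tilde W_{\mathbb{C}}^{\rm bal}$ contains the commutative polynomial algebra $\mathbb{C}[xD]$, this identity may be pushed forward along the $\mathbb{C}$-algebra homomorphism $\mathbb{C}[x]\to\mathbb{C}[xD]$ sending $x\mapsto xD+R$, which gives
\[
  (xD+R)^{\underline n,-e} = \sum_{k=0}^n S_{n,k}(-e,1;\,R)\,(xD)^{\underline k}
\]
in $\tilde W_{\mathbb{C}}^{\rm bal}$. Applying Boole's identity $(xD)^{\underline k}=x^kD^k$ (the $\alpha=1$ case of~(\ref{eq:majorrole1a})) to each summand then yields~(\ref{eq:normord}).

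I do not expect a genuine obstacle here; the one point needing care is the parameter bookkeeping in the first step---checking that $1-\alpha-r=L$ under the assignment $\alpha=-e$, $r=R$, so that the reparametrized lemma really reproduces the word $x^LDx^R$---together with the observation that substituting $x\mapsto xD+R$ into a polynomial identity over $\mathbb{C}$ is legitimate precisely because $xD$ generates a commutative subalgebra of $\tilde W_{\mathbb{C}}^{\rm bal}$. (Alternatively, one could prove~(\ref{eq:normord}) by induction on $n$, using the GKP recurrence~(\ref{eq:SGKPrec}) for the $S_{n,k}$ on the right and the commutation relations of $\tilde W_{\mathbb{C}}$ on the left, but the route through Lemma~\ref{lem:major} is considerably shorter.)
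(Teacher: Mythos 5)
Your proposal is correct and follows essentially the same route as the paper: it combines the Hsu--Shiue definition~(\ref{eq:Sdef1}) with $x\mapsto xD+r$, the generalized Boole identity~(\ref{eq:majorrole2a}) of Lemma~\ref{lem:major}, and the classical Boole identity $(xD)^{\underline k}=x^kD^k$, with the same parameter assignment $\alpha=1-L-R$, $r=R$. The only difference is the order of the steps (you apply the lemma to the left-hand side first, whereas the paper starts from the polynomial identity and converts both sides), which is immaterial.
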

\begin{proof}
  By the definition~(\ref{eq:Sdef1}) of the Hsu--Shiue numbers
  $S_{n,k}(\alpha,\beta;r)$, with $x$ replaced by $xD+r$ and $\beta$
  set equal to\/~$1$,
  \begin{equation}
    (xD+r)^{\underline n,\alpha} = \sum_{k=0}^n
    S_{n,k}(\alpha,1;\,r)\,(xD)^{\underline k}.
  \end{equation}
Applying Boole's identity $(xD)^{\underline k}=x^kD^k$ and the
generalization (\ref{eq:majorrole2a}) given in Lemma~\ref{lem:major}
converts this to
  \begin{equation}
    (x^\alpha)^n(x^{1-\alpha-r}Dx^r)^n
     = \sum_{k=0}^n
    S_{n,k}(\alpha,1;\,r)\,x^kD^k,
  \end{equation}
  and letting $\alpha=1-L-R$, $r=R$ yields~(\ref{eq:normord}).
\end{proof}

\begin{theorem}
For all\/ $L,R\in\mathbb{N}$ with\/ $L+R\ge1$, one has a normal ordering
identity in\/~$W_{\mathbb{C}}${\rm:}
\begin{equation}
\begin{aligned}
\label{eq:untilnow}
  \left(\adag^L a \adag^R\right)^n &= \left(\adag^{e}\right)^n \,\sum_{k=0}^n
  S_{n,k}\left(-e,1;\,R\right) \adag^k a^k \\
  &= \left(\adag^e\right)^n
  \,\sum_{k=0}^n 
  \left[
    \sum_{j=0}^k (-1)^{k-j} \,\frac{(R+j)^{\overline n,e}}{j!(k-j)!}
    \right] \adag^k a^k,
\end{aligned}
\end{equation}
where\/ $e=L+R-1\ge0$ is the excess of the word\/ $\Omega={a^\dag}^L a
{\adag}^R$.
\label{thm:cor1}
\end{theorem}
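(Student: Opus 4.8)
The plan is to read off~(\ref{eq:untilnow}) from the balanced-algebra normal ordering~(\ref{eq:normord}) of Theorem~\ref{thm:normord}, by clearing a single negative power of~$x$ and then transferring the resulting identity from~$\tilde W_{\mathbb{C}}$ into~$W_{\mathbb{C}}$. Since $L,R\in\mathbb{N}$ with $L+R\ge1$, the excess $e=L+R-1$ is a \emph{nonnegative} integer, so on the left-hand side of~(\ref{eq:normord}) the factor $(x^{-e})^n=x^{-en}$ is the only thing preventing that identity from living in the polynomial subalgebra $\mathbb{C}[x,D]\subset\tilde W_{\mathbb{C}}$. First I would left-multiply both sides of~(\ref{eq:normord}) by $(x^{e})^n=x^{en}$; because powers of~$x$ commute in~$\tilde W_{\mathbb{C}}$ and $x^{e}x^{-e}=1$, the left-hand side collapses to $(x^LDx^R)^n$, producing
\[
  \left(x^LDx^R\right)^n = \left(x^{e}\right)^n\sum_{k=0}^n S_{n,k}(-e,1;\,R)\,x^kD^k
\]
in $\tilde W_{\mathbb{C}}^{\rm bal}$.

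Now every exponent in sight --- the $L,R$ inside the words, the $e$ on the right-hand side, and the $k$ in $x^kD^k$ --- is a nonnegative integer, so both sides lie in $\mathbb{C}[x,D]$, which is canonically isomorphic to~$W_{\mathbb{C}}$ via $x\mapsto\adag$, $D\mapsto a$. Transporting the displayed identity along this isomorphism yields precisely the first equality of~(\ref{eq:untilnow}). This is the only place the hypothesis $L+R\ge1$ is used: it is exactly what makes $e\ge0$, hence what lets the honest element $x^{en}$ --- rather than a formal inverse $x^{-en}$ --- suffice to remove all negative powers. No further obstacle arises, since the genuine algebraic content (Boole's identity $(xD)^{\underline k}=x^kD^k$ together with the generalization~(\ref{eq:majorrole2a}) of Lemma~\ref{lem:major}) has already been used in the proof of Theorem~\ref{thm:normord}.

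For the second equality it remains to substitute a summation formula for the coefficient $S_{n,k}(-e,1;R)$. Since $\hat S_{n,k}=\beta^kk!\,S_{n,k}$, putting $\beta=1$ gives $S_{n,k}(-e,1;R)=\hat S_{n,k}(-e,1;R)/k!$, and formula~(\ref{eq:Sform}) of Theorem~\ref{thm:altdef1} gives
\[
  \hat S_{n,k}(-e,1;\,R)=\sum_{j=0}^k(-1)^{k-j}\binom{k}{j}\,(R+j)^{\underline{n},-e}.
\]
The last step is purely notational: a falling factorial with stride $-e$ is a rising factorial with stride $e$, namely $(R+j)^{\underline{n},-e}=(R+j)(R+j+e)\cdots\bigl(R+j+(n-1)e\bigr)=(R+j)^{\overline{n},e}$, and $\binom{k}{j}/k!=1/[\,j!\,(k-j)!\,]$, which rewrites $S_{n,k}(-e,1;R)$ as the bracketed inner sum displayed in~(\ref{eq:untilnow}). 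I do not anticipate a real obstacle here; the only point demanding care is keeping the stride convention straight when turning $(\,\cdot\,)^{\underline{n},-e}$ into the rising factorial $(\,\cdot\,)^{\overline{n},e}$.
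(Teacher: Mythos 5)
Your proof is correct and follows essentially the same route as the paper: left-multiply the identity of Theorem~\ref{thm:normord} by $(x^e)^n$, observe that the hypothesis $L,R\in\mathbb{N}$, $L+R\ge1$ makes every exponent a nonnegative integer so that $\adag,a$ may be substituted for $x,D$, and then insert the summation formula~(\ref{eq:Sform}) via $\hat S_{n,k}=\beta^kk!\,S_{n,k}$ and the stride conversion $(R+j)^{\underline{n},-e}=(R+j)^{\overline{n},e}$. (Only trivial quibble: once you multiply by $(x^e)^n$ the identity has excess $en$ and so no longer lies in $\tilde W_{\mathbb{C}}^{\rm bal}$ unless $e=0$, but this does not affect the argument.)
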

\begin{proof}
Left-multiply both sides of (\ref{eq:normord}) by
$\left(x^e\right)^n=\left(x^{L+R-1}\right)^n$.  If all powers of $x$
and~$D$ are nonnegative integers, which is the case if
$L,R\in\mathbb{N}$ with $L+\nobreak R\ge\nobreak 1$, substituting
$\adag,a$ for $x,D$ yields an identity in~$W_\mathbb{C}$.  The formula
for $S_{n,k}(-e,1;r)$ is taken from Theorem~\ref{thm:altdef1}, it
being recalled that $\hat S_{n,k}(\alpha,\beta;r) = \beta^k k!
\,S_{n,k}(\alpha,\beta;r)$.
\end{proof}

This is a special case of the abovementioned result of M\'endez
et~al.~\cite{Mendez2005}, who normally-ordered an arbitrary ``boson
string'' of the form $\adag^{r_m}a^{s_m}\dots \adag^{r_1}a^{s_1}$.
(Also see \cite[Section~4.1.2]{Mansour2016}.)  Typical applications
include the following.  Setting $(L,R)=(1,0)$ and~$(0,1)$
in~(\ref{eq:untilnow}) yields the ordering identities shown in
eq.~(\ref{eq:katriel}) of the introduction, which readily generalize
with the aid of Lemma~\ref{lem:major} to
\begin{equation}
\label{eq:katrielplus}
(\adag a)^{\underline n,\alpha} = \sum_{k=0}^n S_{n,k}(\alpha,1;0)\, \adag^k (a)^k,  
\qquad 
(a \adag)^{\underline n,\alpha} = \sum_{k=0}^n S_{n,k}(\alpha,1;1)\, \adag^k (a)^k,
\end{equation}
for all $\alpha\in\mathbb{C}$.  Another application involves the
unsigned $r$\nobreakdash-Lah numbers $\hat
L_{n,k}^{(r)}=S_{n,k}(-1,1;r)$, which by formula~(ii${}^\prime$) of
Theorem~\ref{thm:8formulas} equal $\binom{n}k (k+r)^{\overline{n-k}}$.
Setting $(L,R)$ successively equal to $(0,2)$, $(1,1)$, and~$(2,0)$,
for each of which $e=\nobreak 1$, yields the triple of normal
orderings
\begin{subequations}
  \begin{align}
    \left(\adag^0 a \adag^2\right)^n &= \adag^n\:
    \sum_{k=0}^n \binom{n}k (k+2)^{\overline{n-k}}\,\adag^k a^k,\\
    \left(\adag^1 a \adag^1\right)^n &= \adag^n\:
    \sum_{k=0}^n \binom{n}k (k+1)^{\overline{n-k}}\,\adag^k a^k,\\
    \left(\adag^2 a \adag^0\right)^n &= \adag^n\:
    \sum_{k=0}^n \binom{n}k (k+0)^{\overline{n-k}}\,\adag^k a^k.
  \end{align}
\end{subequations}
An additional application is this.

\begin{corollary}
For all $L,R\in\mathbb{N}$ with $L+R\ge1$, one has a special
normal-ordering identity in\/~$W_{\mathbb{C}}${\rm:}
\begin{equation}
\begin{split}
  &\left(\adag^L a \adag^{R} + \adag^R a \adag^L\right)^n\\
  &\qquad =\left(\adag^{(L+R-1)}\right)^n \,\sum_{k=0}^n \,2^k S_{n,k}\left(2-2L-2R,2;\,L+R\right)
  \adag^k a^k.
\end{split}
\end{equation}
\end{corollary}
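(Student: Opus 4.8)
The plan is to collapse the symmetrized word into a single word, normally order that single word by Theorem~\ref{thm:normord}, and then translate the result back into $W_{\mathbb{C}}$. First I would work in $\tilde W_{\mathbb{C}}^{\rm bal}$ and set $M = (L+R)/2$. Identity~(\ref{eq:otherpairb}) gives $x^L D x^R + x^R D x^L = 2\,x^M D x^M$, so, abbreviating $e = L+R-1$,
\[
  \left(x^{-e}\right)^n\left(x^L D x^R + x^R D x^L\right)^n
  = 2^n\left(x^{-e}\right)^n\left(x^M D x^M\right)^n .
\]
The observation that makes everything fit is that the excess of $x^M D x^M$ is $M+M-1 = L+R-1 = e$, so Theorem~\ref{thm:normord}, which holds for arbitrary complex left/right exponents, applies verbatim with both of its exponents equal to $M$ and with its ``$e$'' equal to the present one, yielding
\[
  \left(x^{-e}\right)^n\left(x^M D x^M\right)^n
  = \sum_{k=0}^n S_{n,k}\!\left(-e,1;\,M\right)\, x^k D^k .
\]

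Next I would bring in the homogeneity relation~(\ref{eq:homogeneity}) for the $S_{n,k}$ with scaling factor $\lambda = 2$, in the form $2^n S_{n,k}(\alpha,\beta;r) = 2^k S_{n,k}(2\alpha,2\beta;2r)$. Applied with $(\alpha,\beta,r) = \left(1-L-R,\,1,\,\tfrac{L+R}{2}\right)$ this turns the factor $2^n S_{n,k}(-e,1;M)$ into $2^k S_{n,k}(2-2L-2R,\,2;\,L+R)$, which is exactly the coefficient appearing in the statement. Combining the last two displays gives the polynomial-exponent identity
\[
  \left(x^{-e}\right)^n\left(x^L D x^R + x^R D x^L\right)^n
  = \sum_{k=0}^n 2^k S_{n,k}\!\left(2-2L-2R,\,2;\,L+R\right)\, x^k D^k .
\]

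To finish, I would left-multiply both sides by $\left(x^{e}\right)^n = \left(x^{L+R-1}\right)^n$ and pass to $W_{\mathbb{C}}$ by replacing $x,D$ with $\adag,a$. This last step is legitimate because $L,R\in\mathbb{N}$ with $L+R\ge 1$ forces $e = L+R-1\ge 0$, so after the left-multiplication every power of $x$ and of $D$ on either side is a nonnegative integer; in particular the auxiliary half-integer exponent $M$ has disappeared from the final identity. The resulting identity in $W_{\mathbb{C}}$ is precisely the claimed one, with the prefactor $\left(\adag^{L+R-1}\right)^n$ being the image of $\left(x^{e}\right)^n$. I do not expect a genuine obstacle here; the only things that need care are the bookkeeping of the ``$e$'' parameter when Theorem~\ref{thm:normord} is invoked for the single word $x^M D x^M$, and the (routine) check that the hypothesis $L+R\ge 1$ guarantees no negative exponents survive the return trip from $\tilde W_{\mathbb{C}}$ to $W_{\mathbb{C}}$.
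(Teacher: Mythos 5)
Your proof is correct and is essentially the paper's own argument: symmetrize via the pair identity, normally order with Theorem~\ref{thm:normord}, and rescale the parameters by homogeneity. The only difference is that the paper splits into cases — handling $L+R$ even directly in $W_{\mathbb{C}}$ via (\ref{eq:otherpaira}) and Theorem~\ref{thm:cor1}, and reserving the $\tilde W_{\mathbb{C}}$ route through (\ref{eq:otherpairb}) for $L+R$ odd — whereas you run the $\tilde W_{\mathbb{C}}$ route uniformly for both parities, which is a slightly cleaner packaging of the same idea.
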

\begin{proof}
  If $L+R$ is even, the left-hand side equals $2(\adag)^{(L+R)/2} a
  (\adag)^{(L+R)/2}$ by the identity (\ref{eq:otherpaira})
  in~$W_{\mathbb{C}}$, and the claim follows from
  Theorem~\ref{thm:cor1}, if one also uses the homogeneity property
  $S_{n,k}(\alpha,\beta;r) = 2^{-(n-k)} S_{n,k}(2\alpha,2\beta;2r)$.
  (See eqs.~(\ref{eq:homogeneity}).)  If $L+R$ is odd, one must go
  back to Theorem~\ref{thm:normord}, and apply the counterpart
  identity~(\ref{eq:otherpairb}) in~$\tilde W_{\mathbb{C}}$.
\end{proof}

The following subsumes Theorem~\ref{thm:normord}, because its first
identity, eq.~(\ref{eq:firstmain2a}), reduces to
Theorem~\ref{thm:normord} when $(L',R')=(0,0)$.

\begin{theorem}
For all\/ $L,R,L',R'\in\mathbb{C}$, one has four Stirling-type expansion identities
in\/~$\tilde W_{\mathbb{C}}^{\rm{bal}}${\rm:}
\label{thm:firstmain}
{
\begin{subequations}
\begin{align}
  \left(x^{-e}\right)^n \left(x^L D x^R\right)^n
  &= \sum_{k=0}^n S_{n,k}\left(-e,-e';\,R-R'\right)\left(x^{-e'}\right)^k \left(x^{L'}Dx^{R'}\right)^k   \label{eq:firstmain2a}\\[\jot]
  &= \sum_{k=0}^n S_{n,k}\left(-e,e';\,R+L'-1\right)\left(x^{L'}Dx^{R'}\right)^k\left(x^{-e'}\right)^k,\label{eq:firstmain2b}
\\
\shortintertext{and}
\stepcounter{parentequation}\gdef\theparentequation{\arabic{parentequation}}\setcounter{equation}{0}
  \left(x^L D x^R\right)^n\left(x^{-e}\right)^n 
  &= \sum_{k=0}^n S_{n,k}\left(e,-e';\,1-L-R'\right)\left(x^{-e'}\right)^k \left(x^{L'}Dx^{R'}\right)^k   \label{eq:firstmain2c}\\[\jot]
  &= \sum_{k=0}^n S_{n,k}\left(e,e';\,L'-L\right)\left(x^{L'}Dx^{R'}\right)^k\left(x^{-e'}\right)^k,\label{eq:firstmain2d}
\end{align}
\end{subequations}}where\/ $e=L+R-1$ and\/ $e'=L'+R'-1$.
\end{theorem}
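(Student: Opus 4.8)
\emph{Approach.} All four identities come from a single device: evaluate the defining relation~(\ref{eq:Sdef1}) of the Hsu--Shiue numbers at an operator argument, and then use the two forms~(\ref{eq:majorrole2a}),(\ref{eq:majorrole2b}) of the generalized Boole identity (Lemma~\ref{lem:major}) to rewrite each falling factorial power of $xD$ that appears as a product of operators of the form $x^\bullet D x^\bullet$. The two ways of treating the left-hand side, times the two ways of treating the right-hand side, produce exactly the four cases.

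First I would observe that the commutative subalgebra $\mathbb{C}[xD]\subset\tilde W_{\mathbb{C}}^{\rm bal}$ is a polynomial ring in the single element $xD$, so any polynomial identity in $\mathbb{C}[x]$ remains valid after the substitution $x\mapsto xD+r_0$, for any fixed $r_0\in\mathbb{C}$. Applied to~(\ref{eq:Sdef1}) with free parameters $\alpha,\beta,r$, this gives in $\tilde W_{\mathbb{C}}^{\rm bal}$
\begin{equation}
(xD+r_0)^{\underline n,\alpha} = \sum_{k=0}^n S_{n,k}(\alpha,\beta;\,r)\,(xD+r_0-r)^{\underline k,\beta}.
\end{equation}
Lemma~\ref{lem:major} rewrites the left side, via~(\ref{eq:majorrole2a}) or~(\ref{eq:majorrole2b}) with $n,\alpha,r_0$ in the roles of $n,\alpha,r$, as either $(x^\alpha)^n(x^{1-\alpha-r_0}Dx^{r_0})^n$ or $(x^{1-r_0}Dx^{\alpha+r_0})^n(x^{-\alpha})^n$; and each summand on the right, via the same two options with $k,\beta,r_0-r$ in those roles, as either $(x^\beta)^k(x^{1-\beta-r_0+r}Dx^{r_0-r})^k$ or $(x^{1-r_0+r}Dx^{\beta+r_0-r})^k(x^{-\beta})^k$. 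It then remains only to match parameters.

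For~(\ref{eq:firstmain2a}) I use~(\ref{eq:majorrole2a}) on both sides: taking $\alpha=-e$ and $r_0=R$ makes the left side $(x^{-e})^n(x^LDx^R)^n$ since $1-\alpha-r_0=L$; taking $\beta=-e'$ and $r=R-R'$ makes the general summand $(x^{-e'})^k(x^{L'}Dx^{R'})^k$ since $r_0-r=R'$ and $1-\beta-r_0+r=L'$; the coefficient is $S_{n,k}(-e,-e';R-R')$. For~(\ref{eq:firstmain2b}) I keep~(\ref{eq:majorrole2a}) on the left ($\alpha=-e$, $r_0=R$) but switch to~(\ref{eq:majorrole2b}) on the right, where $\beta=e'$ and $r=R+L'-1$ give the summand $(x^{L'}Dx^{R'})^k(x^{-e'})^k$ and coefficient $S_{n,k}(-e,e';R+L'-1)$. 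For~(\ref{eq:firstmain2c}) and~(\ref{eq:firstmain2d}) I use~(\ref{eq:majorrole2b}) on the left with $\alpha=e$ and $r_0=1-L$, which makes the left side $(x^LDx^R)^n(x^{-e})^n$ (since $\alpha+r_0=R$), and then pair it with~(\ref{eq:majorrole2a}) resp.\ (\ref{eq:majorrole2b}) on the right, matching $\beta,r$ exactly as before, to get the coefficients $S_{n,k}(e,-e';1-L-R')$ and $S_{n,k}(e,e';L'-L)$. (As a shortcut, (\ref{eq:firstmain2a}) alternatively follows by composing two instances of Theorem~\ref{thm:normord} through the semigroup law~(\ref{eq:Smultrule}), and (\ref{eq:firstmain2c}),(\ref{eq:firstmain2d}) follow from~(\ref{eq:firstmain2b}),(\ref{eq:firstmain2a}) by applying the formal adjoint~$*$, which reverses products and sends $(x^LDx^R)^n$ to $(-1)^n(x^RDx^L)^n$, then relabeling $L\leftrightarrow R$, $L'\leftrightarrow R'$ and using the homogeneity relation $S_{n,k}(-\alpha,-\beta;-r)=(-1)^{n-k}S_{n,k}(\alpha,\beta;r)$ from~(\ref{eq:homogeneity}).) As a consistency check, setting $(L',R')=(0,0)$ in~(\ref{eq:firstmain2a}) gives $e'=-1$ and recovers Theorem~\ref{thm:normord}.

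\emph{Main obstacle.} There is no genuine difficulty; the only work is the careful bookkeeping of the four parameter identifications. One point worth a remark is that $\beta=-e'$ or $e'$ (and $\alpha=\pm e$) may vanish, namely when $L'+R'=1$ (resp.\ $L+R=1$). In that degenerate case $(xD+s)^{\underline k,0}=(xD+s)^k$, Lemma~\ref{lem:major} still applies since it holds for all complex strides including $0$, and $S_{n,k}(\alpha,\beta;r)$ is still well defined by~(\ref{eq:Sdef1}); since every quantity involved is continuous in the parameters, the four identities hold for all $L,R,L',R'\in\mathbb{C}$ as stated.
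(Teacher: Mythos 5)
Your proof is correct: all four parameter identifications check out (e.g.\ for~(\ref{eq:firstmain2c}), $\alpha=e$, $r_0=1-L$ gives $\alpha+r_0=R$ and $r=1-L-R'$, as required), and the degenerate cases $L+R=1$ or $L'+R'=1$ are handled properly since~(\ref{eq:Sdef1}) and Lemma~\ref{lem:major} remain valid at zero stride. Your route differs from the paper's in one organizational respect. The paper obtains~(\ref{eq:firstmain2a}) compositionally: it takes the $\beta=1$ normal-ordering identity of Theorem~\ref{thm:normord}, inverts it using $\mathcal{S}(\alpha,\beta;r)^{-1}=\mathcal{S}(\beta,\alpha;-r)$ to get an expansion of $x^nD^n$ in powers of $x^{L'}Dx^{R'}$, and composes the two, with the semigroup law~(\ref{eq:Smultrule}) supplying the parameters $(-e,-e';R-R')$; it then derives (\ref{eq:firstmain2b})--(\ref{eq:firstmain2d}) from~(\ref{eq:firstmain2a}) by applying the self-adjoint identity~(\ref{eq:powerfullemma}) to one or both sides and renaming. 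You instead evaluate~(\ref{eq:Sdef1}) at $xD+r_0$ with $\beta$ left general and apply the two forms (\ref{eq:majorrole2a}),(\ref{eq:majorrole2b}) of Lemma~\ref{lem:major} directly to each side, getting all four identities in one uniform sweep. Since~(\ref{eq:powerfullemma}) is precisely the equality of (\ref{eq:majorrole2a}) and~(\ref{eq:majorrole2b}), the two derivations are built from the same ingredients; yours is more symmetric and avoids invoking matrix inversion and the product formula, while the paper's makes visible \emph{why} the third parameter is a difference $R-R'$ (it is the sum $R+(-R')$ under the composition law) and exhibits~(\ref{eq:firstmain2a}) as Theorem~\ref{thm:normord} composed with its own inverse. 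Your closing consistency check and the adjoint/homogeneity shortcut for (\ref{eq:firstmain2c}),(\ref{eq:firstmain2d}) agree with the remark the paper appends to the theorem.
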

\begin{remarkaftertheorem}
Up~to parameter renamings, the identities
(\ref{eq:firstmain2a}),(\ref{eq:firstmain2d}) are related by the
formal adjoint operation~$*$ on~$\tilde W_{\mathbb{C}}$, which
preserves $\tilde W_{\mathbb{C}}^{\rm bal}$; and so are
(\ref{eq:firstmain2b}),(\ref{eq:firstmain2c}).
\end{remarkaftertheorem}
\begin{proof}
  The identity (\ref{eq:normord}) of Theorem~\ref{thm:normord} is
  based on the matrix $\mathcal{S}(1-\nobreak L-\nobreak R,1;R)$, the
  inverse of which is $\mathcal{S}\bigl(1,1-\nobreak L-\nobreak
  R;-R\bigr)$.  So the inverse identity
  \begin{equation}
    \label{eq:inversenormord}
    x^nD^n = \sum_{k=0}^n S_{n,k}\left(1,1-{L'}-{R'};\,\,{-R'}\right)
    \left(x^{1-{L'}-{R'}}\right)^k \left(x^{L'} D x^{R'}\right)^k
  \end{equation}
  also holds.  Composing (\ref{eq:normord})
  with~(\ref{eq:inversenormord}) yields~(\ref{eq:firstmain2a}),
  because by the matrix product formula~(\ref{eq:Smultrule}), the
  product of $\mathcal{S}(1-\nobreak L-\nobreak R,1;R)$ and
  $\mathcal{S}\bigl(1,\allowbreak 1-\nobreak L'-\nobreak R';-R'\bigr)$
  is $\mathcal{S}(1-\nobreak L-\nobreak R,\allowbreak 1-\nobreak
  L'-\nobreak R';\allowbreak R-\nobreak R')$.  To obtain
  (\ref{eq:firstmain2b}) from~(\ref{eq:firstmain2a}), apply
  (\ref{eq:powerfullemma}) to the right-hand side and perform the
  renamings $L'\leftarrow 1-\nobreak R'$, $R'\leftarrow 1-\nobreak
  L'$.  To obtain (\ref{eq:firstmain2c}) from~(\ref{eq:firstmain2a}),
  apply (\ref{eq:powerfullemma}) to the left-hand side and perform the
  renamings $L\leftarrow 1-\nobreak R$, $R\leftarrow 1-\nobreak L$.
  To obtain (\ref{eq:firstmain2d}) from~(\ref{eq:firstmain2a}), do
  both.
\end{proof}

The following four-identity theorem subsumes Theorem~\ref{thm:cor1},
as the $e=\allowbreak L+\nobreak R-\nobreak 1\ge0$ case of its first
identity reduces to Theorem~\ref{thm:cor1} if one sets
$(L',R')=\allowbreak (0,0)$, so that $\Omega'=\nobreak a$ and
$e'=\nobreak -1$.  That is, it produces a normal ordering of its
left-hand side.  The fourth identity similarly produces an anti-normal
ordering of its left-hand side.

\begin{theorem}
\label{thm:powerful}
For all\/ $L,R,L',R'\in\mathbb{N}$, one has four Stirling-type expansion identities in\/~$W_{\mathbb{C}}${\rm:}
{
\begin{subequations}
\begin{align}
  &\left({a^\dag}^{(E_L-e)}\right)^n \left({a^\dag}^L a {a^\dag}^R\right)^n \label{eq:main1a}\\
  &{}=\sum_{k=0}^n S_{n,k}\left(-e,-e';\,R-R'\right)
  \left({a^\dag}^{E_L}\right)^{n-k}  \left({a^\dag}^{({E_L}-e')}\right)^k  \left({a^\dag}^{L'}a{a^\dag}^{R'}\right)^k   ,\nonumber\\[\jot]
  &\left({a^\dag}^{(E_L-e)}\right)^n \left({a^\dag}^L a {a^\dag}^R\right)^n   \left(\adag^{E_R}\right)^n\label{eq:main1b}\\
  &{}=\sum_{k=0}^n S_{n,k}\left(-e,e';\,R+L'-1\right)
  \left({a^\dag}^{E_L}\right)^{n}   \left({a^\dag}^{L'}a{a^\dag}^{R'}\right)^k    \left({a^\dag}^{(E_R-e')}\right)^k  \left(\adag^{E_R}\right)^{n-k}  ,\nonumber
\\
\shortintertext{and}
\stepcounter{parentequation}\gdef\theparentequation{\arabic{parentequation}}\setcounter{equation}{0}
  &\left({a^\dag}^{E_L}\right)^n \left({a^\dag}^L a {a^\dag}^R\right)^n \left({a^\dag}^{(E_R-e)}\right)^n\label{eq:main2a}\\
  &{}=\sum_{k=0}^n S_{n,k} \left(e,-e';\,1-L-R'\right)
    \left({a^\dag}^{E_L}\right)^{n-k}  \left({a^\dag}^{(E_L-e')}\right)^k   \left({a^\dag}^{L'}a{a^\dag}^{R'}\right)^k   \left(\adag^{E_R}\right)^n ,\nonumber\\[\jot]
  &\left({a^\dag}^L a {a^\dag}^R\right)^n \left({a^\dag}^{(E_R-e)}\right)^n \label{eq:main2b}\\
  &{}=\sum_{k=0}^n S_{n,k}\left(e,e';\,L'-L\right)
   \left({a^\dag}^{L'}a{a^\dag}^{R'}\right)^k  \left(\adag^{(E_R-e')}\right)^k   \left(\adag^{E_R}\right)^{n-k} ,\nonumber
\end{align}
\end{subequations}}where $e=L+R-1$ and $e'=L'+R'-1$ are the excesses of the words
$\Omega={a^\dag}^L a {\adag}^R$ and $\Omega'={a^\dag}^{L'} a
{\adag}^{R'}$\!, and the quantities $E_L=E_L(e,e')$, $E_R=E_R(e,e')$
are as follows:
\begin{center}
\begin{tabular}{c l l}
  Identity & $\hfil E_L$ & $\hfil E_R$ \\
  \hline
  {\rm(\ref{eq:main1a})} & $\max(e,e',0)$ & $\hfil \text{---}\hfil$ \\
  {\rm(\ref{eq:main1b})} & $\max(e,0)$ & $\max(e',0)$ \\
  {\rm(\ref{eq:main2a})} & $\max(e',0)$ & $\max(e,0)$ \\
  {\rm(\ref{eq:main2b})} & $\hfil \text{---}\hfil$ & $\max(e,e',0)$.
\end{tabular}
\end{center}
\end{theorem}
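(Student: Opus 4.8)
The plan is to deduce each of the four identities of Theorem~\ref{thm:powerful} from the corresponding identity of Theorem~\ref{thm:firstmain}, by clearing all negative powers of~$x$ and then passing from~$\tilde W_{\mathbb{C}}$ to~$W_{\mathbb{C}}$ via the isomorphism, recalled in Section~\ref{sec:algebra}, between the polynomial subalgebra generated by~$x,D$ and the Weyl--Heisenberg algebra generated by~$\adag,a$. For~(\ref{eq:main1a}) I would begin with~(\ref{eq:firstmain2a}) and left-multiply both sides by~$\left(x^{E_L}\right)^n$. Since powers of~$x$ commute and add in~$\tilde W_{\mathbb{C}}$, the left side becomes $\left(x^{E_L-e}\right)^n\left(x^LDx^R\right)^n$, and the $k$th summand on the right becomes $\left(x^{E_L}\right)^{n-k}\left(x^{E_L-e'}\right)^k\left(x^{L'}Dx^{R'}\right)^k$ after the regrouping $\left(x^{E_L}\right)^n\left(x^{-e'}\right)^k=\left(x^{E_L}\right)^{n-k}\left(x^{E_L-e'}\right)^k$, which is legitimate precisely because $0\le k\le n$. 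The exponents that then occur are $E_L-e$, $E_L-e'$, $E_L$, together with $L,R,L',R'$, which lie in~$\mathbb{N}$ by hypothesis; since $L,R,L',R'\in\mathbb{N}$ forces $e=L+R-1$ and $e'=L'+R'-1$ into~$\mathbb{Z}$, all of these lie in~$\mathbb{N}$ exactly when $E_L\ge\max(e,e',0)$, and $E_L=\max(e,e',0)$ is the minimal such value. The resulting identity is then one among nonnegative integer powers of~$x$ and~$D$, so substituting $\adag\leftarrow x$, $a\leftarrow D$ gives~(\ref{eq:main1a}).

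The remaining three cases are handled identically, the only differences being which side(s) are multiplied and hence which of~$E_L,E_R$ absorbs each ``deficit.'' For~(\ref{eq:main1b}) I would take~(\ref{eq:firstmain2b}), left-multiply by~$\left(x^{E_L}\right)^n$ and right-multiply by~$\left(x^{E_R}\right)^n$: on the left $x^{-e}$ is absorbed into~$x^{E_L-e}$, on the right the trailing~$x^{-e'}$ is absorbed via $\left(x^{-e'}\right)^k\left(x^{E_R}\right)^n=\left(x^{E_R-e'}\right)^k\left(x^{E_R}\right)^{n-k}$, and the inserted~$\left(x^{E_L}\right)^n$ stays put, so the thresholds are $E_L\ge\max(e,0)$ and $E_R\ge\max(e',0)$. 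For~(\ref{eq:main2a}) I would start from~(\ref{eq:firstmain2c}), again multiplying on both sides, so that $x^{-e}$ is absorbed on the right into~$x^{E_R-e}$ while $x^{-e'}$ is absorbed on the left into~$x^{E_L-e'}$, giving $E_L\ge\max(e',0)$ and $E_R\ge\max(e,0)$. For~(\ref{eq:main2b}) I would take~(\ref{eq:firstmain2d}) and right-multiply only by~$\left(x^{E_R}\right)^n$, so that both $x^{-e}$ and $x^{-e'}$ must be absorbed on the right, forcing $E_R\ge\max(e,e',0)$. In each case the stated value of~$E_L$ and/or~$E_R$ is the minimal one clearing every negative exponent, and the passage to~$W_{\mathbb{C}}$ is then automatic. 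As a consistency check, one verifies that the terms on both sides share a common excess (for instance $nE_L$ in~(\ref{eq:main1a})), as they must, since Theorem~\ref{thm:firstmain} is an identity in the excess-graded algebra~$\tilde W_{\mathbb{C}}^{\rm bal}$ and left/right multiplication by powers of~$x$ shifts the excess uniformly.

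There is no analytically hard step here: the content of the proof is the bookkeeping above---choosing which corner to park the surplus powers in, performing the regroupings of commuting powers of~$x$, and checking that $\max(e,e',0)$, $\max(e,0)$, $\max(e',0)$ are exactly the thresholds at which no negative exponent survives. The single point that deserves care is that the regrouping $\left(x^{E_L}\right)^n\left(x^{-e'}\right)^k=\left(x^{E_L}\right)^{n-k}\left(x^{E_L-e'}\right)^k$ (and its mirror image on the right) is valid \emph{in the displayed form} only because the summation index satisfies $0\le k\le n$, so that $n-k\in\mathbb{N}$ and $\left(x^{E_L}\right)^{n-k}$ has a bona fide nonnegative integer exponent once we pass to~$W_{\mathbb{C}}$, and because powers of~$x$ commute in~$\tilde W_{\mathbb{C}}$ even though $x$ and~$D$ do not. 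Everything else is the four-fold repetition of a single manipulation.
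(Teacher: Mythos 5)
Your proposal is correct and follows exactly the paper's own route: each identity of Theorem~\ref{thm:powerful} is obtained from the corresponding identity of Theorem~\ref{thm:firstmain} by left- and/or right-multiplying by $(x^{E_L})^n$, $(x^{E_R})^n$ with $E_L,E_R$ chosen to clear all negative exponents, regrouping the commuting powers of~$x$, and then substituting $\adag,a$ for~$x,D$. The paper states this in a single sentence; your write-up merely makes the bookkeeping (the regroupings and the minimality of $\max(e,e',0)$, $\max(e,0)$, $\max(e',0)$) explicit, and your excess check is consistent with the grading discussed in Section~\ref{sec:algebra}.
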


\smallskip
\begin{proof}
  Restrict each of (\ref{eq:firstmain2a}), (\ref{eq:firstmain2b}),
  (\ref{eq:firstmain2c}), (\ref{eq:firstmain2d}) by requiring
  $L,R,\allowbreak L',R'\in\nobreak\mathbb{N}$, and also left-multiply
  by~$(x^{E_L})^n$ and right-multiply by~$(x^{E_R})^n$, where
  $E_L,E_R$ are chosen to make all powers of~$x$ non-negative
  integers; and then substitute $\adag, a$ for~$x,D$.
\end{proof}

The expansion identities of Theorem~\ref{thm:powerful}, being
parametric, yield many individual operator ordering identities
in~$W_{\mathbb{C}}$.  Their coefficients $S_{n,k}$ can be computed
from the summation formula~(\ref{eq:Sform}) in
Theorem~\ref{thm:altdef1} or the triangular
recurrence~(\ref{eq:SGKPrec}) in Theorem~\ref{thm:recurrences}.  But
by using the closed-form expressions for certain~$S_{n,k}$ derived in
Section~\ref{sec:stirlingformulas}, one can make these identities more
explicit, perhaps facilitating new combinatorial interpretations.  The
following discussion focuses on the first and second identities.

It should be noted that the first identity, eq.~(\ref{eq:main1a}),
splits into two or three cases.  Suppose that $e,e'\ge0$ (which
incidentally rules~out the normal ordering case, when $e'=-1$).  Then
if $e\ge e'$, so that $E_L=e$, (\ref{eq:main1a})~says that
\begin{equation}
  \left({a^\dag}^L a {a^\dag}^R\right)^n
  =
  \sum_{k=0}^n S_{n,k}\left(-e,-e';\,R-R'\right)\,
  \adag^{(en-e'k)} \left({a^\dag}^{L'}a{a^\dag}^{R'}\right)^k,
\end{equation}
and if $e\le e'$, so that $E_L=e'$, it says that
\begin{equation}
  {a^\dag}^{(e'-e)n} \left({a^\dag}^L a {a^\dag}^R\right)^n
  =
  \sum_{k=0}^n S_{n,k}\left(-e,-e';\,R-R'\right)
  {a^\dag}^{e'(n-k)}   \left({a^\dag}^{L'}a{a^\dag}^{R'}\right)^k.
\end{equation}
In both cases,
\begin{equation}
  S_{n,k}(-e,-e';\,R-R') =
  \sum_{j=0}^k (-1)^{k-j}\, \frac{(R-R'-e'j)^{\overline n,e}}{j!(k-j)!},
\end{equation}
which comes from Theorem~\ref{thm:altdef1}.  If $e=e'\ge0$, i.e.,
$R+L=R'+L'\ge1$, (\ref{eq:main1a})~becomes an expansion identity with
quite simple coefficients,
\begin{equation}
  \left({a^\dag}^L a {a^\dag}^R\right)^n
  =
  \sum_{k=0}^n \binom{n}k (R-R')^{\overline{n-k},e}\:
  \adag^{e(n-k)} \left({a^\dag}^{L'}a{a^\dag}^{R'}\right)^k,
\end{equation}
where $e=R+L-1=R'+L'-1$.  The coefficients come from
formula~(i$^\prime$) of Theorem~\ref{thm:8formulas}.

Explicit examples of the expansion of a power of a word~$\Omega$ in
powers of another word~$\Omega'$ appeared in eqs.\ (\ref{eq:katriel})
and~(\ref{eq:sampleappl}) of the introduction.  The former is the case
$(L,R;\allowbreak L',R') = (1,0;\allowbreak0,0)$ of the
identity~(\ref{eq:main1a}), and the latter is the case
$(L,R;\allowbreak L',R') = (3,0;\allowbreak2,0)$.  The closed-form
expression for $\hat b_{n,k}=S_{n,k}(-2,-1;0)$ appearing
in~(\ref{eq:sampleappl}) comes not from Theorem~\ref{thm:altdef1} but
from formula~(iv) of Theorem~\ref{thm:8formulas}, having been
rewritten in~terms of ordinary factorials.  The identity inverse
to~(\ref{eq:sampleappl}) is the case $(L,R;\allowbreak
L',R')=(2,0;\allowbreak 3,0)$ of~(\ref{eq:main1a}), which is
\begin{equation}
\begin{aligned}
  \adag^n \left( {a^\dag}^2 a\right)^n  
&=  \sum_{k=0}^n  S_{n,k}(-1,-2;0)\: {a^\dag}^{2(n-k)} \left({a^\dag}^3 a\right)^k\\
&=  \sum_{k=0}^n  (-2)^{-(n-k)}\, \frac{n!}{k!}\,\binom{k}{n-k}  \: {a^\dag}^{2(n-k)} \left({a^\dag}^3 a\right)^k.
\end{aligned}
\label{eq:viewedas}
\end{equation}
Here the coefficient $S_{n,k}(-1,-2;0)$ equals
$(-1)^{n-k}S_{n,k}(1,2;0)$ by homogeneity, and the expression for
$B_{n,k}=S_{n,k}(1,2;0)$ is taken from formula~(iii) of
Theorem~\ref{thm:8formulas}.  The only nonvanishing terms in the sum
are those with $0\le n-\nobreak k\le
\left\lfloor{\frac{n}2}\right\rfloor$.

Equation~(\ref{eq:sampleappl}) has a companion identity, which is the
case $(L,R;\allowbreak L',R')=\allowbreak (2,1;2,0)$ of
eq.~(\ref{eq:main1a}).  It is 
\begin{equation}
\begin{aligned}
  \left( {a^\dag}^2 a\adag\right)^n &= \adag^n\: \sum_{k=0}^n S_{n,k}(-2,-1;1)\:
       {a^\dag}^{(n-k)} \left({a^\dag}^2 a\right)^k\\ &= \adag^n\:\sum_{k=0}^n
       2^{-(n-k)} \frac{(2n-k)!}{k!(n-k)!} \: {a^\dag}^{(n-k)}
       \left({a^\dag}^2 a\right)^k.
\end{aligned}
\end{equation}
The expression used here for the numbers $\hat
b_{n,k}^{(1)}=S_{n,k}(-2,-1;1)$, which are the unsigned
$1$\nobreakdash-Bessel numbers of the first kind, comes from
formula~(iv$^\prime$) of Theorem~\ref{thm:8formulas}.  When $r=1$, the
desingularized hypergeometric series ${}_2\hat F_1(2)$ on which
formula~(iv$^\prime$) is based has zero as an upper parameter.  It
therefore terminates with its first term, which is $(-2n+\nobreak
k)^{\overline{n-k}}$; whence the factor $(2n-\nobreak k)!$.

\smallskip
An interesting example of normal ordering is the following.

\begin{proposition}
  One has the normal-ordering identity
  \begin{displaymath}
    \left(\adag^3 a\right)^n =
    \adag^{2n}\, \sum_{k=0}^n\,
\left\{2^{-(n-k)}\, \binom{n}k  \,
{}_2{\hat F}_1\Bigl[
   \begin{smallarray}{c}-(n-k),\:-2n+1 \\ -2n+k\end{smallarray}
     \Bigm|2\,
     \Bigr]
\right\}
\adag^k a^k
  \end{displaymath}
  in~\/$W_{\mathbb{C}}$, with desingularized hypergeometric series as
  coefficients.
\end{proposition}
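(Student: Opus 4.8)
The plan is to derive this as a specialization of the general single‑annihilator normal‑ordering theorem, with the coefficients then put in closed form by one of the Stirling‑number evaluations already established; no genuinely new computation is needed. First I would apply Theorem~\ref{thm:cor1} with $(L,R)=(3,0)$. The word $\Omega=\adag^3 a$ then has excess $e=L+R-1=2$, and since $L+R=3\ge1$ the hypothesis of that theorem is met, so eq.~(\ref{eq:untilnow}) gives at once
\[
  \left(\adag^3 a\right)^n = \left(\adag^2\right)^n \sum_{k=0}^n S_{n,k}(-2,1;\,0)\,\adag^k a^k
  = \adag^{2n}\sum_{k=0}^n S_{n,k}(-2,1;\,0)\,\adag^k a^k ,
\]
the last equality being just $(\adag^2)^n=\adag^{2n}$ in $W_{\mathbb{C}}$.

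The only remaining step is to substitute a closed form for the coefficient $S_{n,k}(-2,1;0)$. This is supplied by formula~(vi) of Theorem~\ref{thm:4formulas}, which gives $S_{n,k}(-2,1;r)$ for every $r\in\mathbb{C}$; taking $r=0$ yields
\[
  S_{n,k}(-2,1;\,0) = 2^{-(n-k)}\binom{n}{k}\,{}_2{\hat F}_1\Bigl[
   \begin{smallarray}{c}-(n-k),\:-2n+1 \\ -2n+k\end{smallarray}
     \Bigm|2\,
     \Bigr],
\]
which is exactly the bracketed coefficient in the proposition. Inserting this into the preceding display completes the argument.

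Because both ingredients are already in place, I do not expect a real obstacle here; the one point deserving comment is that for every admissible $k$ the lower hypergeometric parameter $-2n+k$ is a nonpositive integer (indeed $\le -n$), so the desingularized function ${}_2{\hat F}_1$ --- rather than the ordinary ${}_2F_1$ --- is the correct object, its implicit prefactor $(-2n+k)^{\overline{n-k}}$ being precisely what renders the coefficient an ordinary integer once the factor $2^{-(n-k)}\binom{n}{k}$ is accounted for; this is consistent with the companion identity~(\ref{eq:viewedas}) for $(L,R;L',R')=(2,0;3,0)$. Should one wish to avoid Theorem~\ref{thm:4formulas}, one could instead evaluate $S_{n,k}(-2,1;0)$ directly from the summation formula~(\ref{eq:Sform}) of Theorem~\ref{thm:altdef1}, or identify the triangle as the OEIS entry~\oeisseqnum{A035342}, but appealing to formula~(vi) is the most economical route.
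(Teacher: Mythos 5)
Your proposal is correct and coincides with the paper's own proof: the proposition is obtained as the case $(L,R)=(3,0)$ of Theorem~\ref{thm:cor1} (equivalently, the case $(L,R;L',R')=(3,0;0,0)$ of eq.~(\ref{eq:main1a})), with the coefficient $S_{n,k}(-2,1;0)$ evaluated via formula~(vi) of Theorem~\ref{thm:4formulas} at $r=0$. Your remarks on the desingularized ${}_2\hat F_1$ and the alternative summation formula~(\ref{eq:Sform}) match the paper's surrounding discussion as well.
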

\begin{proof}
  This is the case $(L,R;L',R')=(3,0;0,0)$ of eq.~(\ref{eq:main1a}),
  or equivalently the case $(L,R)=(3,0)$ of Theorem~\ref{thm:cor1},
  with the bracketed expression for $S_{n,k}(-2,1;0)$ coming from
  formula~(vi) of Theorem~\ref{thm:4formulas}.
\end{proof}

A seemingly simpler expression for the bracketed coefficient
$S_{n,k}(-2,1;0)$ in this proposition, based on an explicit summation,
is provided by Theorem~\ref{thm:altdef1}: it~is
\begin{equation}
\label{eq:lesscomb2}
S_{n,k}(-2,1;\,0) = \sum_{j=0}^k(-1)^{k-j} \frac{(j)^{\overline n,2}}{j!(k-j)!}.
\end{equation}
A third summation formula for $S_{n,k}(-2,1;0)$, which is more
combinatorial than~(\ref{eq:lesscomb2}), follows from 
$\mathcal{S}(-2,1;0) = \mathcal{S}(-2,-1;0) \,
\mathcal{S}(-1,1;0)$.  It is
\begin{equation}
\begin{aligned}
  S_{n,k}(-2,1;\,0)=
  \sum_{j=k}^n \hat b_{n,j}\hat L_{j,k}
  &=
  \sum_{j=k}^n 2^{-(n-j)}\, \frac{(n)^{\overline{n-j}}(j)^{\overline{n-j}}}{(n-j)!}
  \cdot
  \binom{j}k (k)^{\overline{j-k}}\\
  &= (k)^{\overline{n-k}}\,
  \sum_{j=k}^n 
  2^{-(n-j)}\, \frac{(n)^{\overline{n-j}}}{(n-j)!} \binom{j}k,
\end{aligned}
\end{equation}
because $S_{n,k}(-2,-1;0)=\hat b_{n,k}$ and $S_{n,k}(-1,1;0)=\hat
L_{n,k}$.  Which formula is the most efficient for computing
$S_{n,k}(-2,1;0)$ depends on the (relative) sizes of $k$, $n-\nobreak
k$, and~$n$.

\smallskip
A sample application of the second expansion identity in
Theorem~\ref{thm:powerful}, eq.~(\ref{eq:main1b}), is the following.
As was noted in Section~\ref{sec:stirlingformulas}, the numbers
$S_{n,k}(-2,1;1)$, which are \oeisseqnum{A265649} in the
OEIS~\cite{OEIS2023}, appear as coefficients in an identity satisfied
by the Hermite polynomials $\Hermiteprob_n(z)$.  Setting
$(L,R;\allowbreak L',R')$ in eq.~(\ref{eq:main1b}) successively equal
to $(1,2;\allowbreak 0,2)$, $(2,1;\allowbreak1,1)$, and
$(3,0;\allowbreak 2,0)$, for each of which $E_L=e=\nobreak2$ and
$E_R=e'=\nobreak1$, yields the triple of expansions
\begin{subequations}
  \begin{align}
    \left(\adag^1 a \adag^2\right)^n \adag^n &= \adag^{2n} \:\sum_{k=0}^n S_{n,k}(-2,1;\,1)\, \left(\adag^{0} a \adag^{2}\right)^k \adag^{(n-k)},\\
    \left(\adag^2 a \adag^1\right)^n \adag^n &= \adag^{2n} \:\sum_{k=0}^n S_{n,k}(-2,1;\,1)\, \left(\adag^{1} a \adag^{1}\right)^k \adag^{(n-k)},\\
    \left(\adag^3 a \adag^0\right)^n \adag^n &= \adag^{2n} \:\sum_{k=0}^n S_{n,k}(-2,1;\,1)\, \left(\adag^{2} a \adag^{0}\right)^k \adag^{(n-k)},
  \end{align}
\end{subequations}
where the $S_{n,k}(-2,1;1)$ appear as coefficients.  The connection to
the Hermite polynomial identity~(\ref{eq:Hermiteidentityinverse}), if
any, is unclear.


\section{Eulerian-number formulas and combinatorics}
\label{sec:eulerianformulas}

As with the numbers $S_{n,k}(\alpha,\beta;r)$, it is desirable to be
able to compute the generalized Eulerian numbers
$E_{n,k}(\alpha,\beta;r)$ efficiently, and to interpret them
combinatorially if possible.  They can be computed from the summation
formula (\ref{eq:Eform}) of Theorem~\ref{thm:altdef1}, or the
triangular recurrence~(\ref{eq:ErecGKP}) of
Theorem~\ref{thm:recurrences}.  They can also be computed with the aid
of Theorem~\ref{thm:1}, according to which each row of the matrix
${\mathcal{E}}(\alpha,\beta;r)=[E_{n,k}(\alpha,\beta;r)]$ is the
(inverse) binomial transform of the corresponding row of
$\hat{\mathcal{S}}(\alpha,\beta;r)=[\hat S_{n,k}(\alpha,\beta;r)]$.

Similarly to the $S_{n,k}(\alpha,\beta;r)$, closed-form expressions
are available for certain choices of $\alpha,\beta$ and~$r$.  For
deriving additional closed-form expressions one can use homogeneity:
$E_{n,k}(\lambda \alpha,\lambda \beta;\lambda r)$ equals
$\lambda^{n}E_{n,k}(\alpha,\beta;r)$.  The reflection
property~(\ref{eq:reflectionprop}), according to which
$E_{n,k}(\alpha,\beta;r)$ equals
$E_{n,n-k}(-\alpha,\beta;\beta-\nobreak r)$, can also be used.

\begin{theorem}
\label{thm:Eformulas}
For all\/ $r\in\mathbb{C}$,
\begin{itemize}
 \item[{\rm (i)}] $E_{n,k}(\beta,\beta;\,r) = \binom{n}k
   (r)^{\underline{n-k},\beta}(\beta n-r)^{\underline{k},\beta}$,
 \item[{\rm (ii)}]  $E_{n,k}(-\beta,\beta;\,r) = \binom{n}k (\beta k+r)^{\overline{n-k},\beta}(\beta-r)^{\underline{k},\beta}$.
\end{itemize}
\allowbreak

\noindent
Additionally, one has the formulas
\begin{itemize}
 \item[{\rm (iii)}] $E_{n,k}(-1,2;\,0) =
   \begin{cases}
     \delta_{n,0}, & $k=0$,\\
     n!\,\binom{n+1}{2k-1}, & $k=1,\dots,n$,
   \end{cases}$
 \item[{\rm (iv)}] $E_{n,k}(-1,2;\,1) = n!\,\binom{n+1}{2k}$,
 \item[{\rm (v)}] $E_{n,k}(-1,2;\,2) = n!\,\binom{n+1}{2k+1}$.
\end{itemize}

\noindent
Moreover, $E_{n,k}(-1,2;r)$ for any\/ $r\in\mathbb{N}$ can be computed from the following.
\begin{itemize}
 \item[{\rm (vi)}]
   For all\/ $p\in\mathbb{N}$ and\/ $\zeta\in\{0,1\}$, 
   \begin{align*}
     &E_{n,k}(-1,2;\,2-\zeta+2p)\\
     & {}=n!\,\binom{n+1}{2k+2p+1-\zeta}
     - (-1)^{k+p} \sum_{\ell=0}^{p-1}\, (-1)^{\ell} (2-\zeta+2\ell)^{\overline n}
     \binom{n+1}{k+p-\ell}.
   \end{align*}
\end{itemize}
\end{theorem}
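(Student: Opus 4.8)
I would prove the six formulas in the order (i), (ii), (iv), (v), (iii), (vi): establish (i) and then (iv),(v) from the explicit bivariate EGF~(\ref{eq:EEGF}); deduce (ii) from (i) via the reflection identity~(\ref{eq:reflectionprop}); and deduce (iii) and (vi) from (iv),(v) via the difference equation~(\ref{eq:gaa}) on the third parameter. The homogeneity relation~(\ref{eq:homogeneity}) and the edge value $E_{n,0}=(r)^{\underline n,\alpha}$ are the only other ingredients needed.

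\textbf{Parts (i) and (ii).} By homogeneity it suffices to prove (i) for $\beta=1$; the general case then follows by replacing $r$ with $r/\beta$, multiplying by $\beta^n$, and absorbing powers of $\beta$ into the stride-$\beta$ factorial powers. For $(\alpha,\beta)=(1,1)$ the denominator of~(\ref{eq:EEGF}) factors as $(1-t)(1-tz)$, so after cancelling $1-t$ one gets the compact EGF $E(t,z)=(1+(1-t)z)^r/(1-tz)$. Expanding the numerator by the binomial series and the denominator geometrically and reading off the coefficient of $t^kz^n/n!$ produces $\frac{n!}{(n-k)!}(r)^{\underline{n-k}}\sum_{i=0}^{k}(-1)^i\binom{r-n+k}{i}$; the partial-sum identity $\sum_{i=0}^{k}(-1)^i\binom{s}{i}=(-1)^k\binom{s-1}{k}$ then collapses this to $\binom{n}{k}(r)^{\underline{n-k}}(n-r)^{\underline k}$, which is (i) at $\beta=1$. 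Part (ii) follows from (i) and the reflection identity~(\ref{eq:reflectionprop}), which gives $E_{n,k}(-\beta,\beta;r)=E_{n,n-k}(\beta,\beta;\beta-r)$; substituting (i) into the right-hand side, the only point to check is the factorial-power identity $(\beta(n-1)+r)^{\underline{n-k},\beta}=(\beta k+r)^{\overline{n-k},\beta}$, and both sides equal the product $\prod_{i=k}^{n-1}(\beta i+r)$ read in opposite orders.

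\textbf{Parts (iv), (v) and (iii).} For $(\alpha,\beta)=(-1,2)$ a similar simplification of~(\ref{eq:EEGF}) applies: writing $1-z+tz=1-(1-t)z$, clearing the negative powers by multiplying numerator and denominator by $(1-(1-t)z)^2$, and then cancelling the factor $1-t$ with the help of $(1-(1-t)z)^2-t=(1-t)(1-2z+(1-t)z^2)$, one obtains $E(t,z)=\frac{(1-(1-t)z)^{2-r}}{(1-z)^2-tz^2}$. Setting $r=2$ makes the numerator $1$; expanding $(1-z)^{-2}\sum_{k\ge0}t^k z^{2k}(1-z)^{-2k}$ and extracting coefficients gives $E_{n,k}(-1,2;2)=n!\binom{n+1}{2k+1}$, which is (v). Setting $r=1$ makes the numerator $(1-z)+tz$; splitting it into two pieces and expanding as before gives $[t^kz^n]E=\binom{n}{2k}+\binom{n}{2k-1}=\binom{n+1}{2k}$, which is (iv). (As a cross-check one may instead verify that $n!\binom{n+1}{2k+r-1}$ satisfies the GKP recurrence~(\ref{eq:ErecGKP}) at $(\alpha,\beta)=(-1,2)$ with the correct left-edge value $E_{n,0}=(r)^{\overline n}$; with $m=2k+r-1$ the inductive step is exactly $(n+1)\binom{n+2}{m+2}=(n+m+3)\binom{n+1}{m+2}+(n-m+1)\binom{n+1}{m}$.) Part (iii) is then immediate from the difference equation~(\ref{eq:gaa}), which at $(\alpha,\beta)=(-1,2)$ reads $E_{n,k}(-1,2;r+2)=E_{n,k+1}(-1,2;r)+(-1)^k\binom{n+1}{k+1}(r)^{\overline n}$: taking $r=0$ and using $(0)^{\overline n}=0$ for $n\ge1$ gives $E_{n,k}(-1,2;0)=E_{n,k-1}(-1,2;2)=n!\binom{n+1}{2k-1}$ for $n,k\ge1$, while $E_{n,0}(-1,2;0)=(0)^{\overline n}=\delta_{n,0}$ is the edge value — precisely the two clauses of (iii).

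\textbf{Part (vi) and the expected obstacle.} For (vi), start from $E_{n,k}(-1,2;2-\zeta)=n!\binom{n+1}{2k+1-\zeta}$ (this is (v) when $\zeta=0$ and (iv) when $\zeta=1$) and iterate the displayed specialization of~(\ref{eq:gaa}) exactly $p$ times, each step raising the third parameter by $2$ and the column index by $1$. The recursion telescopes, leaving $E_{n,k}(-1,2;2-\zeta+2p)$ equal to $n!\binom{n+1}{2(k+p)+1-\zeta}$ plus the accumulated correction $\sum_{i=0}^{p-1}(-1)^{k+i}\binom{n+1}{k+i+1}(2-\zeta+2(p-1-i))^{\overline n}$; reindexing by $\ell=p-1-i$ and extracting the sign $-(-1)^{k+p}$ turns this into exactly the right-hand side of (vi), with $(2-\zeta+2\ell)^{\overline n}=(2-\zeta+2\ell)^{\underline n,-1}$. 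No step is conceptually deep; I expect the only real care to be in the generating-function bookkeeping for (iv),(v) — verifying the $1-t$ cancellation and correctly expanding $1/((1-z)^2-tz^2)$ in powers of $t$ — and in keeping signs and indices straight through the telescoping sum in (vi).
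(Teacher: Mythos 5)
Your proposal is correct, but for parts (i)--(v) it takes a genuinely different route from the paper. The paper's proof simply verifies formulas (i)--(v) by confirming that they satisfy the GKP recurrence~(\ref{eq:ErecGKP}) (together with the edge value $E_{n,0}=(r)^{\underline n,\alpha}$), and notes that (i) and (ii) are equivalent under the reflection property; you instead extract (i), (iv) and (v) directly from the bivariate EGF~(\ref{eq:EEGF}) after factoring its denominator at $(\alpha,\beta)=(1,1)$ and $(-1,2)$, deduce (ii) from (i) by reflection as the paper does, and obtain (iii) from (v) by running the difference equation~(\ref{eq:gaa}) downward in~$r$ rather than as a separate recurrence check. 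I verified the two computational pivots of your argument --- the collapse of $\sum_{i=0}^{k}(-1)^i\binom{r-n+k}{i}$ to $(-1)^k\binom{r-n+k-1}{k}$ in~(i), and the splitting of the numerator $(1-z)+tz$ giving $\binom{n}{2k}+\binom{n}{2k-1}=\binom{n+1}{2k}$ in~(iv) --- and they are right. Your derivation is constructive (it shows where the binomial-coefficient formulas come from) at the cost of more generating-function bookkeeping; the paper's verification is shorter but unilluminating. One small point to tidy: the EGF of Theorem~\ref{thm:fromvertical} is stated for $\alpha,\beta\neq0$, and your homogeneity reduction of~(i) to $\beta=1$ likewise assumes $\beta\neq0$, so the $\beta=0$ case of (i),(ii) should be dispatched by continuity (each $E_{n,k}$ is a polynomial in $\alpha,\beta,r$), as the paper implicitly does. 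For part~(vi) your telescoping of~(\ref{eq:gaa}) from the base cases (iv),(v) is exactly the paper's induction on~$p$; the reindexing $\ell=p-1-i$ does reproduce the stated sign $-(-1)^{k+p}$, so that step checks out as well.
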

\begin{proof}
Formulas (i)--(v) are verified by confirming that they satisfy the
recurrence~(\ref{eq:ErecGKP}).  Also, by the reflection property
(i),(ii) are equivalent.

The cases $\zeta=1,0$ of formula~(vi) are proved separately, by
induction on~$p$.  Their respective base subcases (with $p=0$) are
formulas (iv) and~(v), and the inductive step in each proof
(an~incrementing of the third parameter by~$\beta=2$) uses the second
difference equation in eq.~(\ref{eq:gaa}), which is a formula for
$\Delta_{r,\beta}E_{n,k}(\alpha,\beta;r)$.
\end{proof}

For computing not individual numbers $E_{n,k}$ but entire row
polynomials, the following simple formula may be of use.  The $n$'th
row polynomial is
\begin{equation}
    E_n(t)= \sum_{k=0}^n E_{n,k}(\alpha,\beta;\,r)\,t^k =(1-t)^{n+1}
    \sum_{j=0}^\infty (\beta j+r)^{\underline n,\alpha} \,t^j.
\end{equation}
(It is not immediately obvious that the last expression, viewed as a
formal power series in~$t$, is in~fact a polynomial in~$t$.)  This
follows from the summation formula~(\ref{eq:Eform}), or alternatively
the EGF~(\ref{eq:EEGF}).  When $(\alpha,\beta;r)=(0,1;0)$, it
specializes to a known formula for the classical Eulerian
polynomials~\cite[p.~245]{Comtet74}, and a version with $\alpha=0$ but
$\beta,r$ arbitrary was obtained previously~\cite{Mezo2016}.

For some choices of the parameters $\alpha,\beta,r$, combinatorial
interpretations of the numbers $E_{n,k}(\alpha,\beta;r)$ are known.
The standard interpretation (when $n>0$) of the classical numbers
$E_{n,k}(0,1;0)$ and $E_{n,k}(0,1;1)$ (i.e., $A_{n,k}$ and
$\eulerian{n}k = A_{n,k+1}$, identical except for a horizontal shift
by unity) has already been mentioned.  The former counts the number of
permutations of the ordered set $\{1,\dots,n\}$ that have exactly
$k$~descents, and the latter is an incremented version.  

The numbers $E_{n,k}(0,2;1) \eqdef {\eulerian{n}{k}}_B$ are less well
known.  They are the MacMahon or type\nobreakdash-$B$ Eulerian
numbers, and count signed permutations of the ordered set
$\{-n,\dots,-1\}\cup\{1,\dots,n\}$ with exactly $k$ ``signed
descents.''  (See~\cite{Bagno2022,Brenti94} and \oeisseqnum{A060187}
in the OEIS\null.)  They are related by the case
$(\alpha,\beta;r)=(0,2;1)$ of the generalized Worpitzky
identity~(\ref{eq:Edef1}) to the type\nobreakdash-$B$ Stirling numbers
$S_{n,k}(0,2;1)\eqdef {\stirsub{n}k}_B$, which can also be interpreted
combinatorially~\cite{Bagno2019}.  

The number triangles $E_{n,k}(0,3;2)$ and $E_{n,k}(0,4;3)$ can be
found in the OEIS as~well, as \oeisseqnum{A225117} and
\oeisseqnum{A225118}.  Like the classical and type\nobreakdash-$B$
Eulerian numbers, they belong to the family of triangles of the form
$E_{n,k}(0,r+\nobreak 1;\nobreak r)$, $r\in\mathbb{N}$, each of which
has an enumerative interpretation~\cite{Wang2023}.  In fact, so does
any triangle $E_{n,k}(0,\beta;r)$ in which $\beta,r\in\mathbb{N}$ and
$r\le\nobreak\beta$ (see~\cite{Ramirez2018}).

An interesting interpretation of $E_{n,k}(-1,\beta;r)/n!$, when
$\beta$~is a positive integer and $r\in\mathbb{N}$ with $r\le\beta$,
should also be mentioned.  It counts the number of words
$s_1s_2,\dots,\nobreak s_n$ with
$s_i\in\{0,\dots,\nobreak\beta-\nobreak 1\}$ that have exactly $k$
descents, provided each word is prefixed with a hidden, ``zeroth''
letter $s_0=\beta\nobreak-r$ (which affects the count of the number of
descents).  Closed-form expressions for $E_{n,k}(-1,2;r)/n!$ with
$r=0,1,2$ are supplied by parts (iii),\allowbreak(iv),(v) of
Theorem~\ref{thm:Eformulas} above.  The latter two triangles appear in
the OEIS as \oeisseqnum{A228955} and \oeisseqnum{A131980}, and the
triangles $E_{n,k}(-1,3;r)/n!$ with $r=1$ and $r=3$ as
\oeisseqnum{A178618} and~\oeisseqnum{A120906}.

Additionally worth mentioning is the recent
interpretation~\cite{Herscovici2020} of the ``degenerate'' Eulerian
numbers of Carlitz~\cite{Carlitz79}, which in present notation would
be denoted by $E_{n,k}(\alpha,1;r)$, in the case when $r=1$.  But
in~general, combinatorial interpretations of the
$E_{n,k}(\alpha,\beta;r)$ have yet to be developed.

\section{Ordering identities (generalized Eulerian kind)}
\label{sec:eulerianids}

Operator ordering identities in~$W_{\mathbb{C}}$ based on the
generalized Eulerian numbers $E_{n,k}$ will now be derived.  Each is
the expansion of the $n$'th power $\Omega^n$ of a single-annihilator
word~$\Omega$ in twistings of the $n$'th power~$\Omega{{}^\prime{}^n}$
of another such word.  More precisely, each provides an expansion of
$\left(\adag^p\right)^n\Omega^n\left(\adag^q\right)^n$, where $p,q$
are sufficiently large, as a combination of terms of the form
$\adag^{p'}\Omega'{}^n\adag^{q'}$, with each term having the same
value of $p'+\nobreak q'$, which is a certain multiple of~$n$.

The main theorem is Theorem~\ref{thm:powerful2}, and like its Stirling
counterpart Theorem~\ref{thm:powerful}, it arises from a simpler
version in~$\tilde W^{\rm bal}_{\mathbb{C}}$ (here,
Theorem~\ref{thm:secondmain}).  The version in~$\tilde W^{\rm
  bal}_{\mathbb{C}}$ comes in~turn from an identity in (commutative)
polynomial algebra, by replacing an indeterminate~$x$ by~$xD$.  Here,
this is the generalized Worpitzky identity~(\ref{eq:Edef1}), which in
the linear-algebraic approach of this paper serves as the definition
of the numbers~$E_{n,k}$.

\begin{theorem}
For all\/ $L,R,L',R'\in\mathbb{C}$, one has four Eulerian-type expansion identities
in\/~$\tilde W_{\mathbb{C}}^{\rm bal}${\rm:}
\label{thm:secondmain}
{
\begin{subequations}
\begin{align}
&
\begin{aligned}
  &n!\,(-e')^n
  \left(x^{-e}\right)^n \left(x^L D x^R\right)^n \\
  &\qquad{}=\sum_{k=0}^n E_{n,k}\left(-e,-e';\,R-R'\right)\left(x^{-e'}\right)^k \left(x^{L'}Dx^{R'}\right)^n \left(x^{-e'}\right)^{n-k},
\end{aligned}
\label{eq:secondmain2a}
\\[\jot]
&
\begin{aligned}
  &n!\,(e')^n
\left(x^{-e}\right)^n \left(x^L D x^R\right)^n \\
  &\qquad{}=\sum_{k=0}^n E_{n,k}\left(-e,e';\,R+L'-1\right) \left(x^{-e'}\right)^{n-k}   \left(x^{L'}Dx^{R'}\right)^n \left(x^{-e'}\right)^k , 
\end{aligned}
\label{eq:secondmain2b}
\\
\shortintertext{and}
\stepcounter{parentequation}\gdef\theparentequation{\arabic{parentequation}}\setcounter{equation}{0}
&
\begin{aligned}
  &n!\,(-e')^n
  \left(x^L D x^R\right)^n\left(x^{-e}\right)^n \\
  &\qquad{}=\sum_{k=0}^n E_{n,k}\left(e,-e';\,1-L-R'\right)\left(x^{-e'}\right)^k \left(x^{L'}Dx^{R'}\right)^n  \left(x^{-e'}\right)^{n-k} ,
\end{aligned}
\label{eq:secondmain2c}
\\[\jot]
&
\begin{aligned}
  &n!\,(e')^n
  \left(x^L D x^R\right)^n\left(x^{-e}\right)^n \\
  &\qquad{}=\sum_{k=0}^n E_{n,k}
  \left(e,e';\,L'-L\right) \left(x^{-e'}\right)^{n-k}   \left(x^{L'}Dx^{R'}\right)^n  \left(x^{-e'}\right)^{k} , 
\end{aligned}
\label{eq:secondmain2d}
\end{align}
\end{subequations}}where\/ $e=L+R-1$ and\/ $e'=L'+R'-1$.
\end{theorem}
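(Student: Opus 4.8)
The plan is to parallel the derivation of the Stirling-type expansions of Theorem~\ref{thm:firstmain}, using the generalized Worpitzky identity~(\ref{eq:Edef1}) in place of the Hsu--Shiue definition~(\ref{eq:Sdef1}). Because~(\ref{eq:Edef1}) is an identity of polynomials in a single indeterminate, substituting $x\mapsto xD+R$ turns it into a valid identity in $\tilde W_{\mathbb{C}}$, and in fact in $\tilde W_{\mathbb{C}}^{\rm bal}$ since $xD+R$ is balanced. I would establish~(\ref{eq:secondmain2a}) first and then obtain~(\ref{eq:secondmain2b})--(\ref{eq:secondmain2d}) from it by the same reversal-and-reparametrization steps (or the formal adjoint~$*$) used to pass between the four parts of Theorem~\ref{thm:firstmain}.

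For~(\ref{eq:secondmain2a}): apply the substitution $x\mapsto xD+R$ to~(\ref{eq:Edef1}) with $\alpha=-e$, $\beta=-e'$, $r=R-R'$. Writing $\beta=-e'$ and using $R-r=R'$, the result reads $n!\,(-e')^n(xD+R)^{\underline n,-e}=\sum_{k=0}^n E_{n,k}(-e,-e';R-R')\,[xD+R'+\beta(n-k)]^{\underline n,\beta}$. The left-hand side is $n!\,(-e')^n(x^{-e})^n(x^LDx^R)^n$ by~(\ref{eq:majorrole2a}) with $(\alpha,r)=(-e,R)$, which is the left-hand side of~(\ref{eq:secondmain2a}). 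It then remains only to show that each summand equals $(x^{-e'})^k(x^{L'}Dx^{R'})^n(x^{-e'})^{n-k}$.

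This last identification is the crux, and the one step needing genuine care. It rests on two facts. First, $x^{-c}(xD)x^c=xD+c$ (from $Dx^c=x^cD+cx^{c-1}$), so conjugation by a power of $x$ shifts the argument of a factorial power: $x^{c}(xD+s)^{\underline n,\beta}x^{-c}=(xD+s-c)^{\underline n,\beta}$ for all $c,s,\beta$. Second,~(\ref{eq:majorrole2a}) with $(\alpha,r)=(-e',R')$ gives $(x^{-e'})^n(x^{L'}Dx^{R'})^n=(xD+R')^{\underline n,-e'}$, hence $(x^{L'}Dx^{R'})^n=(x^{e'})^n(xD+R')^{\underline n,-e'}$. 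Substituting this into $(x^{-e'})^k(x^{L'}Dx^{R'})^n(x^{-e'})^{n-k}$ and merging powers of $x$ (using $(x^a)^m=x^{am}$) leaves $x^{e'(n-k)}(xD+R')^{\underline n,-e'}x^{-e'(n-k)}$, which by the conjugation rule with $c=e'(n-k)$ is $(xD+R'-e'(n-k))^{\underline n,-e'}=(xD+R'+\beta(n-k))^{\underline n,\beta}$, as wanted. The hypothesis $\beta\neq0$ of~(\ref{eq:Edef1}) excludes only $e'=0$; there~(\ref{eq:secondmain2a}) still holds, since~(\ref{eq:Edef1}) itself is valid for every $\beta$ (both sides are continuous in $\beta$ over $\tilde W_{\mathbb{C}}^{\rm bal}$ and agree for $\beta\neq0$; when $\beta=0$ both sides vanish for $n\ge1$, as $\sum_kE_{n,k}(\alpha,0;r)=0$).

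Finally,~(\ref{eq:secondmain2b})--(\ref{eq:secondmain2d}) follow from~(\ref{eq:secondmain2a}) exactly as in the proof of Theorem~\ref{thm:firstmain}. In the $k$-th summand of~(\ref{eq:secondmain2a}) one absorbs the leftmost flanking factor to form $(x^{-e'})^n(x^{L'}Dx^{R'})^n$, applies the self-adjoint reversal identity~(\ref{eq:powerfullemma}) to it, merges powers of $x$ again, and renames $L'\leftarrow 1-R'$, $R'\leftarrow1-L'$; this yields~(\ref{eq:secondmain2b}), the third parameter $R-R'$ becoming $R+L'-1$ and the prefactor $(-e')^n$ becoming $(e')^n$. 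Doing the analogous manipulation on the left-hand side $(x^{-e})^n(x^LDx^R)^n$, with renamings $L\leftarrow1-R$, $R\leftarrow1-L$, yields~(\ref{eq:secondmain2c}); doing both yields~(\ref{eq:secondmain2d}), with third parameters $1-L-R'$ and $L'-L$ respectively. Equivalently, the formal adjoint $*$ (which fixes $\tilde W_{\mathbb{C}}^{\rm bal}$) carries~(\ref{eq:secondmain2a}) to~(\ref{eq:secondmain2d}) and~(\ref{eq:secondmain2b}) to~(\ref{eq:secondmain2c}), up to the reflection identity~(\ref{eq:reflectionprop}) and the reindexing $k\mapsto n-k$. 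The main obstacle throughout is the power-of-$x$ bookkeeping in the crux identification of the third paragraph; the rest is the parameter-chasing already carried out for the Stirling case.
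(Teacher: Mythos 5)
Your proposal is correct and follows essentially the same route as the paper's proof: substitute $xD+r_1$ (here $xD+R$) into the generalized Worpitzky identity~(\ref{eq:Edef1}), identify both sides with the operator words via the generalized Boole identity~(\ref{eq:majorrole2a}) together with conjugation by powers of~$x$, and then obtain the remaining three identities by reparametrization through the self-adjoint identity~(\ref{eq:powerfullemma}) and/or the adjoint and reflection properties. The only differences are cosmetic --- the paper passes from~(\ref{eq:secondmain2a}) to~(\ref{eq:secondmain2b}) via homogeneity, the reflection property~(\ref{eq:reflectionprop}), and the reindexing $k\mapsto n-k$ rather than via~(\ref{eq:powerfullemma}), and your explicit continuity argument for the degenerate case $e'=0$ is a small bonus the paper leaves implicit.
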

\begin{proof}
Substitute $xD+r_1$ for $x$ in the identity~(\ref{eq:Edef1}), with
$r=r_1-\nobreak r_2$.  This yields
\begin{equation}
  n!\,\beta^n(xD+r_1)^{\underline n,\alpha} = \sum_{k=0}^n E_{n,k}(\alpha,\beta;\,r)\,\left[xD+\beta(n-k)+r_2\right]^{\underline n,\beta}.
\end{equation}
  Applying (\ref{eq:majorrole2a}) to both sides converts this to
  \begin{equation}
    \begin{aligned}
      &n!\,\beta^n(x^\alpha)^n\left(x^{1-\alpha-r_1}Dx^{r_1}\right)^n\\
      &\qquad{}= \sum_{k=0}^n E_{n,k}(\alpha,\beta;\,r)
      x^{-r_2}x^{-\beta(n-k)} \left[(x^\beta)^n (x^{1-\beta}D)^n\right]
      x^{\beta(n-k)} x^{r_2} \\
      &\qquad{}= \sum_{k=0}^n E_{n,k}(\alpha,\beta;\,r)
      \left(x^{\beta}\right)^k\left[x^{1-\beta-r_2}Dx^{r_2}\right]^n (x^\beta)^{n-k},
    \end{aligned}
  \end{equation}
  and (\ref{eq:secondmain2a}) follows by letting $L=1-\nobreak
  \alpha-\nobreak r_1$, $R=r_1$, $L'=1-\nobreak \beta-\nobreak r_2$,
  $R'=r_2$.  The expansions
  (\ref{eq:secondmain2a}),(\ref{eq:secondmain2b}) are equivalent by
  homogeneity and the reflection property~(\ref{eq:reflectionprop}),
  as one sees by replacing the summation index~$k$ by~$n-\nobreak k$.
  The expansions (\ref{eq:secondmain2c}),(\ref{eq:secondmain2d}) are
  too, and up~to parameter renamings they are the respective adjoints
  of (\ref{eq:secondmain2b}),(\ref{eq:secondmain2a}).  They also
  follow from (\ref{eq:secondmain2a}),(\ref{eq:secondmain2b}) by
  applying the self-adjoint identity~(\ref{eq:powerfullemma}).
\end{proof}

\begin{theorem}
\label{thm:powerful2}
For all\/ $L,R,L',R'\in\mathbb{N}$, one has four Eulerian-type expansion
identities in\/~$W_{\mathbb{C}}${\rm:}
{
\begin{subequations}
\begin{align}
&
\begin{aligned}
  &n!\, (-e')^n \left({a^\dag}^{(E_L-e)}\right)^n \left({a^\dag}^L a {a^\dag}^R\right)^n    \left(\adag^{E_R}\right)^n \\
  &\qquad{}=\sum_{k=0}^n E_{n,k}\left(-e,-e';\,R-R'\right) \\
  &\qquad\qquad{}\times\left({a^\dag}^{E_L}\right)^{n-k} \left({a^\dag}^{({E_L}-e')}\right)^k   \left({a^\dag}^{L'}a{a^\dag}^{R'}\right)^n \left(\adag^{(E_R-e')}\right)^{n-k} \left(\adag^{E_R}\right)^k,
\end{aligned}
\label{eq:2main1a}
\\[\jot]
&
\begin{aligned}
  &n!\, (e')^n \left({a^\dag}^{(E_L-e)}\right)^n \left({a^\dag}^L a {a^\dag}^R\right)^n   \left(\adag^{E_R}\right)^n \\
  &\qquad{}=\sum_{k=0}^n E_{n,k}\left(-e,e';\,R+L'-1\right)\\
  &\qquad\qquad{}\times\left({a^\dag}^{E_L}\right)^{k} \left(\adag^{(E_L-e')}\right)^{n-k}   \left({a^\dag}^{L'}a{a^\dag}^{R'}\right)^n     \left({a^\dag}^{(E_R-e')}\right)^k  \left(\adag^{E_R}\right)^{n-k},
\end{aligned}
\label{eq:2main1b}
\\
\shortintertext{and}
\stepcounter{parentequation}\gdef\theparentequation{\arabic{parentequation}}\setcounter{equation}{0}
&
\begin{aligned}
  &n!\, (-e')^n \left({a^\dag}^{E_L}\right)^n \left({a^\dag}^L a {a^\dag}^R\right)^n \left({a^\dag}^{(E_R-e)}\right)^n \\
  &\qquad{}=\sum_{k=0}^n E_{n,k}\left(e,-e';\,1-L-R'\right) \\
  &\qquad\qquad{}\times\left({a^\dag}^{E_L}\right)^{n-k} \left({a^\dag}^{(E_L-e')}\right)^k     \left({a^\dag}^{L'}a{a^\dag}^{R'}\right)^n   \left(\adag^{(E_R-e')}\right)^{n-k} \left(\adag^{E_R}\right)^{k},
\end{aligned}
\label{eq:2main2a}
\\[\jot]
&
\begin{aligned}
  &n!\, (e')^n \left(\adag^{E_L}\right)^n  \left({a^\dag}^L a {a^\dag}^R\right)^n \left({a^\dag}^{(E_R-e)}\right)^n \\
  &\qquad{}=\sum_{k=0}^n E_{n,k}\left(e,e';\,L'-L\right)\\
  &\qquad\qquad{}\times\left(\adag^{E_L}\right)^k  \left(\adag^{(E_L-e')}\right)^{n-k}
   \left({a^\dag}^{L'}a{a^\dag}^{R'}\right)^n    \left(\adag^{(E_R-e')}\right)^k \left(\adag^{E_R}\right)^{n-k},
\end{aligned}
\label{eq:2main2b}
\end{align}
\end{subequations}}where\/ $e=L+R-1$ and\/ $e'=L'+R'-1$ are the excesses of the words
$\Omega={a^\dag}^L a {\adag}^R$ and $\Omega'={a^\dag}^{L'} a
{\adag}^{R'}$\!, and the quantities\/ $E_L=E_L(e,e')$, $E_R=E_R(e,e')$
are as follows:
\begin{center}
\begin{tabular}{c l l}
  Identity & $\hfil E_L$ & $\hfil E_R$ \\
  \hline
  {\rm(\ref{eq:2main1a}),(\ref{eq:2main1b})}\: & \,$\max(e,e',0)$ & \:$\max(e',0)$ \\
  {\rm(\ref{eq:2main2a}),(\ref{eq:2main2b})}\: & \,$\max(e',0)$ & \:$\max(e,e',0)$.
\end{tabular}
\end{center}
\end{theorem}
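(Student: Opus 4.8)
The plan is to obtain Theorem~\ref{thm:powerful2} from Theorem~\ref{thm:secondmain} by precisely the device that produced Theorem~\ref{thm:powerful} from Theorem~\ref{thm:firstmain}: restrict the four complex parameters to $\mathbb{N}$, left- and right-multiply each identity of $\tilde W_{\mathbb{C}}^{\rm bal}$ by suitable pure powers of $x$ so as to remove every negative exponent, and then substitute $\adag,a$ for $x,D$ to land in $W_{\mathbb{C}}$. Concretely, I would take each of (\ref{eq:secondmain2a})--(\ref{eq:secondmain2d}), impose $L,R,L',R'\in\mathbb{N}$, left-multiply by $(x^{E_L})^n$ and right-multiply by $(x^{E_R})^n$ with $E_L=E_L(e,e')$ and $E_R=E_R(e,e')$ as in the table, and then coalesce or split adjacent pure powers of $x$ using $x^ax^b=x^{a+b}$.

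The core of the argument is the exponent bookkeeping, which I would carry out one identity at a time. For (\ref{eq:secondmain2a}), on the left-hand side $(x^{E_L})^n(x^{-e})^n$ collapses to $(x^{E_L-e})^n$ and the extra $(x^{E_R})^n$ is simply appended, so one needs $E_L\ge e$ and $E_R\ge0$; on the right-hand side one uses the splittings $(x^{E_L})^n(x^{-e'})^k=(x^{E_L})^{n-k}(x^{E_L-e'})^k$ and $(x^{-e'})^{n-k}(x^{E_R})^n=(x^{E_R-e'})^{n-k}(x^{E_R})^k$, which require in addition $E_L\ge e'$, $E_L\ge0$, $E_R\ge e'$, $E_R\ge0$. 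With $E_L=\max(e,e',0)$ and $E_R=\max(e',0)$ all of these hold, and since $L',R'\in\mathbb{N}$ the word $x^{L'}Dx^{R'}$ already has nonnegative exponents and $D$ enters only to the first power throughout; hence every power of $x$ and of $D$ on either side is a nonnegative integer, and the regrouped factors reproduce exactly the left-to-right pattern displayed in (\ref{eq:2main1a}). Identity (\ref{eq:secondmain2b}) is handled the same way, the splittings now being $(x^{E_L})^n(x^{-e'})^{n-k}=(x^{E_L})^k(x^{E_L-e'})^{n-k}$ and $(x^{-e'})^k(x^{E_R})^n=(x^{E_R-e'})^k(x^{E_R})^{n-k}$; since (\ref{eq:secondmain2b}) is the reflection $k\mapsto n-k$ of (\ref{eq:secondmain2a}), the same inequalities suffice and one recovers (\ref{eq:2main1b}). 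Identities (\ref{eq:secondmain2c}) and (\ref{eq:secondmain2d}) are, up to parameter renamings, the formal adjoints of (\ref{eq:secondmain2b}) and (\ref{eq:secondmain2a}); the adjoint reverses each word, so ``left'' and ``right'' exchange roles, which is exactly why the table records $E_L$ and $E_R$ swapped for this second pair. The same case-by-case check --- now $(x^L Dx^R)^n(x^{-e})^n$ sitting at the right of the left-hand side forces $E_R\ge e$, so $E_R=\max(e,e',0)$, while $E_L=\max(e',0)$ clears the right-hand side --- yields (\ref{eq:2main2a}) and (\ref{eq:2main2b}).

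Once all exponents are nonnegative integers, each resulting identity already lies in the polynomial subalgebra generated by $x,D$, so replacing $x$ by $\adag$ and $D$ by $a$ turns it into an identity in $W_{\mathbb{C}}$, completing the proof. I expect the only real obstacle to be the care required in the bookkeeping: one must verify separately for each of the four cases both that the stated $E_L,E_R$ are large enough and that the coalescing and splitting of $x$-powers reproduces precisely the arrangement of factors shown in (\ref{eq:2main1a})--(\ref{eq:2main2b}), since the reflection $k\mapsto n-k$ and the formal adjoint $*$ permute those factors differently in each identity. No new idea beyond Theorem~\ref{thm:secondmain} and Lemma~\ref{lem:major} (already used in its proof) is needed.
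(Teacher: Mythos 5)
Your proposal is correct and is exactly the paper's proof: restrict the parameters to $\mathbb{N}$, left- and right-multiply the four identities of Theorem~\ref{thm:secondmain} by $(x^{E_L})^n$ and $(x^{E_R})^n$ with the tabulated values, and substitute $\adag,a$ for $x,D$. The exponent bookkeeping you spell out (the coalescings $(x^{E_L})^n(x^{-e})^n=(x^{E_L-e})^n$ and the splittings such as $(x^{E_L})^n(x^{-e'})^k=(x^{E_L})^{n-k}(x^{E_L-e'})^k$, together with the inequalities they impose on $E_L,E_R$) is accurate and is precisely what the paper leaves implicit.
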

\smallskip
\begin{remarkaftertheorem}
By homogeneity of the $E_{n,k}$ and the reflection
property~(\ref{eq:reflectionprop}), the expansions
(\ref{eq:2main1a}),(\ref{eq:2main1b}) are equivalent, as one sees by
replacing the summation index~$k$ by~$n-\nobreak k$; and so are
(\ref{eq:2main2a}),(\ref{eq:2main2b}).
\end{remarkaftertheorem}
\begin{proof}
  Restrict each of (\ref{eq:secondmain2a}), (\ref{eq:secondmain2b}),
  (\ref{eq:secondmain2c}), (\ref{eq:secondmain2d}) by requiring
  $L,R,\allowbreak L',R'\in\mathbb{N}$, and also left- and
  right-multiply by~$(x^{E_L})^n$, $(x^{E_R})^n$, where $E_L,E_R$ are
  chosen to make all powers of~$x$ non-negative integers; and 
  substitute $\adag, a$ for~$x,D$.
\end{proof}

The identities of Theorem~\ref{thm:powerful2}, being parametric, yield
many individual operator ordering identities in~$W_{\mathbb{C}}$.
Their coefficients $E_{n,k}$ can be computed from the summation
formula~(\ref{eq:Eform}) in Theorem~\ref{thm:altdef1} or the
triangular recurrence~(\ref{eq:SGKPrec}) in
Theorem~\ref{thm:recurrences}.  But by using the closed-form
expressions for certain $E_{n,k}$ in
Section~\ref{sec:eulerianformulas}, one can make these identities more
explicit, perhaps facilitating combinatorial interpretations.  The
following brief discussion focuses on the first
identity~(\ref{eq:2main1a}).

Typical examples of an expansion of a word $\Omega^n$ in twistings of
a word~$\Omega'{}^n$ include the following.  Setting $(L,\nobreak
R;\allowbreak L',\nobreak R')$ in~(\ref{eq:2main1a}) equal to
$(1,\nobreak 0;\allowbreak 0,\nobreak 0)$ and $(0,\nobreak
1;\allowbreak 0,\nobreak 0)$ respectively yields the identities
(\ref{eq:euleriank1}),(\ref{eq:euleriank2}) that appeared in the
introduction.  More generally, when $(L,R)\neq(0,0)$, setting
$(L',R')=(0,0)$, so that $e'=-1$ and $E_L=E_R=0$, yields
\begin{equation}
  n!\,\left(\adag^La\adag^R\right)^n = {\left(\adag^e\right)}^n
\sum_{k=0}^n E_{n,k}(1-L-R,1;\,R)\, \adag^k (a)^n \adag^{(n-k)},
\end{equation}
where $e=L+R-1$.  This representation of any
$\Omega^n=\left(\adag^La\adag^R\right)^n$ with $(L,R)\neq(0,0)$ as a
combination of twistings of ${\Omega'}^n = a^n$ subsumes
(\ref{eq:euleriank1}),(\ref{eq:euleriank2}).

Another example, with explicit coefficients, is
\begin{equation}
\label{eq:last}
  (\adag a \adag)^n \adag^n = \sum_{k=0}^n (-1)^{n-k} \binom{n+1}k
  \adag^{(n-k)} (\adag^2 a)^n \adag^k.
\end{equation}
Here, $\Omega^n= (\adag a \adag)^n$ and $\Omega'{}^n= (\adag^2 a)^n$.
This is the case $(L,R;L',R')=(1,1;2,0)$ of~(\ref{eq:2main1a}), with
$e=e'=1$ and $E_L=E_R=1$, and the expression used for
$E_{n,k}(-1,-1;\allowbreak1)$ comes from formula~(i) of
Theorem~\ref{thm:Eformulas}.

An additional example of the expansion of a word~$\Omega^n$ in
twistings of a word~$\Omega'{}^n$ appeared in
eq.~(\ref{eq:sampleeulerian}) of the introduction.  There, $\Omega^n=
(\adag^2 a)^n$ and $\Omega'{}^n = (\adag^3 a)^n$.
Equation~(\ref{eq:sampleeulerian}) is the case $(L,R;\allowbreak
L',R') = (2,0;\allowbreak3,0)$ of~(\ref{eq:2main1a}), with $e=1$,
$e'=2$, and $E_L=E_R=2$.  The expression for
$E_{n,k}(-1,-2;0)=(-1)^{n} E_{n,k}(1,2;0)$ incorporated
in~(\ref{eq:sampleeulerian}) comes from formula~(v) of
Theorem~\ref{thm:Eformulas}, with the aid of the reflection
identity~(\ref{eq:reflectionprop}).

A final observation is that if a word $\Omega^n=({a^\dag}^L a
{\adag}^R)^n$ or $\Omega'{}^n=({a^\dag}^{L'} a {\adag}^{R'})^n$ in the
Eulerian ordering identities of Theorem~\ref{thm:powerful2} is of the
form $({a^\dag} a {\adag})^n$, it can be rewritten by the
Tait--Toscano--Viskov identity as $\adag^n a^n \adag^n$.  An~instance
occurs in~(\ref{eq:last}), and in consequence, its left-hand side
equals $\adag^n a^n \adag^{2n}$.  A~similar observation applies to the
words $\Omega^n=({a^\dag}^L a {\adag}^R)^n$ and
$\Omega'{}^k=({a^\dag}^{L'}a{\adag}^{R'})^k$ in the Stirling-kind
ordering identities of Theorem~\ref{thm:powerful}.

\section*{Acknowledgements / Author declarations}

This research did not receive any
specific grant from funding agencies in the public, commercial, or
not-for-profit sectors.  Declarations of interest: none.






\end{document}